\newcommand*\pgfdeclareanchoralias[3]{%
  \expandafter\def\csname pgf@anchor@#1@#3\expandafter\endcsname
     \expandafter{\csname pgf@anchor@#1@#2\endcsname}}
\tikzset{
circnode/.style={
  circle, draw=red, very thin, outer sep=0.025em, minimum size=2em,
  fill=red, text centered},
integral/.style={
  regular polygon, regular polygon sides=3, shape border rotate=180, draw=black, very thick,
  outer sep=0.025em, inner sep=0, minimum size=2em, fill=blue!5, text centered},
multiply/.style={
  regular polygon, regular polygon sides=3, shape border rotate=180, draw=black, very thick,
  outer sep=0.025em, inner sep=0, minimum size=2em, fill=blue!5, text centered},
upmultiply/.style={
  regular polygon, regular polygon sides=3, draw=black, very thick,
  outer sep=0.025em, inner sep=0, minimum size=2em, fill=blue!5, text centered},
zero/.style={
  circle, draw=black, very thick, minimum size=0.15cm, fill=black,
  inner sep=0, outer sep=0},
hole/.style={
  circle, draw=white, very thick, minimum size=0.25cm, fill=white,
  inner sep=0, outer sep=0},
bang/.style={
  circle, draw=black, very thick, minimum size=0.15cm, fill=green!10,
  inner sep=0, outer sep=0},
delta/.style={
  regular polygon, regular polygon sides=3, minimum size=0.4cm, inner
  sep=0, outer sep=0.025em, draw=black, very thick, fill=green!10},
codelta/.style={
  regular polygon, regular polygon sides=3, shape border rotate=180, minimum size=0.4cm,
  inner sep=0, outer sep=0.025em, draw=black, very thick, fill=green!10},
plus/.style={
  regular polygon, regular polygon sides=3, shape border rotate=180, minimum size=0.4cm,
  inner sep = 0, outer sep=0.025em, draw=black, very thick, fill=black},
coplus/.style={
  regular polygon, regular polygon sides=3, minimum size=0.4cm,
  inner sep = 0, outer sep=0.025em, draw=black, very thick, fill=black},
sqnode/.style={
  regular polygon, regular polygon sides=4, minimum size=2.6em,
  draw=black, very thick, inner sep=0.2em, outer sep=0.025em,
  fill=yellow!10, text centered},
blackbox/.style={
  regular polygon, regular polygon sides=4, minimum size=2.6em,
  draw=black, very thick, inner sep=0.2em, outer sep=0.025em, fill=black},
bigcirc/.style={
  circle, draw=black, very thick, text width=1.6em, outer sep=0.025em,
  minimum height=1.6em, fill=blue!5, text centered}
 }
\definecolor{myurlcolor}{rgb}{0.6,0,0}
\definecolor{mycitecolor}{rgb}{0,0,0.8}
\definecolor{myrefcolor}{rgb}{0,0,0.8}
\newcommand{\Cob}{\mathrm{Cob}}
\newcommand{\Hilb}{\mathrm{Hilb}}
\newcommand{\Fin}{\mathrm{Fin}}
\newcommand{\R}{{\mathbb R}}
\newcommand{\C}{{\mathbb C}}
\newcommand{\N}{{\mathbb N}}
\newcommand{\B}{{\mathbb B}}
\newcommand{\Z}{{\mathbb Z}}
\newcommand{\maps}{\colon} 
\newcommand{\iso}{\cong}
\newcommand{\asrelto}{\nrightarrow}
\newcommand{\of}{\circ}
\newcommand{\ev}{\mathrm{ev}}
\newcommand{\End}{\mathrm{End}}
\newcommand{\Relk}{\mathrm{FinRel}_k}
\newcommand{\Vectk}{\mathrm{FinVect}_k}
\newcommand{\Mat}{\mathrm{Mat}}
\newcommand{\SV}{{\mathbb {SV}}}
\newtheorem{thm}{Theorem}
\newtheorem{lemma}[thm]{Lemma}
\newcommand{\Define}[1]{{\bf \boldmath{#1}}}
\title{Categories in Control }
\author{John C.~Baez}
\address{Department of Mathematics\\ 
University of California\\ 
Riverside CA 92521\\
USA \\
and Centre for Quantum Technologies\\ 
National University of Singapore\\ 
Singapore 117543}
\email{baez@math.ucr.edu}
\author{Jason Erbele}
\address{Department of Mathematics\\ 
University of California\\ 
Riverside CA 92521\\
USA}
\email{erbele@math.ucr.edu}
\date{May 27, 2014 (revised May 20, 2015)}
\begin{document}

\begin{abstract}
Control theory uses `signal-flow diagrams' to describe processes where real-valued functions of 
time are added, multiplied by scalars, differentiated and integrated, duplicated and deleted.  
These diagrams can be seen as string diagrams for the symmetric monoidal category \(\Vectk\) of 
finite-dimensional vector spaces over the field of rational functions \(k = \R(s)\), where the 
variable \(s\) acts as differentiation and the monoidal structure is direct sum rather than the 
usual tensor product of vector spaces.  For any field \(k\) we give a presentation of \(\Vectk\) in 
terms of the generators used in signal-flow diagrams.  A broader class of signal-flow diagrams also 
includes `caps' and `cups' to model feedback.  We show these diagrams can be seen as string 
diagrams for the symmetric monoidal category \(\Relk\), where objects are still finite-dimensional 
vector spaces but the morphisms are linear relations.  We also give a presentation for \(\Relk\).  
The relations say, among other things, that the 1-dimensional vector space \(k\) has two special 
commutative \(\dagger\)-Frobenius structures, such that the multiplication and unit of either one 
and the comultiplication and counit of the other fit together to form a bimonoid.  This sort of 
structure, but with tensor product replacing direct sum, is familiar from the `ZX-calculus' obeyed 
by a finite-dimensional Hilbert space with two mutually unbiased bases.
\end{abstract}

\maketitle

\section{Introduction}
\label{intro}

Control theory is the branch of engineering that focuses on manipulating `open systems'---systems
with inputs and outputs---to achieve desired goals.  In control theory, `signal-flow diagrams' are
used to describe linear ways of manipulating signals, which we will take here to be smooth
real-valued functions of time \cite{Friedland}.  For a category theorist, at least, it is natural to
treat signal-flow diagrams as string diagrams in a symmetric monoidal category \cite{JS1,JS2}.  This
forces some small changes of perspective, which we discuss below, but more important is the
question: \emph{which symmetric monoidal category?}   

We shall argue that the answer is: the category \(\Relk\) of finite-dimensional vector spaces over a
certain field \(k\), but with \emph{linear relations} rather than linear maps as morphisms, and
\emph{direct sum} rather than tensor product providing the symmetric monoidal structure.  We use the
field \(k = \R(s)\) consisting of rational functions in one real variable \(s\).  This variable has 
the meaning of differentation.  A linear relation from \(k^m\) to \(k^n\) is thus a system of linear
constant-coefficient ordinary differential equations relating \(m\) `input' signals and \(n\)
`output' signals.

Our main goal is to provide a complete `generators and relations' picture of this symmetric monoidal
category, with the generators being familiar components of signal-flow diagrams.  It turns out that
the answer has an intriguing but mysterious connection to ideas that are familiar in the
diagrammatic approach to quantum theory.   Quantum theory also involves linear algebra, but it uses
linear maps between Hilbert spaces as morphisms, and the tensor product of Hilbert spaces provides
the symmetric monoidal structure.

We hope that the category-theoretic viewpoint on signal-flow diagrams will shed new light on control
theory.  However, in this paper we only lay the groundwork.  In Section \ref{sigflow} we introduce 
signal-flow diagrams and summarize our main results.  In Section \ref{finvect} we use 
signal-flow diagrams to give a presentation of \(\Vectk\), the symmetric monoidal
category of finite-dimensional vector spaces and linear maps.    In Section \ref{finrel} we use them
to give a presentation of \(\Relk\).  In Section \ref{example} we discuss a well-known example from 
control theory: an inverted pendulum on a cart.   Finally, in Section \ref{conclusions} we compare 
our results to subsequent work of Bonchi--Soboci\'nski--Zanasi \cite{BSZ1,BSZ2} and Wadsley--Woods 
\cite{WW}.

\section{Signal-flow diagrams}
\label{sigflow}

There are several basic operations that one wants to perform when manipulating signals.  The
simplest is multiplying a signal by a scalar.  A signal can be amplified by a constant factor:  
\[  f \mapsto cf  \]
where \(c \in \R\).  We can write this as a string diagram:
  \begin{center}
   \begin{tikzpicture}[thick]
   \node[coordinate] (in) [label=\(f\)] {};
   \node [multiply] (mult) [below of=in] {\(c\)};
   \node[coordinate] (out) [below of=mult, label={[shift={(0,-0.8)}]\(cf\)}] {};

   \draw (in) -- (mult) -- (out);
   \end{tikzpicture}
\end{center}
Here the labels \(f\) and \(cf\) on top and bottom are just for explanatory purposes and not really
part of the diagram.  Control theorists often draw arrows on the wires, but this is unnecessary from
the string diagram perspective.  Arrows on wires are useful to distinguish objects from their
duals, but ultimately we will obtain a compact closed category where each object is its own dual, so
the arrows can be dropped.  What we really need is for the box denoting scalar multiplication to
have a clearly defined input and output.  This is why we draw it as a triangle.  Control theorists
often use a rectangle or circle, using arrows on wires to indicate which carries the input \(f\) and
which the output \(c f\).

A signal can also be integrated with respect to the time variable:
\[ f \mapsto \int f .\]
Mathematicians typically take differentiation as fundamental, but engineers sometimes prefer 
integration, because it is more robust against small perturbations.  In the end it will not matter 
much here.  We can again draw integration as a string diagram:
\begin{center}
   \begin{tikzpicture}[thick]
   \node[coordinate] (in) [label=\(f\)] {};
   \node [integral] (int) [below of=in] {\(\int\)};
   \node[coordinate] (out) [below of=int, label={[shift={(0.0,-0.9)}]\(\int f\)}] {};

   \draw (in) -- (int) -- (out);
   \end{tikzpicture}
  \end{center}
Since this looks like the diagram for scalar multiplication, it is natural to extend \(\R\) to
\(\R(s)\), the field of rational functions of a variable \(s\) which stands for differentiation.
Then differentiation becomes a special case of scalar multiplication, namely multiplication by
\(s\), and integration becomes multiplication by \(1/s\).  Engineers accomplish the same effect with
Laplace transforms, since differentiating a signal \(f\) is equivalent to multiplying its Laplace
transform 
\[   (\mathcal{L}f)(s) = \int_0^\infty f(t) e^{-st} \,dt  \]
by the variable \(s\).   Another option is to use the Fourier transform: differentiating \(f\) is
equivalent to multiplying its Fourier transform 
\[   (\mathcal{F}f)(\omega) = \int_{-\infty}^\infty f(t) e^{-i\omega t}\, dt  \]
by \(-i\omega\).   Of course, the function \(f\) needs to be sufficiently well-behaved to justify
calculations involving its Laplace or Fourier transform.  At a more basic level, it also requires
some work to treat integration as the two-sided inverse of differentiation.  Engineers do this by
considering signals that vanish for \(t < 0\), and choosing the antiderivative that vanishes under
the same condition.  Luckily all these issues can be side-stepped in a formal treatment of
signal-flow diagrams: we can simply treat signals as living in an unspecified vector space over the
field \(\R(s)\).  The field \(\C(s)\) would work just as well, and control theory relies heavily on
complex analysis.  In most of this paper we work over an arbitrary field \(k\).

The simplest possible signal processor is a rock, which takes the `input' given by the force \(F\)
on the rock and produces as `output' the rock's position \(q\).  Thanks to Newton's second law
\(F=ma\), we can describe this using a signal-flow diagram:
  \begin{center}
   \scalebox{0.8}{
   \begin{tikzpicture}[thick]
   \node[coordinate] (q) [label={[shift={(0,-0.6)}]\(q\)}] {};
   \node [integral] (diff) [above of=q] {\(\int\)};
   \node (v) [above of=diff, label={[shift={(0.4,-0.5)}]\(v\)}] {};
   \node [integral] (dot) [above of=v] {\(\int\)};
   \node (a) [above of=dot, label={[shift={(0.4,-0.5)}]\(a\)}] {};
   \node [multiply] (m) [above of=a] {\(\frac{1}{m}\)};
   \node[coordinate] (F) [above of=m, label={[shift={(0,0)}]\(F\)}] {};

   \draw (F) -- (m) -- (dot) -- (diff) -- (q);
   \end{tikzpicture}
}
  \end{center}
Here composition of morphisms is drawn in the usual way, by attaching the output wire of one
morphism to the input wire of the next.

To build more interesting machines we need more building blocks, such as addition:
\[ + \maps (f,g) \mapsto f + g   \]
and duplication:
\[ \Delta \maps  f \mapsto (f,f)  \]
When these linear maps are written as matrices, their matrices are transposes of each other.  This
is reflected in the string diagrams for addition and duplication:
  \begin{center}
   \begin{tikzpicture}[thick]
   \node[plus] (adder) {};
   \node (f) at (-0.5,1.35) {\(f\)};
   \node (g) at (0.5,1.35) {\(g\)};
   \node (out) [below of=adder] {\(f+g\)};

   \draw (f) .. controls +(-90:0.6) and +(120:0.6) .. (adder.left in);
   \draw (g) .. controls +(-90:0.6) and +(60:0.6) .. (adder.right in);
   \draw (adder) -- (out);
   \end{tikzpicture}
     \hspace{2cm}
   \begin{tikzpicture}[thick]
   \node[delta] (dupe){};
   \node (o1) at (-0.5,-1.35) {\(f\)};
   \node (o2) at (0.5,-1.35) {\(f\)};
   \node (in) [above of=dupe] {\(f\)};

   \draw (o1) .. controls +(90:0.6) and +(-120:0.6) .. (dupe.left out);
   \draw (o2) .. controls +(90:0.6) and +(-60:0.6) .. (dupe.right out);
   \draw (in) -- (dupe);
   \end{tikzpicture}
  \end{center}
The second is essentially an upside-down version of the first.  However, we draw addition as a dark
triangle and duplication as a light one because we will later want another way to `turn addition
upside-down' that does \emph{not} give duplication.  As an added bonus, a light upside-down triangle
resembles the Greek letter \(\Delta\), the usual symbol for duplication.  

While they are typically not considered worthy of mention in control theory, for completeness we
must include two other building blocks.  One is the zero map from \(\{0\}\) to our field \(k\),
which we denote as \(0\) and draw as follows:
  \begin{center}
  \begin{tikzpicture}[thick]
   \node (out1) {\(0\)};
   \node [zero] (ins1) at (0,1) {};

   \draw (out1) -- (ins1);
   \end{tikzpicture}
  \end{center}
The other is the zero map from \(k\) to \(\{0\}\), sometimes called `deletion', which we denote as
\(!\) and draw thus:
  \begin{center}
   \begin{tikzpicture}[thick]
   \node (in1) {\(f\)};
   \node [bang] (del1) at (0,-1) {};

   \draw (in1) -- (del1);
   \end{tikzpicture}
  \end{center}

Just as the matrices for addition and duplication are transposes of each other, so are the matrices
for zero and deletion, though they are rather degenerate, being \(1 \times 0\) and \(0 \times 1\)
matrices, respectively.  Addition and zero make \(k\) into a commutative monoid, meaning that the
following relations hold:
 \begin{center}
    \scalebox{0.80}{
   \begin{tikzpicture}[-, thick, node distance=0.74cm]
   \node [plus] (summer) {};
   \node [coordinate] (sum) [below of=summer] {};
   \node [coordinate] (Lsum) [above left of=summer] {};
   \node [zero] (insert) [above of=Lsum, shift={(0,-0.35)}] {};
   \node [coordinate] (Rsum) [above right of=summer] {};
   \node [coordinate] (sumin) [above of=Rsum] {};
   \node (equal) [right of=Rsum, shift={(0,-0.26)}] {\(=\)};
   \node [coordinate] (in) [right of=equal, shift={(0,1)}] {};
   \node [coordinate] (out) [right of=equal, shift={(0,-1)}] {};

   \draw (insert) .. controls +(270:0.3) and +(120:0.3) .. (summer.left in)
         (summer.right in) .. controls +(60:0.6) and +(270:0.6) .. (sumin)
         (summer) -- (sum)    (in) -- (out);
   \end{tikzpicture}
        \hspace{1.0cm}
   \begin{tikzpicture}[-, thick, node distance=0.7cm]
   \node [plus] (uradder) {};
   \node [plus] (adder) [below of=uradder, shift={(-0.35,0)}] {};
   \node [coordinate] (urm) [above of=uradder, shift={(-0.35,0)}] {};
   \node [coordinate] (urr) [above of=uradder, shift={(0.35,0)}] {};
   \node [coordinate] (left) [left of=urm] {};

   \draw (adder.right in) .. controls +(60:0.2) and +(270:0.1) .. (uradder.io)
         (uradder.right in) .. controls +(60:0.35) and +(270:0.3) .. (urr)
         (uradder.left in) .. controls +(120:0.35) and +(270:0.3) .. (urm)
         (adder.left in) .. controls +(120:0.75) and +(270:0.75) .. (left)
         (adder.io) -- +(270:0.5);

   \node (eq) [right of=uradder, shift={(0,-0.25)}] {\(=\)};

   \node [plus] (ulsummer) [right of=eq, shift={(0,0.25)}] {};
   \node [plus] (summer) [below of=ulsummer, shift={(0.35,0)}] {};
   \node [coordinate] (ulm) [above of=ulsummer, shift={(0.35,0)}] {};
   \node [coordinate] (ull) [above of=ulsummer, shift={(-0.35,0)}] {};
   \node [coordinate] (right) [right of=ulm] {};

   \draw (summer.left in) .. controls +(120:0.2) and +(270:0.1) .. (ulsummer.io)
         (ulsummer.left in) .. controls +(120:0.35) and +(270:0.3) .. (ull)
         (ulsummer.right in) .. controls +(60:0.35) and +(270:0.3) .. (ulm)
         (summer.right in) .. controls +(60:0.75) and +(270:0.75) .. (right)
         (summer.io) -- +(270:0.5);
   \end{tikzpicture}
        \hspace{1.0cm}
   \begin{tikzpicture}[-, thick, node distance=0.7cm]
   \node [plus] (twadder) {};
   \node [coordinate] (twout) [below of=twadder] {};
   \node [coordinate] (twR) [above right of=twadder, shift={(-0.2,0)}] {};
   \node (cross) [above of=twadder] {};
   \node [coordinate] (twRIn) [above left of=cross, shift={(0,0.3)}] {};
   \node [coordinate] (twLIn) [above right of=cross, shift={(0,0.3)}] {};

   \draw (twadder.right in) .. controls +(60:0.35) and +(-45:0.25) .. (cross)
                            .. controls +(135:0.2) and +(270:0.4) .. (twRIn);
   \draw (twadder.left in) .. controls +(120:0.35) and +(-135:0.25) .. (cross.center)
                           .. controls +(45:0.2) and +(270:0.4) .. (twLIn);
   \draw (twout) -- (twadder);

   \node (eq) [right of=twR] {\(=\)};

   \node [coordinate] (L) [right of=eq] {};
   \node [plus] (adder) [below right of=L] {};
   \node [coordinate] (out) [below of=adder] {};
   \node [coordinate] (R) [above right of=adder] {};
   \node (cross) [above left of=R] {};
   \node [coordinate] (LIn) [above left of=cross] {};
   \node [coordinate] (RIn) [above right of=cross] {};

   \draw (adder.left in) .. controls +(120:0.7) and +(270:0.7) .. (LIn)
         (adder.right in) .. controls +(60:0.7) and +(270:0.7) .. (RIn)
         (out) -- (adder);
   \end{tikzpicture}
    }
\end{center}
The equation at right is the commutative law, and the crossing of strands is the `braiding'
\[       B \maps (f,g) \mapsto (g,f)  \]
by which we switch two signals.   In fact this braiding is a `symmetry', so it does not matter which
strand goes over which:
 \begin{center}
   \begin{tikzpicture}[thick, node distance=0.5cm]
   \node (fstart) {\(f\)};
   \node [coordinate] (ftop) [below of=fstart] {};
   \node (center) [below right of=ftop] {};
   \node [coordinate] (fout) [below right of=center] {};
   \node (fend) [below of=fout] {\(f\)};
   \node [coordinate] (gtop) [above right of=center] {};
   \node (gstart) [above of=gtop] {\(g\)};
   \node [coordinate] (gout) [below left of=center] {};
   \node (gend) [below of=gout] {\(g\)};

   \draw [rounded corners] (fstart) -- (ftop) -- (center) --
   (fout) -- (fend) (gstart) -- (gtop) -- (gout) -- (gend);
   \end{tikzpicture}
\hspace{1em} 
\raisebox{3em}{=} 
\hspace{1em}
   \begin{tikzpicture}[thick, node distance=0.5cm]
   \node (fstart) {\(f\)};
   \node [coordinate] (ftop) [below of=fstart] {};
   \node (center) [below right of=ftop] {};
   \node [coordinate] (fout) [below right of=center] {};
   \node (fend) [below of=fout] {\(f\)};
   \node [coordinate] (gtop) [above right of=center] {};
   \node (gstart) [above of=gtop] {\(g\)};
   \node [coordinate] (gout) [below left of=center] {};
   \node (gend) [below of=gout] {\(g\)};

   \draw [rounded corners] (fstart) -- (ftop) -- (fout) -- (fend)
   (gstart) -- (gtop) -- (center) -- (gout) -- (gend);
   \end{tikzpicture}
  \end{center}

Dually, duplication and deletion make \(k\) into a cocommutative comonoid.  This means that if we
reflect the equations obeyed by addition and zero across the horizontal axis and turn dark
operations into light ones, we obtain another set of valid equations:
\begin{center}
    \scalebox{0.80}{
   \begin{tikzpicture}[-, thick, node distance=0.74cm]
   \node [delta] (dupe) {};
   \node [coordinate] (top) [above of=dupe] {};
   \node [coordinate] (Ldub) [below left of=dupe] {};
   \node [bang] (delete) [below of=Ldub, shift={(0,0.35)}] {};
   \node [coordinate] (Rdub) [below right of=dupe] {};
   \node [coordinate] (dubout) [below of=Rdub] {};
   \node (equal) [right of=Rdub, shift={(0,0.26)}] {\(=\)};
   \node [coordinate] (in) [right of=equal, shift={(0,1)}] {};
   \node [coordinate] (out) [right of=equal, shift={(0,-1)}] {};

   \draw (delete) .. controls +(90:0.3) and +(240:0.3) .. (dupe.left out)
         (dupe.right out) .. controls +(300:0.6) and +(90:0.6) .. (dubout)
         (dupe) -- (top)    (in) -- (out);
   \end{tikzpicture}
       \hspace{1.0cm}
   \begin{tikzpicture}[-, thick, node distance=0.7cm]
   \node [delta] (lrduper) {};
   \node [delta] (duper) [above of=lrduper, shift={(-0.35,0)}] {};
   \node [coordinate](lrm) [below of=lrduper, shift={(-0.35,0)}] {};
   \node [coordinate](lrr) [below of=lrduper, shift={(0.35,0)}] {};
   \node [coordinate](left) [left of=lrm] {};

   \draw (duper.right out) .. controls +(300:0.2) and +(90:0.1) .. (lrduper.io)
         (lrduper.right out) .. controls +(300:0.35) and +(90:0.3) .. (lrr)
         (lrduper.left out) .. controls +(240:0.35) and +(90:0.3) .. (lrm)
         (duper.left out) .. controls +(240:0.75) and +(90:0.75) .. (left)
         (duper.io) -- +(90:0.5);

   \node (eq) [right of=lrduper, shift={(0,0.25)}] {\(=\)};

   \node [delta] (lldubber) [right of=eq, shift={(0,-0.25)}] {};
   \node [delta] (dubber) [above of=lldubber, shift={(0.35,0)}] {};
   \node [coordinate] (llm) [below of=lldubber, shift={(0.35,0)}] {};
   \node [coordinate] (lll) [below of=lldubber, shift={(-0.35,0)}] {};
   \node [coordinate] (right) [right of=llm] {};

   \draw (dubber.left out) .. controls +(240:0.2) and +(90:0.1) .. (lldubber.io)
         (lldubber.left out) .. controls +(240:0.35) and +(90:0.3) .. (lll)
         (lldubber.right out) .. controls +(300:0.35) and +(90:0.3) .. (llm)
         (dubber.right out) .. controls +(300:0.75) and +(90:0.75) .. (right)
         (dubber.io) -- +(90:0.5);
   \end{tikzpicture}
       \hspace{1.0cm}
   \begin{tikzpicture}[-, thick, node distance=0.7cm]
   \node [coordinate] (twtop) {};
   \node [delta] (twdupe) [below of=twtop] {};
   \node [coordinate] (twR) [below right of=twdupe, shift={(-0.2,0)}] {};
   \node (cross) [below of=twdupe] {};
   \node [coordinate] (twROut) [below left of=cross, shift={(0,-0.3)}] {};
   \node [coordinate] (twLOut) [below right of=cross, shift={(0,-0.3)}] {};

   \draw (twdupe.left out) .. controls +(240:0.35) and +(135:0.25) .. (cross)
                           .. controls +(-45:0.2) and +(90:0.4) .. (twLOut)
         (twdupe.right out) .. controls +(300:0.35) and +(45:0.25) .. (cross.center)
                            .. controls +(-135:0.2) and +(90:0.4) .. (twROut)
         (twtop) -- (twdupe);

   \node (eq) [right of=twR] {\(=\)};

   \node [coordinate] (L) [right of=eq] {};
   \node [delta] (dupe) [above right of=L] {};
   \node [coordinate] (top) [above of=dupe] {};
   \node [coordinate] (R) [below right of=dupe] {};
   \node (uncross) [below left of=R] {};
   \node [coordinate] (LOut) [below left of=uncross] {};
   \node [coordinate] (ROut) [below right of=uncross] {};

   \draw (dupe.left out) .. controls +(240:0.7) and +(90:0.7) .. (LOut)
         (dupe.right out) .. controls +(300:0.7) and +(90:0.7) .. (ROut)
         (top) -- (dupe);
   \end{tikzpicture}
    }
\end{center}
There are also relations between the monoid and comonoid operations.  For example, adding two
signals and then duplicating the result gives the same output as duplicating each signal and then
adding the results:
  \begin{center}
   \begin{tikzpicture}[thick]
   \node[plus] (adder) {};
   \node [coordinate] (f) [above of=adder, shift={(-0.4,-0.325)}, label={\(f\)}] {\(f\)};
   \node [coordinate] (g) [above of=adder, shift={(0.4,-0.325)}, label={\(g\)}] {\(g\)};
   \node[delta] (dupe) [below of=adder, shift={(0,0.25)}] {};
   \node [coordinate] (outL) [below of=dupe, shift={(-0.4,0.325)}, label={[shift={(-0.2,-0.6)}]\(f+g\)}] {};
   \node [coordinate] (outR) [below of=dupe, shift={(0.4,0.325)}, label={[shift={(0.15,-0.6)}]\(f+g\)}] {};

   \draw (adder.io) -- (dupe.io)
         (f) .. controls +(270:0.4) and +(120:0.25) .. (adder.left in)
         (adder.right in) .. controls +(60:0.25) and +(270:0.4) .. (g)
         (dupe.left out) .. controls +(240:0.25) and +(90:0.4) .. (outL)
         (dupe.right out) .. controls +(300:0.25) and +(90:0.4) .. (outR);
   \end{tikzpicture}
      \raisebox{4em}{=}
      \hspace{1em}
   \begin{tikzpicture}[-, thick, node distance=0.7cm]
   \node [plus] (addL) {};
   \node (cross) [above right of=addL, shift={(-0.1,-0.0435)}] {};
   \node [plus] (addR) [below right of=cross, shift={(-0.1,0.0435)}] {};
   \node [delta] (dupeL) [above left of=cross, shift={(0.1,-0.0435)}] {};
   \node [delta] (dupeR) [above right of=cross, shift={(-0.1,-0.0435)}] {};
   \node [coordinate] (f) [above of=dupeL, label={\(f\)}] {};
   \node [coordinate] (g) [above of=dupeR, label={\(g\)}] {};
   \node [coordinate] (sum1) [below of=addL, shift={(0,0.2)}, label={[shift={(-0.2,-0.6)}]\(f+g\)}] {};
   \node [coordinate] (sum2) [below of=addR, shift={(0,0.2)}, label={[shift={(0.15,-0.6)}]\(f+g\)}] {};

   \path
   (addL) edge (sum1) (addL.right in) edge (dupeR.left out) (addL.left in) edge [bend left=30] (dupeL.left out)
   (addR) edge (sum2) (addR.left in) edge (cross) (addR.right in) edge [bend right=30] (dupeR.right out)
   (dupeL) edge (f)
   (dupeL.right out) edge (cross)
   (dupeR) edge (g);
   \end{tikzpicture}
  \end{center} 
This diagram is familiar in the theory of Hopf algebras, or more generally bialgebras.  Here it is
an example of the fact that the monoid operations on \(k\) are comonoid homomorphisms---or
equivalently, the comonoid operations are monoid homomorphisms.  We summarize this situation by
saying that \(k\) is a \Define{bimonoid}.

So far all our string diagrams denote linear maps.  We can treat these as morphisms in the category
\( \Vectk \), where objects are finite-dimensional vector spaces over a field \(k\) and morphisms
are linear maps.  This category is equivalent to a skeleton where the only objects are vector spaces
\(k^n\) for \(n \ge 0\), and then morphisms can be seen as \(n \times m\) matrices.  The space of
signals is a vector space \(V\) over \(k\) which may not be finite-dimensional, but this does not
cause a problem: an \(n \times m\) matrix with entries in \(k\) still defines a linear map from
\(V^n\) to \(V^m\) in a functorial way.

In applications of string diagrams to quantum theory \cite{BS,CP}, we make \(\Vectk\) into a
symmetric monoidal category using the tensor product of vector spaces.  In control theory, we
instead make \(\Vectk\) into a symmetric monoidal category using the \emph{direct sum} of vector
spaces.  In Lemma~\ref{gensvk} we prove that for any field \(k\), \(\Vectk\) with direct sum
is generated as a symmetric monoidal category by the one object \(k\) together with these morphisms:
\begin{center}
\scalebox{1}{
 \begin{tikzpicture}[thick]
   \node[coordinate] (in) at (0,2) {};
   \node [multiply] (mult) at (0,1) {\(c\)};
   \node[coordinate] (out) at (0,0) {};
   \draw (in) -- (mult) -- (out);
   \end{tikzpicture}
\hspace{3 em}
\begin{tikzpicture}[thick]
   \node[plus] (adder) at (0,0.85) {};
   \node[coordinate] (f) at (-0.5,1.5) {}; 
   \node[coordinate] (g) at (0.5,1.5) {};
   \node[coordinate] (out) at (0,0) {};
   \node [coordinate] (pref) at (-0.5,2) {};
   \node [coordinate] (preg) at (0.5,2) {};
   \draw[rounded corners] (pref) -- (f) -- (adder.left in);
   \draw[rounded corners] (preg) -- (g) -- (adder.right in);
   \draw (adder) -- (out);
   \end{tikzpicture} 
\hspace{2em}
  \begin{tikzpicture}[thick]
   \node[delta] (dupe) at (0,1.15) {};
   \node[coordinate] (o1) at (-0.5,0.5) {};
   \node[coordinate] (o2) at (0.5,0.5) {};
   \node[coordinate] (in) at (0,2) {};
   \node [coordinate] (posto1) at (-0.5,0) {};
   \node [coordinate] (posto2) at (0.5,0) {};

   \draw[rounded corners] (posto1) -- (o1) -- (dupe.left out);
   \draw[rounded corners] (posto2) -- (o2) -- (dupe.right out);
   \draw (in) -- (dupe);
\end{tikzpicture}
\hspace{3em}
   \begin{tikzpicture}[thick]
   \node[hole] (heightHolder) at (0,2) {};
   \node [coordinate] (out) at (0,0) {};
   \node [zero] (del) at (0,1) {};
   \draw (del) -- (out);
   \end{tikzpicture}
\hspace{2em}
  \begin{tikzpicture}[thick]
   \node[coordinate] (in) at (0,2) {};
   \node [bang] (mult) at (0,1) {};
   \node [hole] (heightHolder) at (0,0) {};
   \draw (in) -- (mult);
   \end{tikzpicture}
}
\end{center}
where \(c \in k\) is arbitrary.  

However, these generating morphisms obey some unexpected relations!  For example, we have:
  \begin{center}
   \begin{tikzpicture}[-, thick, node distance=0.85cm]
   \node (UpUpLeft) at (-0.4,-0.1) {};
   \node [coordinate] (UpLeft) at (-0.4,-0.6) {};
   \node (mid) at (0,-1) {};
   \node [coordinate] (DownRight) at (0.4,-1.4) {};
   \node (DownDownRight) at (0.4,-1.9) {};
   \node [coordinate] (UpRight) at (0.4,-0.6) {};
   \node (UpUpRight) at (0.4,-0.1) {};
   \node [coordinate] (DownLeft) at (-0.4,-1.4) {};
   \node (DownDownLeft) at (-0.4,-1.9) {};

   \draw [rounded corners=2mm] (UpUpLeft) -- (UpLeft) -- (mid) --
   (DownRight) -- (DownDownRight) (UpUpRight) -- (UpRight) -- (DownLeft) -- (DownDownLeft);

\begin{scope}[font=\fontsize{20}{20}\selectfont]
   \node (equals) at (1.75,-1) {\scalebox{0.65}{\(=\)}};
\end{scope}

   \node [coordinate] (sum2L) at (3.5,0) {};
   \node [multiply] (neg) [above of=sum2L] {\(\scriptstyle{-1}\)};
   \node [coordinate] (dupe1L) [above of=neg] {};
   \node [delta] (dupe1) [above right of=dupe1L, shift={(-0.1,0)}] {};
   \node (in1) [above of=dupe1] {};
   \node [plus] (sum1) [below right of=dupe1] {};
   \node [coordinate] (sum1R) [above right of=sum1, shift={(-0.1,0)}] {};
   \node (in2) [above of=sum1R] {};
   \node [delta] (dupe2) [below of=sum1, shift={(0,-0.85)}] {};
   \node [plus] (sum2) [below right of=sum2L, shift={(-0.1,0)}] {};
   \node [coordinate] (dupe2R) [below right of=dupe2, shift={(0.4,-0.6)}] {};
   \node [delta] (dupe3) [below of=sum2] {};
   \node [coordinate] (dupe3L) [below left of=dupe3, shift={(0.1,0)}] {};
   \node [multiply] (neg1) [below right of=dupe3, shift={(-0.28,-0.5)}] {\(\scriptstyle{-1}\)};
   \node [plus] (sum3) [below right of=neg1, shift={(-0.2,-0.65)}] {};
   \node [coordinate] (sum3R) [above right of=sum3, shift={(0,0.2)}] {};
   \node (out2) [below of=sum3] {};
   \node (out1) [below of=dupe3L, shift={(0,-1.75)}] {};

   \draw (in1) -- (dupe1);
   \draw (dupe1.left out) .. controls +(240:0.5) and +(90:0.5) .. (neg.90);
   \draw (dupe1.right out) -- (sum1.left in);
   \draw (in2) .. controls +(270:0.5) and +(60:0.5) .. (sum1.right in);
   \draw (sum1) -- (dupe2);
   \draw (dupe2.right out) .. controls (dupe2R) and (sum3R) .. (sum3.right in);
   \draw (dupe2.left out) -- (sum2.right in);
   \draw (neg.io) .. controls +(270:0.3) and +(120:0.3) .. (sum2.left in);
   \draw (sum2) -- (dupe3);
   \draw (dupe3.left out) .. controls +(240:0.7) and +(90:1) .. (out1);
   \draw (dupe3.right out) .. controls +(300:0.3) and +(90:0.3) .. (neg1.90);
   \draw (neg1.io) .. controls +(270:0.2) and +(120:0.2) .. (sum3.left in);
   \draw (sum3) -- (out2);
   \end{tikzpicture}
  \end{center}
Thus, it is important to find a complete set of relations obeyed by these generating
morphisms, thus obtaining a presentation of \(\Vectk\) as a symmetric monoidal 
category.  We do this in Theorem~\ref{presvk}.  In brief, these relations say:
\begin{enumerate}
\item \( (k, +, 0, \Delta, !) \) is a bicommutative bimonoid;
\item the rig operations of \(k\) can be recovered from the generating morphisms;
\item all the generating morphisms commute with scalar multiplication.
\end{enumerate}
Here item (2) means that \(+\), \(\cdot\), \(0\) and \(1\) in the field \(k\) can be expressed in 
terms of signal-flow diagrams as follows:
\begin{center}
    \scalebox{0.80}{
   \begin{tikzpicture}[-, thick, node distance=0.85cm]
   \node (bctop) {};
   \node [multiply] (bc) [below of=bctop, shift={(0,-0.59)}] {\(\scriptstyle{b+c}\)};
   \node (bcbottom) [below of=bc, shift={(0,-0.59)}] {};

   \draw (bctop) -- (bc) -- (bcbottom);

   \node (eq) [right of=bc, shift={(0.15,0)}] {\(=\)};

   \node [multiply] (b) [right of=eq, shift={(0,0.1)}] {\(\scriptstyle{b}\)};
   \node [delta] (dupe) [above right of=b, shift={(-0.2,0.1)}] {};
   \node (top) [above of=dupe, shift={(0,-0.2)}] {};
   \node [multiply] (c) [below right of=dupe, shift={(-0.2,-0.1)}] {\(\scriptstyle{c}\)};
   \node [plus] (adder) [below right of=b, shift={(-0.2,-0.3)}] {};
   \node (out) [below of=adder, shift={(0,0.2)}] {};

   \draw
   (dupe.left out) .. controls +(240:0.15) and +(90:0.15) .. (b.90)
   (dupe.right out) .. controls +(300:0.15) and +(90:0.15) .. (c.90)
   (top) -- (dupe.io)
   (adder.io) -- (out)
   (adder.left in) .. controls +(120:0.15) and +(270:0.15) .. (b.io)
   (adder.right in) .. controls +(60:0.15) and +(270:0.15) .. (c.io);
   \end{tikzpicture}
        \hspace{0.8cm}
   \begin{tikzpicture}[-, thick]
   \node (top) {};
   \node [multiply] (c) [below of=top] {\(c\)};
   \node [multiply] (b) [below of=c] {\(b\)};
   \node (bottom) [below of=b] {};

   \draw (top) -- (c) -- (b) -- (bottom);

   \node (eq) [left of=b, shift={(0.2,0.5)}] {\(=\)};

   \node (bctop) [left of=top, shift={(-0.6,0)}] {};
   \node [multiply] (bc) [left of=eq, shift={(0.2,0)}] {\(bc\)};
   \node (bcbottom) [left of=bottom, shift={(-0.6,0)}] {};

   \draw (bctop) -- (bc) -- (bcbottom);
   \end{tikzpicture}
        \hspace{0.8cm}
\raisebox{2em}{
   \begin{tikzpicture}[-, thick, node distance=0.85cm]
   \node (top) {};
   \node [multiply] (one) [below of=top] {1};
   \node (bottom) [below of=one] {};

   \draw (top) -- (one) -- (bottom);

   \node (eq) [right of=one] {\(=\)};
   \node (topid) [right of=top, shift={(0.6,0)}] {};
   \node (botid) [right of=bottom, shift={(0.6,0)}] {};

   \draw (topid) -- (botid);
   \end{tikzpicture}
}
        \hspace{0.6cm}
\raisebox{2em}{
   \begin{tikzpicture}[-, thick, node distance=0.85cm]
   \node [multiply] (prod) {\(0\)};
   \node (in0) [above of=prod] {};
   \node (out0) [below of=prod] {};
   \node (eq) [right of=prod] {\(=\)};
   \node [bang] (del) [right of=eq, shift={(-0.2,0.2)}] {};
   \node [zero] (ins) [right of=eq, shift={(-0.2,-0.2)}] {};
   \node (in1) [above of=del, shift={(0,-0.2)}] {};
   \node (out1) [below of=ins, shift={(0,0.2)}] {};

   \draw (in0) -- (prod) -- (out0);
   \draw (in1) -- (del);
   \draw (ins) -- (out1);
   \end{tikzpicture}
    }
}
\end{center}
Multiplicative inverses cannot be so expressed, so our signal-flow diagrams so far do not know that
\(k\) is a field.  Additive inverses also cannot be expressed in this way.  And indeed, a version of Theorem~\ref{presvk} holds whenever \(k\) is a commutative rig: that is, a commutative `ring without negatives', such as \(\N\).   See Section \ref{conclusions} for details.

While Theorem~\ref{presvk} is a step towards understanding the category-theoretic underpinnings of
control theory, it does not treat signal-flow diagrams that include `feedback'.  Feedback is one of
the most fundamental concepts in control theory because a control system without feedback may be
highly sensitive to disturbances or unmodeled behavior.  Feedback allows these uncontrolled
behaviors to be mollified.  As a string diagram, a basic feedback system might look schematically
like this:
  \begin{center}
   \begin{tikzpicture}[thick]
   \node (in) {};
   \node [coordinate] (inplus) [below of=in, label={[shift={(2.2em,0)}]reference}] {};
   \node [plus] (plus) [below of=inplus, shift={(-0.7,0)}]{};
   \node [multiply] (controller) [below of=plus, label={[shift={(3.0em,-0.5)}]controller},
   label={[shift={(3.5em,0.15)}]measured error}, label={[shift={(3.1em,-1.5)}]system input}] {\(a\)};
   \node [multiply] (system) [below of=controller, shift={(0,-1)},
   label={[shift={(2.4em,-0.6)}]system}] {\(b\)};
   \node [delta] (split) [below of=system, label={[shift={(5.2em,-1.7)}]system output}] {};
   \node [coordinate] (outsplit) [below of=split, shift={(0.7,0)}] {};
   \node (out) [below of=outsplit] {};
   \node [coordinate] (rcup) [below of=split, shift={(-0.7,0)}] {};
   \node [coordinate] (lcup) [left of=rcup, shift={(0.4,0)}] {};
   \node [upmultiply] (sensor) [above of=lcup, shift={(0,0.5)}, label={[shift={(-2.2em,-0.5)}]sensor},
   label={[shift={(-4.1em,1)}]measured output}] {\(c\)};
   \node [upmultiply] (minus) [above of=sensor, shift={(0,1.8)}] {\(\scriptstyle{-1}\)};
   \node [coordinate] (rcap) [above of=plus, shift={(-0.7,0)}] {};
   \node [coordinate] (lcap) [left of=rcap, shift={(0.4,0)}] {};

   \draw[rounded corners=8pt] (in) -- (inplus) -- (plus.right in) (plus) --
   (controller) -- (system) -- (split) (split.right out) --
   (outsplit) -- (out) (split.left out) -- (rcup) -- (lcup) --
   (sensor) -- (minus) -- (lcap) -- (rcap) -- (plus.left in);
   \end{tikzpicture}
  \end{center}
The user inputs a `reference' signal, which is fed into a controller, whose output is fed into a
system, or `plant', which in turn produces its own output.  But then the system's output is
duplicated, and one copy is fed into a sensor, whose output is added (or if we prefer, subtracted)
from the reference signal.

In string diagrams---unlike in the usual thinking on control theory---it is essential to be 
able to read any diagram from top to bottom as a composite of tensor products of generating 
morphisms.  Thus, to incorporate the idea of feedback, we need two more generating morphisms.  
These are the `cup':
\begin{center}
   \begin{tikzpicture}[thick]
   \node (0) [label={[shift={(0,-1.6)}]}] {\(f=g\)};
   \node[coordinate] (3) [left of=0] {};
   \node[coordinate] (4) [right of=0] {};
   \node (1) [above of=3] {\(f\)};
   \node (2) [above of=4] {\(g\)};

   \path
   (1) edge (3)
   (2) edge (4)
   (3) edge [-, bend right=90] (4);
   \end{tikzpicture}
\end{center}
and `cap':
\begin{center}
   \begin{tikzpicture}[thick]
   \node (0) {\(f=g\)};
   \node[coordinate] (3) [left of=0] {};
   \node[coordinate] (4) [right of=0] {};
   \node (1) [below of=3] {\(f\)};
   \node (2) [below of=4] {\(g\)};

   \path
   (3) edge (1)
   (4) edge (2)
   (3) edge [bend left=90] (4);
   \end{tikzpicture}
\end{center}
These are not maps: they are relations.  The cup imposes the relation that its two inputs be equal,
while the cap does the same for its two outputs.  This is a way of describing how a signal flows
around a bend in a wire.

To make this precise, we use a category called \(\Relk\).  An object of this category is a
finite-dimensional vector space over \(k\), while a morphism from \(U\) to \(V\), denoted \(L \maps
U \asrelto V\), is a \Define{linear relation}, meaning a linear subspace
\[         L \subseteq U \oplus V  .\]
In particular, when \(k = \R(s)\), a linear relation \(L \maps k^m \to k^n\) is just an arbitrary 
system of constant-coefficient linear ordinary differential equations relating \(m\) input 
variables and \(n\) output variables.  

Since the direct sum \(U \oplus V\) is also the cartesian product of \(U\) and \(V\), a linear 
relation is indeed a relation in the usual sense, but with the property that if \(u \in U\) is 
related to \(v \in V\) and \(u' \in U\) is related to \(v' \in V\) then \(cu + c'u'\) is related to
\(cv + c'v'\) whenever \(c,c' \in k\).  We compose linear relations \(L \maps U \asrelto V\) and 
\(L' \maps V \asrelto W\) as follows:
\[         L'L = \{(u,w) \colon \; \exists\; v \in V \;\; (u,v) \in L \textrm{ and } 
(v,w) \in L'\} .\]
Any linear map \(f \maps U \to V\) gives a linear relation \(F \maps U \asrelto V\), namely the
graph of that map:
\[                  F = \{ (u,f(u)) : u \in U \}. \]
Composing linear maps thus becomes a special case of composing linear relations, so \(\Vectk\)
becomes a subcategory of \(\Relk\).  Furthermore, we can make \(\Relk\) into a monoidal category
using direct sums, and it becomes symmetric monoidal using the braiding already present in
\(\Vectk\).

In these terms, the \Define{cup} is the linear relation
\[                 \cup \maps k^2 \asrelto \{0\}   \]
given by
\[            \cup \; = \; \{ (x,x,0) : x \in k   \} \; \subseteq \; k^2 \oplus \{0\},   \]
while the \Define{cap} is the linear relation 
\[                 \cap \maps \{0\} \asrelto k^2  \]
given by
\[            \cap \; = \; \{ (0,x,x) : x \in k   \} \; \subseteq \; \{0\} \oplus k^2  .\]
These obey the \Define{zigzag relations}:
\begin{center}
   \begin{tikzpicture}[-, thick, node distance=1cm]
   \node (zigtop) {};
   \node [coordinate] (zigincup) [below of=zigtop] {};
   \node [coordinate] (zigcupcap) [right of=zigincup] {};
   \node [coordinate] (zigoutcap) [right of=zigcupcap] {};
   \node (zigbot) [below of=zigoutcap] {};
   \node (equal) [right of=zigoutcap] {\(=\)};
   \node (mid) [right of=equal] {};
   \node (vtop) [above of=mid] {};
   \node (vbot) [below of=mid] {};
   \node (equals) [right of=mid] {\(=\)};
   \node [coordinate] (zagoutcap) [right of=equals] {};
   \node (zagbot) [below of=zagoutcap] {};
   \node [coordinate] (zagcupcap) [right of=zagoutcap] {};
   \node [coordinate] (zagincup) [right of=zagcupcap] {};
   \node (zagtop) [above of=zagincup] {};
   \path
   (zigincup) edge (zigtop) edge [bend right=90] (zigcupcap)
   (zigoutcap) edge (zigbot) edge [bend right=90] (zigcupcap)
   (vtop) edge (vbot)
   (zagincup) edge (zagtop) edge [bend left=90] (zagcupcap)
   (zagoutcap) edge (zagbot) edge [bend left=90] (zagcupcap);
   \end{tikzpicture}
    \end{center}
Thus, they make \(\Relk\) into a compact closed category where \(k\), and thus every object, is its
own dual.  

Besides feedback, one of the things that make the cap and cup useful is that they allow any 
morphism \(L \maps U \asrelto V \) to be `plugged in backwards' and thus `turned around'.  For 
instance, turning around integration:
  \begin{center}
   \begin{tikzpicture}[thick]
   \node [integral] (dot) {\(\int\)};
   \node [coordinate] (cupout) [below of=dot, shift={(0,0.2)}] {};
   \node [coordinate] (cupin) [left of=cupout] {};
   \node [coordinate] (capin) [above of=dot, shift={(0,-0.4)}] {};
   \node [coordinate] (in) [left of=capin, shift={(0,0.5)}] {};
   \node [coordinate] (capout) [right of=capin] {};
   \node [coordinate] (out) [right of=cupout, shift={(0,-0.7)}] {};

   \draw (capin) -- (dot) -- (cupout);
   \path
   (in) edge (cupin)
   (capout) edge (out)
   (cupin) edge [bend right=90] (cupout)
   (capin) edge [bend left=90] (capout);

   \node (eq) [left of=dot, shift={(-1,0)}] {\(:=\)};
   \node [upmultiply] (diff) [left of=eq, shift={(-0.3,-0.2)}] {\(\int\)};
   \node [coordinate] (diffin) [above of=diff, shift={(0,0.3)}] {};
   \node [coordinate] (diffout) [below of=diff, shift={(0,-0.3)}] {};

   \draw (diffin) -- (diff) -- (diffout);
   \end{tikzpicture}
  \end{center}
we obtain differentiation.  In general, using caps and cups we can turn around any linear relation
\(L \maps U \asrelto V\) and obtain a linear relation \(L^\dagger \maps V \asrelto U\), called the
\Define{adjoint} of \(L\), which turns out to given by
\[            L^\dagger = \{(v,u) : (u,v) \in L \}  .\]
For example, if \(c \in k\) is nonzero, the adjoint of scalar multiplication by \(c\) is
multiplication by \(c^{-1}\):
 \begin{center}
   \begin{tikzpicture}[thick]
   \node [multiply] (c) {\(c\)};
   \node [coordinate] (cupout) [below of=c, shift={(0,0.4)}] {};
   \node [coordinate] (cupin) [left of=cupout] {};
   \node [coordinate] (capin) [above of=c, shift={(0,-0.5)}] {};
   \node [coordinate] (in) [left of=capin, shift={(0,0.7)}] {};
   \node [coordinate] (capout) [right of=capin] {};
   \node [coordinate] (out) [right of=cupout, shift={(0,-0.7)}] {};

   \draw (capin) -- (c) -- (cupout);
   \path
   (in) edge (cupin)
   (capout) edge (out)
   (cupin) edge [bend right=90] (cupout)
   (capin) edge [bend left=90] (capout);

   \node (eq) [right of=c, shift={(1.1,0)}] {\(=\)};

   \node [multiply] (mult) [right of=eq, shift={(0.5,0)}] {\(c^{-1}\!\!\)};
   \node [coordinate] (min) [above of=mult, shift={(0,0.2)}] {};
   \node [coordinate] (mout) [below of=mult, shift={(0,-0.3)}] {};

   \draw (min) -- (mult) -- (mout);

   \node (colon) [left of=c, shift={(-1.1,0)}] {\(:=\)};

   \node [upmultiply] (adj) [left of=colon, shift={(-0.25,0)}] {\(c\)};
   \node [coordinate] (adin) [above of=adj, shift={(0,0.2)}] {};
   \node [coordinate] (adout) [below of=adj, shift={(0,-0.3)}] {};

   \draw (adin) -- (adj) -- (adout);
   \end{tikzpicture}
  \end{center}
Thus, caps and cups allow us to express multiplicative inverses in terms of signal-flow diagrams!
One might think that a problem arises when when \(c = 0\), but no: the adjoint of scalar
multiplication by \(0\) is
\[          \{(0,x) : x \in k \} \subseteq k \oplus k .\]

In Lemma~\ref{gensrk} we show that \(\Relk\) is generated, as a symmetric monoidal category, by
these morphisms:
\begin{center}
\scalebox{0.9}{
 \begin{tikzpicture}[thick]
   \node [coordinate] (in) at (0,2) {};
   \node [multiply] (mult) at (0,1) {\(c\)};
   \node [coordinate] (out) at (0,0) {};
   \draw (in) -- (mult) -- (out);
   \end{tikzpicture}
\hspace{3 em}
\begin{tikzpicture}[thick]
   \node [plus] (adder) at (0,0.85) {};
   \node [coordinate] (f) at (-0.5,1.5) {}; 
   \node [coordinate] (g) at (0.5,1.5) {};
   \node [coordinate] (out) at (0,0) {};
   \node [coordinate] (pref) at (-0.5,2) {};
   \node [coordinate] (preg) at (0.5,2) {};

   \draw [rounded corners] (pref) -- (f) -- (adder.left in);
   \draw [rounded corners] (preg) -- (g) -- (adder.right in);
   \draw (adder) -- (out);
   \end{tikzpicture} 
\hspace{2em}
  \begin{tikzpicture}[thick]
   \node[delta] (dupe) at (0,1.15) {};
   \node[coordinate] (o1) at (-0.5,0.5) {};
   \node[coordinate] (o2) at (0.5,0.5) {};
   \node[coordinate] (in) at (0,2) {};
   \node [coordinate] (posto1) at (-0.5,0) {};
   \node [coordinate] (posto2) at (0.5,0) {};

   \draw[rounded corners] (posto1) -- (o1) -- (dupe.left out);
   \draw[rounded corners] (posto2) -- (o2) -- (dupe.right out);
   \draw (in) -- (dupe);
\end{tikzpicture}
\hspace{3em}
\begin{tikzpicture}[thick]
   \node[coordinate] (in) at (0,2) {};
   \node [bang] (mult) at (0,1) {};
   \node [hole] (heightHolder) at (0,0) {};
   \draw (in) -- (mult);
   \end{tikzpicture}
\hspace{2em}
  \begin{tikzpicture}[thick]
   \node[hole] (heightHolder) at (0,2) {};
   \node [coordinate] (out) at (0,0) {};
   \node [zero] (del) at (0,1) {};
   \draw (del) -- (out);
\end{tikzpicture}
\hspace{3em}
 \begin{tikzpicture}[thick]
   \node [coordinate] (3) at (0,0.375) {};
   \node [coordinate] (4) at (1.3,0.375) {};
   \node [coordinate] (1) at (0,2) {};
   \node [coordinate] (2) at (1.3,2) {};
   \path
   (1) edge (3)
   (2) edge (4)
   (3) edge [-, bend right=90] (4);
   \end{tikzpicture}
        \hspace{3em}
   \begin{tikzpicture}[thick]
   \node [coordinate] (3) at (0,1.625) {};
   \node [coordinate] (4) at (1.3,1.625) {};
   \node [coordinate] (1) at (0,0) {};
   \node [coordinate] (2) at (1.3,0) {};
   \path
   (3) edge (1)
   (4) edge (2)
   (3) edge [bend left=90] (4);
   \end{tikzpicture}
}
\end{center}
where \(c \in k\) is arbitrary.  

In Theorem~\ref{presrk} we find a complete set of relations obeyed by these generating morphisms,
thus giving a presentation of \(\Relk\) as a symmetric monoidal category.  To describe these
relations, it is useful to work with adjoints of the generating morphisms.  We have already seen
that the adjoint of scalar multiplication by \(c\) is scalar multiplication by \(c^{-1}\), except
when \(c = 0\).  Taking adjoints of the other four generating morphisms of \(\Vectk\), we obtain
four important but perhaps unfamiliar linear relations.  We draw these as `turned around' versions
of the original generating morphisms:

\begin{itemize}
\item \Define{Coaddition} is a linear relation from \(k\) to \(k^2\) that holds when the two 
outputs sum to the input:
\[           +^\dagger \maps k \asrelto k^2 \]
\[           +^\dagger = \{(x,y,z)  : \; x = y + z  \} \subseteq k \oplus k^2 \]
  \begin{center}
   \begin{tikzpicture}[thick]
   \node [plus]       (adder)     at (0,0)       {};
   \node [coordinate] (sum)       at (0,-0.5)    {};
   \node [coordinate] (sumup)     at (0.9,-0.5)  {};
   \node [coordinate] (input)     at (0.9,1.3)   {};
   \node [coordinate] (highcap)   at (-0.6,1.05) {};
   \node [coordinate] (outerloop) at (-1.7,0.1)  {};
   \node [coordinate] (outerout)  at (-1.7,-1)   {};
   \node [coordinate] (innerloop) at (-0.9,-0.2) {};
   \node [coordinate] (innerout)  at (-0.9,-1)   {};

   \draw (innerloop) -- (innerout);
   \path (sum) edge [bend right=90] (sumup);
   \draw (adder.io) -- (sum)
   (sumup) -- (input)
   (outerloop) -- (outerout)
   (adder.left in) .. controls +(120:0.5) and +(0,1) .. (innerloop)
   (adder.right in) .. controls +(60:0.75) and +(0.6,0) .. (highcap)
   (highcap) .. controls +(-0.6,0) and +(0,0.6) .. (outerloop);

   \node              (eq)       at (-2.75,0.15) {\(:=\)};
   \node [coplus]     (coadder)  at (-4,0.3)     {};
   \node [coordinate] (topco)    at (-4,1.3)     {};
   \node [coordinate] (leftout)  at (-4.5,-1)    {};
   \node [coordinate] (rightout) at (-3.5,-1)    {};

   \draw
   (coadder.left out) .. controls +(240:0.7) and +(90:0.7) .. (leftout)
   (coadder.right out) .. controls +(300:0.7) and +(90:0.7) .. (rightout)
   (coadder.io) -- (topco);
   \end{tikzpicture}
  \end{center}
\item \Define{Cozero} is a linear relation from \(k\) to \(\{0\}\) that holds 
when the input is zero:
\[           0^\dagger \maps k \asrelto \{0\}   \]
\[           0^\dagger = \{ (0,0)\} \subseteq k \oplus \{0\}   \]
\begin{center}
\begin{tikzpicture}[thick]
   \node [zero] (Ze) at (0,0)    {};
   \node        (eq) at (-1,0)    {\(:=\)};
   \node [zero] (Ro) at (-1.8,-0.5) {};

   \draw[rounded corners=7pt]
   (Ze) -- (0,-0.5) -- (0.5,-0.5) -- (0.5,0.5);
   \draw (Ro) -- (-1.8,0.5);
\end{tikzpicture}
\end{center}
\item \Define{Coduplication} is a linear relation from \(k^2\) to \(k\) that holds when the two
inputs both equal the output:
\[           \Delta^\dagger \maps k^2 \asrelto k \]
\[           \Delta^\dagger = \{(x,y,z)  : \; x = y = z \} \subseteq k^2 \oplus k \]
  \begin{center}
   \begin{tikzpicture}[thick]
   \node [delta]      (copier)    at (0,0)        {};
   \node [coordinate] (original)  at (0,0.5)      {};
   \node [coordinate] (origdown)  at (0.9,0.5)    {};
   \node [coordinate] (output)    at (0.9,-1.3)   {};
   \node [coordinate] (lowcup)    at (-0.6,-1.05) {};
   \node [coordinate] (outerloop) at (-1.7,-0.1)  {};
   \node [coordinate] (outerin)   at (-1.7,1)     {};
   \node [coordinate] (innerloop) at (-0.9,0.2)   {};
   \node [coordinate] (innerout)  at (-0.9,1)     {};

   \draw (innerloop) -- (innerout);
   \path (original) edge [bend left=90] (origdown);
   \draw (copier.io) -- (original)
   (origdown) -- (output)
   (outerloop) -- (outerin)
   (copier.left out) .. controls +(240:0.5) and +(0,-1) .. (innerloop)
   (copier.right out) .. controls +(300:0.75) and +(0.6,0) .. (lowcup)
   (lowcup) .. controls +(-0.6,0) and +(0,-0.6) .. (outerloop);

   \node              (eq)       at (-2.75,-0.15) {\(:=\)};
   \node [codelta]    (pier)     at (-4,-0.3)     {};
   \node [coordinate] (bottomco) at (-4,-1.3)     {};
   \node [coordinate] (leftin)  at (-4.5,1)       {};
   \node [coordinate] (rightin) at (-3.5,1)       {};

   \draw
   (pier.left in) .. controls +(120:0.7) and +(270:0.7) .. (leftin)
   (pier.right in) .. controls +(60:0.7) and +(270:0.7) .. (rightin)
   (pier.io) -- (bottomco);
   \end{tikzpicture}
  \end{center}
\item \Define{Codeletion} is a linear relation from \(\{0\}\) to \(k\) that holds always:
\[          !^\dagger \maps \{0\} \asrelto k \]
\[          !^\dagger = \{(0,x) \} \subseteq \{0\} \oplus k \]
\begin{center}
\begin{tikzpicture}[thick]
   \node [bang] (Ba) at (0,0)    {};
   \node        (eq) at (-1,0)    {\(:=\)};
   \node [bang] (ng) at (-1.8,0.5) {};

   \draw[rounded corners=7pt]
   (Ba) -- (0,0.5) -- (0.5,0.5) -- (0.5,-0.5);
   \draw (ng) -- (-1.8,-0.5);
\end{tikzpicture}
\end{center}
\end{itemize}
Since \(+^\dagger,0^\dagger,\Delta^\dagger\) and \(!^\dagger\) automatically obey turned-around 
versions of the relations obeyed  by \(+,0,\Delta\) and \(!\), we see that \(k\) acquires a 
\emph{second} bicommutative bimonoid structure when considered as an object in \(\Relk\).  

Moreover, the four dark operations make \(k\) into a \Define{Frobenius monoid}.  This means that
\((k,+,0)\) is a monoid, \((k,+^\dagger, 0^\dagger)\) is a comonoid, and the \Define{Frobenius 
relation} holds:
\begin{center}
 \scalebox{1}{
   \begin{tikzpicture}[thick]
   \node [plus] (sum1) at (0.5,-0.216) {};
   \node [coplus] (cosum1) at (1,0.216) {};
   \node [coordinate] (sum1corner) at (0,0.434) {};
   \node [coordinate] (cosum1corner) at (1.5,-0.434) {};
   \node [coordinate] (sum1out) at (0.5,-0.975) {};
   \node [coordinate] (cosum1in) at (1,0.975) {};
   \node [coordinate] (1cornerin) at (0,0.975) {};
   \node [coordinate] (1cornerout) at (1.5,-0.975) {};

   \draw[rounded corners] (1cornerin) -- (sum1corner) -- (sum1.left in)
   (1cornerout) -- (cosum1corner) -- (cosum1.right out);
   \draw (sum1.right in) -- (cosum1.left out)
   (sum1.io) -- (sum1out)
   (cosum1.io) -- (cosum1in);

   \node (eq1) at (2,0) {\(=\)};
   \node [plus] (sum2) at (3,0.325) {};
   \node [coplus] (cosum2) at (3,-0.325) {};
   \node [coordinate] (sum2inleft) at (2.5,0.975) {};
   \node [coordinate] (sum2inright) at (3.5,0.975) {};
   \node [coordinate] (cosum2outleft) at (2.5,-0.975) {};
   \node [coordinate] (cosum2outright) at (3.5,-0.975) {};

   \draw (sum2inleft) .. controls +(270:0.3) and +(120:0.15) .. (sum2.left in)
   (sum2inright) .. controls +(270:0.3) and +(60:0.15) .. (sum2.right in)
   (cosum2outleft) .. controls +(90:0.3) and +(240:0.15) .. (cosum2.left out)
   (cosum2outright) .. controls +(90:0.3) and +(300:0.15) .. (cosum2.right out)
   (sum2.io) -- (cosum2.io);

   \node (eq2) at (4,0) {\(=\)};
   \node [plus] (sum3) at (5.5,-0.216) {};
   \node [coplus] (cosum3) at (5,0.216) {};
   \node [coordinate] (sum3corner) at (6,0.434) {};
   \node [coordinate] (cosum3corner) at (4.5,-0.434) {};
   \node [coordinate] (sum3out) at (5.5,-0.975) {};
   \node [coordinate] (cosum3in) at (5,0.975) {};
   \node [coordinate] (3cornerin) at (6,0.975) {};
   \node [coordinate] (3cornerout) at (4.5,-0.975) {};

   \draw[rounded corners] (3cornerin) -- (sum3corner) -- (sum3.right in)
   (3cornerout) -- (cosum3corner) -- (cosum3.left out);
   \draw (sum3.left in) -- (cosum3.right out)
   (sum3.io) -- (sum3out)
   (cosum3.io) -- (cosum3in);
   \end{tikzpicture}

}
\end{center}
All three expressions in this equation are linear relations saying that the sum of the two inputs
equal the sum of the two outputs.  

The operation sending each linear relation to its adjoint extends to a contravariant functor 
\[ \dagger \maps \Relk\ \to \Relk ,\]
which obeys a list of properties that are summarized by saying that \(\Relk\) is a
`\(\dagger\)-compact' category \cite{AC,Selinger}.  Because two of the operations in the Frobenius
monoid \((k, +,0,+^\dagger,0^\dagger)\) are adjoints of the other two, it is a
\Define{\(\dagger\)-Frobenius monoid}.  This Frobenius monoid is also \Define{special}, meaning that
comultiplication (in this case \(+^\dagger\)) followed by multiplication (in this case \(+\)) equals
the identity:
\begin{center}
   \begin{tikzpicture}[thick]
   \node [plus] (sum) at (0.4,-0.5) {};
   \node [coplus] (cosum) at (0.4,0.5) {};
   \node [coordinate] (in) at (0.4,1) {};
   \node [coordinate] (out) at (0.4,-1) {};
   \node (eq) at (1.3,0) {\(=\)};
   \node [coordinate] (top) at (2,1) {};
   \node [coordinate] (bottom) at (2,-1) {};

   \path (sum.left in) edge[bend left=30] (cosum.left out)
   (sum.right in) edge[bend right=30] (cosum.right out);
   \draw (top) -- (bottom)
   (sum.io) -- (out)
   (cosum.io) -- (in);
   \end{tikzpicture}
  \end{center}
This Frobenius monoid is also commutative---and cocommutative, but for Frobenius monoids this
follows from commutativity.

Starting around 2008, commutative special \(\dagger\)-Frobenius monoids have become important in the
categorical foundations of quantum theory, where they can be understood as `classical structures'
for quantum systems \cite{CPV,Vicary}.  The category \(\Fin\Hilb\) of finite-dimensional Hilbert
spaces and linear maps is a \(\dagger\)-compact category, where any linear map \(f \maps H \to K\)
has an adjoint \(f^\dagger \maps K \to H\) given by
\[         \langle f^\dagger \phi, \psi \rangle = \langle \phi, f \psi \rangle \]
for all \(\psi \in H, \phi \in K \).  A commutative special \(\dagger\)-Frobenius monoid in
\(\Fin\Hilb\) is then the same as a Hilbert space with a chosen orthonormal basis.  The reason is
that given an orthonormal basis \( \psi_i \) for a finite-dimensional Hilbert space \(H\), we can
make \(H\) into a  commutative special \(\dagger\)-Frobenius monoid with multiplication \(m \maps H
\otimes H \to H\) given by
\[     m (\psi_i \otimes \psi_j ) = \left\{ \begin{array}{cl}  \psi_i & i = j \\
                                                                                0 & i \ne j  
\end{array}\right.  \]
and unit \(i \maps \C \to H\) given by
\[   i(1) = \sum_i \psi_i . \]
The comultiplication \(m^\dagger\) duplicates basis states:
\[        m^\dagger(\psi_i) = \psi_i \otimes \psi_i  . \]
Conversely, any commutative special \(\dagger\)-Frobenius monoid in \(\Fin\Hilb\) arises this way.  

Considerably earlier, around 1995, commutative Frobenius monoids were recognized as important in
topological quantum field theory.  The reason, ultimately, is that the free symmetric monoidal
category on a commutative Frobenius monoid is \(2\Cob\), the category with 2-dimensional oriented
cobordisms as morphisms: see Kock's textbook \cite{Kock} and the many references therein.  But the
free symmetric monoidal category on a commutative \emph{special} Frobenius monoid was worked out 
even earlier \cite{CW,Kock2,RSW}: it is the category with finite sets as objects, where a morphism 
\(f \maps X \to Y\) is an isomorphism class of cospans
\[        X \longrightarrow S \longleftarrow Y  .\]
This category can be made into a \(\dagger\)-compact category in an obvious way, and then the
1-element set becomes a commutative special \(\dagger\)-Frobenius monoid.  

For all these reasons, it is interesting to find a commutative special \(\dagger\)-Frobenius monoid
lurking at the heart of control theory!  However, the Frobenius monoid here has yet another
property, which is more unusual.  Namely, the unit \(0 \maps \{0\} \asrelto k\) followed by the
counit \(0^\dagger \maps k \asrelto \{0\} \) is the identity:
\begin{center}
\scalebox{1}{
\begin{tikzpicture}[-, thick, node distance=0.7cm]
   \node [zero] (Bins) {};
   \node [zero] (Tins) [above of=Bins] {};
   \path
   (Tins) edge (Bins);
   \end{tikzpicture}
\quad
\raisebox{1em}{=}
}
\end{center}
We call a special Frobenius monoid that also obeys this extra law \Define{extra-special}.  One can
check that the free symmetric monoidal category on a commutative extra-special Frobenius monoid is
the category with finite sets as objects, where a morphism \(f \maps X \to Y\) is an equivalence
relation on the disjoint union \(X \sqcup Y\), and we compose \(f \maps X \to Y\) and \(g \maps Y
\to Z\) by letting \(f\) and \(g\) generate an equivalence relation on \(X \sqcup Y \sqcup Z\) and
then restricting this to \(X \sqcup Z\).

As if this were not enough, the light operations share many properties with the dark ones.  In
particular, these operations make \(k\) into a commutative extra-special \(\dagger\)-Frobenius
monoid in a second way.  In summary:
\begin{itemize}
\item \((k, +, 0, \Delta, !)\) is a bicommutative bimonoid;
\item \((k, \Delta^\dagger, !^\dagger, +^\dagger, 0^\dagger)\) is a bicommutative bimonoid;
\item \((k, +, 0, +^\dagger, 0^\dagger)\) is a commutative extra-special 
\(\dagger\)-Frobenius monoid;
\item \((k, \Delta^\dagger, !^\dagger, \Delta, !)\) is a commutative extra-special 
\(\dagger\)-Frobenius monoid.
\end{itemize}

It should be no surprise that with all these structures built in, signal-flow diagrams are a
powerful method of designing processes.  However, it is surprising that most of these structures 
are present in a seemingly very different context: the so-called `ZX calculus', a diagrammatic 
formalism for working with complementary observables in quantum theory \cite{CD}.  This arises 
naturally when one has an \(n\)-dimensional Hilbert space \(H\) with two orthonormal bases 
\(\psi_i, \phi_i \) that are `mutually unbiased', meaning that
\[           |\langle \psi_i, \phi_j\rangle|^2 = \displaystyle{\frac{1}{n}}  \]
for all \(1 \le i, j \le n\).  Each orthonormal basis makes \(H\) into commutative special
\(\dagger\)-Frobenius monoid in \(\Fin\Hilb\).  Moreover, the multiplication and unit of either one
of these Frobenius monoids fits together with the comultiplication and counit of the other to form 
a bicommutative bimonoid.  So, we have all the structure present in the list above---except
that these Frobenius monoids are only extra-special if \(H\) is 1-dimensional.  

The field \(k\) is also a 1-dimensional vector space, but this is a red herring: in \(\Relk\)
\emph{every} finite-dimensional vector space naturally acquires all four structures listed above,
since addition, zero, duplication and deletion are well-defined and obey all the relations we have
discussed.  We focus on \(k\) in this paper simply because it generates all the objects \(\Relk\)
via direct sum.

Finally, in \(\Relk\) the cap and cup are related to the light and dark operations as follows:
\begin{center}
\scalebox{1}{
   \begin{tikzpicture}[thick]
   \node (eq) at (0.2,-0.1) {\(=\)};
   \node [coordinate] (lcap) at (-1.5,0.5) {};
   \node [coordinate] (rcap) at (-0.5,0.5) {};
   \node [coordinate] (lcapbot) at (-1.5,-1) {};
   \node [coordinate] (rcapbot) at (-0.5,-1) {};
   \node [delta] (dub) at (1.25,0) {};
   \node [bang] (boom) at (1.25,0.65) {};
   \node [coordinate] (Leftout) at (0.75,-1) {};
   \node [coordinate] (Rightout) at (1.75,-1) {};

   \draw (dub.left out) .. controls +(240:0.5) and +(90:0.5) .. (Leftout)
      (dub.right out) .. controls +(300:0.5) and +(90:0.5) .. (Rightout);
   \draw (boom) -- (dub) (lcapbot) -- (lcap) (rcap) -- (rcapbot);
   \path (lcap) edge[bend left=90] (rcap);
   \end{tikzpicture}
\qquad \qquad
   \begin{tikzpicture}[thick]
   \node [multiply] (neg) at (0,0.1) {\(\scriptstyle{-1}\)};
   \node [coordinate] (cupInLeft) at (0,1) {};
   \node [coordinate] (Lcup) at (0,-0.5) {};
   \node [coordinate] (Rcup) at (1,-0.5) {};
   \node [coordinate] (cupInRight) at (1,1) {};
   \node (eq) at (1.7,0.1) {\(=\)};
   \node [coordinate] (SumLeftIn) at (2.25,1) {};
   \node [coordinate] (SumRightIn) at (3.25,1) {};
   \node [plus] (Sum) at (2.75,0) {};
   \node [zero] (coZero) at (2.75,-0.65) {};

   \draw (SumRightIn) .. controls +(270:0.5) and +(60:0.5) .. (Sum.right in)
      (SumLeftIn) .. controls +(270:0.5) and +(120:0.5) .. (Sum.left in);
   \draw (cupInLeft) -- (neg) -- (Lcup)
      (Rcup) -- (cupInRight)
      (Sum) -- (coZero);
   \path (Lcup) edge[bend right=90] (Rcup);
   \end{tikzpicture}
}
\end{center}
Note the curious factor of \(-1\) in the second equation, which breaks some of the symmetry we have
seen so far.  This equation says that two elements \(x, y \in k\) sum to zero if and only if \(-x =
y\).  Using the zigzag relations, the two equations above give
\begin{center}
 \begin{tikzpicture}[thick]
   \node (eq) {\(=\)};
   \node[delta] (Lup) at (-1,0.216) {};
   \node[plus] (Ldn) at (-1.5,-0.216) {};
   \node[coordinate] (Lupo) at (-0.5,-0.434) {};
   \node[coordinate] (Ldni) at (-2,0.434) {};
   \node[bang] (Lupi) at (-1,0.866) {};
   \node[zero] (Ldno) at (-1.5,-0.866) {};
   \node (Lo) at (-0.5,-1.082) {};
   \node (Li) at (-2,1.082) {};
   \node [multiply] (neg1) [right of=eq] {\(\scriptstyle{-1}\)};
   \node[coordinate] (inR) [above of=neg1] {};
   \node[coordinate] (outR) [below of=neg1] {};

   \draw[rounded corners] (Li) -- (Ldni) -- (Ldn.left in) (Lo) -- (Lupo) -- (Lup.right out);
   \draw (Ldn) -- (Ldno) (Lup) -- (Lupi) (Ldn.right in) -- (Lup.left out);
   \path (neg1) edge (inR) edge (outR);
   \end{tikzpicture}
\end{center}
We thus see that in \(\Relk\), both additive and multiplicative inverses can be expressed in terms
of the generating morphisms used in signal-flow diagrams.

Theorem~\ref{presrk} gives a presentation of \(\Relk\) based on the ideas just discussed.  Briefly,
it says that \(\Relk\) is equivalent to the symmetric monoidal category generated by an object \(k\)
and these morphisms:
\begin{enumerate}
\item addition \(+\maps k^2 \asrelto k\)
\item zero \(0 \maps \{0\} \asrelto k \)
\item duplication \(\Delta\maps k\asrelto k^2 \)
\item deletion \(! \maps k \asrelto 0\)
\item scalar multiplication \(c\maps k\asrelto k\) for any \(c\in k\)
\item cup \(\cup \maps k^2 \asrelto \{0\} \)
\item cap \(\cap \maps \{0\} \asrelto k^2 \)
\end{enumerate}
obeying these relations:
\begin{enumerate}
\item \((k, +, 0, \Delta, !)\) is a bicommutative bimonoid;
\item \(\cap\) and \(\cup\) obey the zigzag equations;
\item \((k, +, 0, +^\dagger, 0^\dagger)\) is a commutative extra-special 
\(\dagger\)-Frobenius monoid;
\item \((k, \Delta^\dagger, !^\dagger, \Delta, !)\) is a commutative extra-special 
\(\dagger\)-Frobenius monoid;
\item the field operations of \(k\) can be recovered from the generating morphisms; 
\item the generating morphisms (1)-(4) commute with scalar multiplication.
\end{enumerate}
Note that item (2) makes \(\Relk\) into a \(\dagger\)-compact category, allowing us to mention the
adjoints of generating morphisms in the subsequent relations.  Item (5) means that \(+,\cdot, 0,
1\) and also additive and multiplicative inverses in the field \(k\) can be expressed in terms of 
signal-flow diagrams in the manner we have explained.

\section{A presentation of \(\Vectk\)}
\label{finvect}

Our goal in this section is to find a presentation for the symmetric monoidal category \(\Vectk\).
To simplify some technicalities, we shall use Mac Lane's coherence theorem \cite{MacLane} to choose
a symmetric monoidal equivalence \(F \maps \Vectk' \to \Vectk\) where \(\Vectk'\) is strict.  This
allows us to avoid mentioning associators and unitors, since in \(\Vectk'\) these are identity
morphisms.  In what follows, we call \(\Vectk'\) simply \(\Vectk\), and call objects and morphisms
in \(\Vectk\) by the names of their images under \(F\).  Colloquially speaking, we `work in a strict
version' of \(\Vectk\), and do not bother to indicate that this is a different (though equivalent)
symmetric monoidal category.

We say a strict symmetric monoidal category \(C\) is \Define{generated} by a set \(O\) of objects
and a set \(M\) of morphisms going between tensor products of objects in \(O\) if the smallest
subcategory \(C_0\) of \(C\) containing:
\begin{itemize}
\item the objects in \(O\),
\item the morphisms in \(M\),
\item the tensor products of any objects or morphisms in \(C_0\)
\item the braiding for any pair of objects in \(C_0\)
\end{itemize}
has the property that the inclusion \(i \maps C_0 \to C\) is an equivalence of categories.  It
follows that \(i\) extends to an equivalence of symmetric monoidal categories.  In this situation we
call the elements of \(O\) \Define{generating objects} for \(C\), and call the elements of \(M\)
\Define{generating morphisms}.  

\begin{lemma} \label{gensvk} 
For any field \(k\), the object \(k\) together with the morphisms:
\begin{enumerate}
\item scalar multiplication \(c \maps k \to k\) for any \(c \in k\)
\item addition \(+ \maps k \oplus k \to k\)
\item zero \(0 \maps \{0\} \to k\)
\item duplication \(\Delta \maps k \to k \oplus k\)
\item deletion \(! \maps k \to \{0\}\)
\end{enumerate}
generate \(\Vectk\), the category of finite-dimensional vector spaces over \(k\) and linear maps, as
a symmetric monoidal category.
\end{lemma}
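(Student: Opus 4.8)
The plan is to show that every linear map between finite-dimensional vector spaces over $k$ can be built, up to the equivalences allowed in the definition of ``generated,'' from the five listed morphisms together with braidings, tensor products (direct sums), and composition. First I would pass to the skeleton of $\Vectk$ in which the only objects are $k^n$ for $n \ge 0$; since $k^n = k \oplus \cdots \oplus k$ is a tensor power of the generating object $k$, every object of this skeleton is already a tensor product of generating objects, so the ``objects in $O$'' condition is handled and it remains to produce the morphisms. Because $C_0$ must be closed under tensor product and contains the braiding, it suffices to realize a \emph{generating} family of linear maps $k^m \to k^n$ closed under composition---and the natural choice is: realize every $1 \times 1$ matrix (scalar multiplication, already a generator), and realize the four ``structural'' maps that let one assemble arbitrary matrices, namely addition $k^2 \to k$, zero $\{0\} \to k$, duplication $k \to k^2$, deletion $k \to \{0\}$. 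These are precisely generators (2)--(5).

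The key step is then the standard ``matrix from generators'' construction. Given an $n \times m$ matrix $A = (a_{ij})$ over $k$, I would build the corresponding linear map $k^m \to k^n$ as follows: first duplicate each of the $m$ input wires into $n$ copies using iterated $\Delta$ (and use $!$ to delete when a column is zero, or more uniformly just always fan out to $n$ wires); then apply scalar multiplication by $a_{ij}$ on the wire carrying the $i$-th copy of input $j$; then, after suitable braidings to regroup the wires by output index $i$, add together the $m$ contributions to output $i$ using iterated $+$ (and use $0$ to supply a summand when $m = 0$ or to handle the empty sum). The commutative-monoid and cocommutative-comonoid laws---plus the bimonoid interchange law and the naturality of the braiding, all of which hold in $\Vectk$---guarantee that the particular bracketing and ordering chosen do not matter, so this recipe is well-defined and visibly computes $x \mapsto Ax$. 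Since every linear map $k^m \to k^n$ in the skeleton \emph{is} such a matrix, every morphism of $\Vectk$ lies in $C_0$; combined with the fact that every object does too, the inclusion $i \maps C_0 \to C$ is essentially surjective and full, hence (being also faithful, as a subcategory inclusion) an equivalence.

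The main obstacle, such as it is, is bookkeeping rather than conceptual: one must be careful that the wiring diagram assembling $\Delta$'s, scalars, braidings, and $+$'s genuinely defines a morphism in the strict symmetric monoidal category---i.e.\ that it type-checks as a composite of tensor products of generators and braidings---and that its underlying linear map is exactly $A$ and not $A$ up to some permutation or a wrong bracketing. Handling the degenerate cases ($m = 0$ or $n = 0$, and zero rows or columns) uses $0$ and $!$ and should be mentioned explicitly. I expect the proof to dispatch all of this quickly, since the associativity, commutativity, and interchange relations needed to see that the construction is unambiguous are all just true in $\Vectk$ (we are not yet claiming they suffice as a \emph{presentation}---that is Theorem~\ref{presvk}); here we only need that the generators suffice to \emph{reach} every morphism. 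So the real content is the explicit matrix-realization recipe, and the rest is the routine observation that a full, essentially surjective subcategory inclusion is an equivalence.
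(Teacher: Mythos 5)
Your proposal is correct and follows essentially the same route as the paper: realize an arbitrary $n\times m$ matrix by fanning out each input with iterated $\Delta$ (or $!$), scaling by the entries $a_{ij}$, regrouping with braidings, and collecting with iterated $+$ (or $0$), with the degenerate cases $m=0$ or $n=0$ handled by tensor powers of $0$ and $!$. The paper merely packages the same construction as an explicit induction on the number of summands and outputs, so there is no substantive difference.
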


\begin{proof}
It suffices to show that \(k\) together with the morphisms in (1)--(5) generate the full subcategory
of \(\Vectk\) containing only the iterated direct sums \(k^n = k \oplus \cdots \oplus k\), since
this is equivalent to \(\Vectk\).  

A linear map in \(\Vectk\), \(T \maps k^m \to k^n\) can be expressed as \(n\) \(k\)-linear
combinations of \(m\) elements of \(k\).  That is, \(T(k_1, \ldots, k_m) = (\sum_j{a_{1j} k_j},
\ldots, \sum_j{a_{nj} k_j})\), \(a_{ij} \in k\).  Any \(k\)-linear combination of \(r\) elements can
be constructed with only addition, multiplication, and zero, with zero only necessary when
providing the unique \(k\)-linear combination for \(r=0\).  When \(r=1\), \(a_1 (k_1)\) is an
arbitrary \(k\)-linear combination.  For \(r>1\), \(+ (S_{r-1}, a_r (k_r))\) yields an arbitrary
\(k\)-linear combination on \(r\) elements, where \(S_{r-1}\) is an arbitrary \(k\)-linear
combination of \(r-1\) elements.  The inclusion of duplication allows process of forming
\(k\)-linear combinations to be repeated an arbitrary (finite) positive number of times, and
deletion allows the process to be repeated zero times.  When \(n\) \(k\)-linear combinations are
needed, each input may be duplicated \(n-1\) times.  Because \(\Vectk\) is being generated as a
symmetric monoidal category, the \(mn\) outputs can then be permuted into \(n\) collections of \(m\)
outputs: one output from each input for each collection.  Each collection can then form a
\(k\)-linear combination, as above.  The following diagrams illustrate the pieces that form this
inductive argument.
  \begin{center}
   \begin{tikzpicture}[thick]
   \node (top) at (0,4) {\(k_1\)};
   \node [multiply] (times) at (0,2) {\(a_1\)};
   \node (bottom) at (0,0) {\(a_1 k_1\)};

   \draw (top) -- (times) -- (bottom);
   \end{tikzpicture}
       \hspace{0.7cm}
   \begin{tikzpicture}[thick]
   \node (sum) {\(\sum\limits_{j=1}^{r-1} a_j k_j\)};
   \node [coordinate] (UL) [below of=sum, shift={(0.1,-0.9)}] {};
   \node [plus] (adder) [below right of=UL, shift={(0,-0.1)}] {};
   \node [coordinate] (UR) [above right of=adder] {};
   \node [multiply] (mult) [above of=UR, shift={(-0.1,-0.1)}] {\(a_r\)};
   \node (next) [above of=mult] {\(k_r\)};
   \node (combo) [below of=adder, shift={(0,-0.2)}] {\(\sum\limits_{j=1}^{r} a_j k_j\)};

   \draw (next) -- (mult) (mult.io) .. controls +(270:0.5) and +(60:0.5) .. (adder.right in);
   \draw (sum.270) .. controls +(270:1.2) and +(120:0.5) .. (adder.left in);
   \draw (adder) -- (combo);
   \end{tikzpicture}
        \hspace{0.7cm}
   \begin{tikzpicture}[thick, node distance=1.1cm]
   \node (top) {\(k_1\)};
   \node [delta] (dupe) [below of=top, shift={(0,-0.125)}] {};
   \node [coordinate] (L) [below left of=dupe] {};
   \node [coordinate] (R) [below right of=dupe] {};
   \node [multiply] (mult) [below of=R, shift={(-0.2,0.3)}] {\(a_{i1}\)};
   \node (prod) [below of=mult, shift={(0,-0.3)}] {\(a_{i1} k_1\)};
   \node (id) [below of=L, shift={(0.2,-1.1)}] {\(k_1\)};

   \draw (dupe.right out) .. controls +(300:0.5) and +(90:0.5) .. (mult.90) (mult.io) -- (prod);
   \draw (dupe.left out) .. controls +(240:0.8) and +(90:2) .. (id);
   \draw (top) -- (dupe);
   \end{tikzpicture}
        \hspace{0.7cm}
   \begin{tikzpicture}[thick, node distance=1.1cm]
   \node (core) {\(\sum\limits_{j=1}^{r-1} a_{ij} k_j\)};
   \node [coordinate] (subcore) [below of=core, shift={(-0.2,0)}] {};
   \node (cross) [below right of=subcore, shift={(-0.08,0.08)}] {};
   \node [coordinate] (out) [below left of=cross, shift={(0.08,-0.08)}] {};
   \node (id) [below of=out, shift={(0.2,-0.2)}] {\(k_r\)};
   \node [plus] (adder) [below right of=cross, shift={(0.08,-0.38)}] {};
   \node (sum) [below of=adder] {\(\sum\limits_{j=1}^{r} a_{ij} k_j\)};
   \node [multiply] (mult) [above right of=adder, shift={(-0.4,0.45)}] {\(a_{ir}\)};
   \node [delta] (dupe) [above left of=mult, shift={(0.4,0.1)}] {};
   \node (in) [above of=dupe, shift={(0,-0.25)}] {\(k_r\)};

   \draw (core.270) .. controls +(270:0.5) and +(120:0.5) .. (cross)
   (cross) .. controls +(300:0.5) and +(120:0.5) .. (adder.left in)
   (dupe.left out) .. controls +(240:0.7) and +(90:1.8) .. (id.90);
   \draw (dupe.right out) .. controls +(300:0.2) and +(90:0.1) .. (mult.90)
   (mult.io) .. controls +(270:0.1) and +(60:0.2) .. (adder.right in)
   (adder) -- (sum) (in) -- (dupe);
   \end{tikzpicture}
  \end{center}
Since multiplication provides the map \(k_1 \mapsto a_1 k_1\), as in the far left diagram, the
middle-left diagram can be used inductively to form a \(k\)-linear combination of any number of
inputs.  In particular, we have any linear map \(S_r \maps k^m \to k\) given by \((k_1, \ldots, k_m)
\mapsto (\sum_j a_{rj} k_j)\).  Using duplication as in the middle-right diagram, one can produce
the map \(k_1 \mapsto (k_1, a_{i1} k_1)\), to which the right diagram can be inductively applied.
Thus we can build any linear map, \(T_j \in \Vectk\), \(T_j \maps k^m \to k^{m+1}\) given by \((k_1,
\ldots, k_m) \mapsto (k_1, \ldots, k_m, \sum_j a_{ij} k_j)\).  If we represent the identity map on
\(k^r\) as \(1^r\), the \(r\)-fold tensor product of the identity map on \(k\), any linear map \(T
\maps k^m \to k^n\) can be given by \((k_1, \ldots, k_m) \mapsto (\sum_j a_{1j} k_j, \ldots, \sum_j
a_{nj} k_j)\), which can be expressed as \(T = (S_1 \oplus 1^{n-1}) (T_2 \oplus 1^{n-2}) \cdots
(T_{n-1} \oplus 1^1) T_n\).  The above works as long as the vector spaces are not \(0\)-dimensional.
\(f \maps k^m \to \{0\}\) can be written as an \(m\)-fold tensor product of deletion, \(!^m\), and
\(f \maps \{0\} \to k^n\) can be written as an \(n\)-fold tensor product of zero, \(0^n\).  \(f
\maps \{0\} \to \{0\}\) is the empty morphism, which has an empty diagram for its string diagram.
\end{proof}

It is easy to see that the morphisms given in Lemma~\ref{gensvk} obey the following 18 relations:

\vskip 1em \noindent
\textbf{(1)--(3)}  Addition and zero make \(k\) into a commutative monoid:

  \begin{center}
    \scalebox{0.80}{
   \begin{tikzpicture}[-, thick, node distance=0.74cm]
   \node [plus] (summer) {};
   \node [coordinate] (sum) [below of=summer] {};
   \node [coordinate] (Lsum) [above left of=summer] {};
   \node [zero] (insert) [above of=Lsum, shift={(0,-0.35)}] {};
   \node [coordinate] (Rsum) [above right of=summer] {};
   \node [coordinate] (sumin) [above of=Rsum] {};
   \node (equal) [right of=Rsum, shift={(0,-0.26)}] {\(=\)};
   \node [coordinate] (in) [right of=equal, shift={(0,1)}] {};
   \node [coordinate] (out) [right of=equal, shift={(0,-1)}] {};

   \draw (insert) .. controls +(270:0.3) and +(120:0.3) .. (summer.left in)
         (summer.right in) .. controls +(60:0.6) and +(270:0.6) .. (sumin)
         (summer) -- (sum)    (in) -- (out);
   \end{tikzpicture}
        \hspace{1.0cm}
   \begin{tikzpicture}[-, thick, node distance=0.7cm]
   \node [plus] (uradder) {};
   \node [plus] (adder) [below of=uradder, shift={(-0.35,0)}] {};
   \node [coordinate] (urm) [above of=uradder, shift={(-0.35,0)}] {};
   \node [coordinate] (urr) [above of=uradder, shift={(0.35,0)}] {};
   \node [coordinate] (left) [left of=urm] {};

   \draw (adder.right in) .. controls +(60:0.2) and +(270:0.1) .. (uradder.io)
         (uradder.right in) .. controls +(60:0.35) and +(270:0.3) .. (urr)
         (uradder.left in) .. controls +(120:0.35) and +(270:0.3) .. (urm)
         (adder.left in) .. controls +(120:0.75) and +(270:0.75) .. (left)
         (adder.io) -- +(270:0.5);

   \node (eq) [right of=uradder, shift={(0,-0.25)}] {\(=\)};

   \node [plus] (ulsummer) [right of=eq, shift={(0,0.25)}] {};
   \node [plus] (summer) [below of=ulsummer, shift={(0.35,0)}] {};
   \node [coordinate] (ulm) [above of=ulsummer, shift={(0.35,0)}] {};
   \node [coordinate] (ull) [above of=ulsummer, shift={(-0.35,0)}] {};
   \node [coordinate] (right) [right of=ulm] {};

   \draw (summer.left in) .. controls +(120:0.2) and +(270:0.1) .. (ulsummer.io)
         (ulsummer.left in) .. controls +(120:0.35) and +(270:0.3) .. (ull)
         (ulsummer.right in) .. controls +(60:0.35) and +(270:0.3) .. (ulm)
         (summer.right in) .. controls +(60:0.75) and +(270:0.75) .. (right)
         (summer.io) -- +(270:0.5);
   \end{tikzpicture}
        \hspace{1.0cm}
   \begin{tikzpicture}[-, thick, node distance=0.7cm]
   \node [plus] (twadder) {};
   \node [coordinate] (twout) [below of=twadder] {};
   \node [coordinate] (twR) [above right of=twadder, shift={(-0.2,0)}] {};
   \node (cross) [above of=twadder] {};
   \node [coordinate] (twRIn) [above left of=cross, shift={(0,0.3)}] {};
   \node [coordinate] (twLIn) [above right of=cross, shift={(0,0.3)}] {};

   \draw (twadder.right in) .. controls +(60:0.35) and +(-45:0.25) .. (cross)
                            .. controls +(135:0.2) and +(270:0.4) .. (twRIn);
   \draw (twadder.left in) .. controls +(120:0.35) and +(-135:0.25) .. (cross.center)
                           .. controls +(45:0.2) and +(270:0.4) .. (twLIn);
   \draw (twout) -- (twadder);

   \node (eq) [right of=twR] {\(=\)};

   \node [coordinate] (L) [right of=eq] {};
   \node [plus] (adder) [below right of=L] {};
   \node [coordinate] (out) [below of=adder] {};
   \node [coordinate] (R) [above right of=adder] {};
   \node (cross) [above left of=R] {};
   \node [coordinate] (LIn) [above left of=cross] {};
   \node [coordinate] (RIn) [above right of=cross] {};

   \draw (adder.left in) .. controls +(120:0.7) and +(270:0.7) .. (LIn)
         (adder.right in) .. controls +(60:0.7) and +(270:0.7) .. (RIn)
         (out) -- (adder);
   \end{tikzpicture}
    }
\end{center}

\vskip 1em \noindent
\textbf{(4)--(6)}  Duplication and deletion make \(k\) into a cocommutative comonoid:

\begin{center}
    \scalebox{0.80}{
   \begin{tikzpicture}[-, thick, node distance=0.74cm]
   \node [delta] (dupe) {};
   \node [coordinate] (top) [above of=dupe] {};
   \node [coordinate] (Ldub) [below left of=dupe] {};
   \node [bang] (delete) [below of=Ldub, shift={(0,0.35)}] {};
   \node [coordinate] (Rdub) [below right of=dupe] {};
   \node [coordinate] (dubout) [below of=Rdub] {};
   \node (equal) [right of=Rdub, shift={(0,0.26)}] {\(=\)};
   \node [coordinate] (in) [right of=equal, shift={(0,1)}] {};
   \node [coordinate] (out) [right of=equal, shift={(0,-1)}] {};

   \draw (delete) .. controls +(90:0.3) and +(240:0.3) .. (dupe.left out)
         (dupe.right out) .. controls +(300:0.6) and +(90:0.6) .. (dubout)
         (dupe) -- (top)    (in) -- (out);
   \end{tikzpicture}
       \hspace{1.0cm}
   \begin{tikzpicture}[-, thick, node distance=0.7cm]
   \node [delta] (lrduper) {};
   \node [delta] (duper) [above of=lrduper, shift={(-0.35,0)}] {};
   \node [coordinate](lrm) [below of=lrduper, shift={(-0.35,0)}] {};
   \node [coordinate](lrr) [below of=lrduper, shift={(0.35,0)}] {};
   \node [coordinate](left) [left of=lrm] {};

   \draw (duper.right out) .. controls +(300:0.2) and +(90:0.1) .. (lrduper.io)
         (lrduper.right out) .. controls +(300:0.35) and +(90:0.3) .. (lrr)
         (lrduper.left out) .. controls +(240:0.35) and +(90:0.3) .. (lrm)
         (duper.left out) .. controls +(240:0.75) and +(90:0.75) .. (left)
         (duper.io) -- +(90:0.5);

   \node (eq) [right of=lrduper, shift={(0,0.25)}] {\(=\)};

   \node [delta] (lldubber) [right of=eq, shift={(0,-0.25)}] {};
   \node [delta] (dubber) [above of=lldubber, shift={(0.35,0)}] {};
   \node [coordinate] (llm) [below of=lldubber, shift={(0.35,0)}] {};
   \node [coordinate] (lll) [below of=lldubber, shift={(-0.35,0)}] {};
   \node [coordinate] (right) [right of=llm] {};

   \draw (dubber.left out) .. controls +(240:0.2) and +(90:0.1) .. (lldubber.io)
         (lldubber.left out) .. controls +(240:0.35) and +(90:0.3) .. (lll)
         (lldubber.right out) .. controls +(300:0.35) and +(90:0.3) .. (llm)
         (dubber.right out) .. controls +(300:0.75) and +(90:0.75) .. (right)
         (dubber.io) -- +(90:0.5);
   \end{tikzpicture}
       \hspace{1.0cm}
   \begin{tikzpicture}[-, thick, node distance=0.7cm]
   \node [coordinate] (twtop) {};
   \node [delta] (twdupe) [below of=twtop] {};
   \node [coordinate] (twR) [below right of=twdupe, shift={(-0.2,0)}] {};
   \node (cross) [below of=twdupe] {};
   \node [coordinate] (twROut) [below left of=cross, shift={(0,-0.3)}] {};
   \node [coordinate] (twLOut) [below right of=cross, shift={(0,-0.3)}] {};

   \draw (twdupe.left out) .. controls +(240:0.35) and +(135:0.25) .. (cross)
                           .. controls +(-45:0.2) and +(90:0.4) .. (twLOut)
         (twdupe.right out) .. controls +(300:0.35) and +(45:0.25) .. (cross.center)
                            .. controls +(-135:0.2) and +(90:0.4) .. (twROut)
         (twtop) -- (twdupe);

   \node (eq) [right of=twR] {\(=\)};

   \node [coordinate] (L) [right of=eq] {};
   \node [delta] (dupe) [above right of=L] {};
   \node [coordinate] (top) [above of=dupe] {};
   \node [coordinate] (R) [below right of=dupe] {};
   \node (uncross) [below left of=R] {};
   \node [coordinate] (LOut) [below left of=uncross] {};
   \node [coordinate] (ROut) [below right of=uncross] {};

   \draw (dupe.left out) .. controls +(240:0.7) and +(90:0.7) .. (LOut)
         (dupe.right out) .. controls +(300:0.7) and +(90:0.7) .. (ROut)
         (top) -- (dupe);
   \end{tikzpicture}
    }
\end{center}

\vskip 1em \noindent
\textbf{(7)--(10)}  The monoid and comonoid structures on \(k\) fit together to form a bimonoid:

\begin{center}
    \scalebox{0.80}{
   \begin{tikzpicture}[thick]
   \node [plus] (adder) {};
   \node [coordinate] (f) [above of=adder, shift={(-0.4,-0.325)}] {};
   \node [coordinate] (g) [above of=adder, shift={(0.4,-0.325)}] {};
   \node [delta] (dupe) [below of=adder, shift={(0,0.25)}] {};
   \node [coordinate] (outL) [below of=dupe, shift={(-0.4,0.325)}] {};
   \node [coordinate] (outR) [below of=dupe, shift={(0.4,0.325)}] {};

   \draw (adder.io) -- (dupe.io)
         (f) .. controls +(270:0.4) and +(120:0.25) .. (adder.left in)
         (adder.right in) .. controls +(60:0.25) and +(270:0.4) .. (g)
         (dupe.left out) .. controls +(240:0.25) and +(90:0.4) .. (outL)
         (dupe.right out) .. controls +(300:0.25) and +(90:0.4) .. (outR);
   \end{tikzpicture}
      \raisebox{2.49em}{=}
      \hspace{1em}
   \begin{tikzpicture}[-, thick, node distance=0.7cm]
   \node [plus] (addL) {};
   \node (cross) [above right of=addL, shift={(-0.1,-0.0435)}] {};
   \node [plus] (addR) [below right of=cross, shift={(-0.1,0.0435)}] {};
   \node [delta] (dupeL) [above left of=cross, shift={(0.1,-0.0435)}] {};
   \node [delta] (dupeR) [above right of=cross, shift={(-0.1,-0.0435)}] {};
   \node [coordinate] (f) [above of=dupeL] {};
   \node [coordinate] (g) [above of=dupeR] {};
   \node [coordinate] (sum1) [below of=addL, shift={(0,0.2)}] {};
   \node [coordinate] (sum2) [below of=addR, shift={(0,0.2)}] {};

   \path
   (addL) edge (sum1) (addL.right in) edge (dupeR.left out) (addL.left in) edge [bend left=30] (dupeL.left out)
   (addR) edge (sum2) (addR.left in) edge (cross) (addR.right in) edge [bend right=30] (dupeR.right out)
   (dupeL) edge (f)
   (dupeL.right out) edge (cross)
   (dupeR) edge (g);
   \end{tikzpicture}
        \hspace{1.0cm}
   \begin{tikzpicture}[thick]
   \node [zero] (z) at (0,1) {};
   \node [delta] (dub) at (0,0.2) {};
   \node [coordinate] (oL) at (-0.35,-0.6) {};
   \node [coordinate] (oR) at (0.35,-0.6) {};

   \node (eq) at (1,0.42) {=};

   \node [zero] (zleft) at (2,1) {};
   \node [zero] (zright) at (2.7,1) {};
   \node [coordinate] (Lo) at (2,-0.6) {};
   \node [coordinate] (Ro) at (2.7,-0.6) {};

   \draw (z) -- (dub)
         (dub.left out) .. controls +(240:0.3) and +(90:0.5) .. (oL)
         (dub.right out) .. controls +(300:0.3) and +(90:0.5) .. (oR)
         (zleft) -- (Lo)
         (zright) -- (Ro);
   \end{tikzpicture}
        \hspace{1.0cm}
   \begin{tikzpicture}[thick]
   \node [bang] (b) at (0,-1) {};
   \node [plus] (sum) at (0,-0.2) {};
   \node [coordinate] (oL) at (-0.35,0.6) {};
   \node [coordinate] (oR) at (0.35,0.6) {};
   \node (spacemaker) at (0,-1.38) {};

   \node (eq) at (1,-0.47) {=};

   \node [bang] (bleft) at (2,-1) {};
   \node [bang] (bright) at (2.7,-1) {};
   \node [coordinate] (Lo) at (2,0.6) {};
   \node [coordinate] (Ro) at (2.7,0.6) {};

   \draw (b) -- (sum)
         (sum.left in) .. controls +(120:0.3) and +(270:0.5) .. (oL)
         (sum.right in) .. controls +(60:0.3) and +(270:0.5) .. (oR)
         (bleft) -- (Lo)
         (bright) -- (Ro);
   \end{tikzpicture}
       \hspace{1.0cm}
   \begin{tikzpicture}[thick]
   \node [zero] (z) at (0,0.11) {};
   \node [bang] (b) at (0,-1) {};
   \node (spacemaker) at (0,-1.38) {};

   \node (eq) at (0.7,-0.47) {=};

   \draw (z) -- (b);
   \end{tikzpicture}
    }
\end{center}

\vskip 1em \noindent
\textbf{(11)--(14)}  The rig structure of \(k\) can be recovered from the generating morphisms:

\begin{center}
    \scalebox{0.80}{
   \begin{tikzpicture}[-, thick]
   \node (top) {};
   \node [multiply] (c) [below of=top] {\(c\)};
   \node [multiply] (b) [below of=c] {\(b\)};
   \node (bottom) [below of=b] {};

   \draw (top) -- (c) -- (b) -- (bottom);

   \node (eq) [left of=b, shift={(0.2,0.5)}] {\(=\)};

   \node (bctop) [left of=top, shift={(-0.6,0)}] {};
   \node [multiply] (bc) [left of=eq, shift={(0.2,0)}] {\(bc\)};
   \node (bcbottom) [left of=bottom, shift={(-0.6,0)}] {};

   \draw (bctop) -- (bc) -- (bcbottom);
   \end{tikzpicture}
        \hspace{1.0cm}
   \begin{tikzpicture}[-, thick, node distance=0.85cm]
   \node (bctop) {};
   \node [multiply] (bc) [below of=bctop, shift={(0,-0.59)}] {\(\scriptstyle{b+c}\)};
   \node (bcbottom) [below of=bc, shift={(0,-0.59)}] {};

   \draw (bctop) -- (bc) -- (bcbottom);

   \node (eq) [right of=bc, shift={(0.15,0)}] {\(=\)};

   \node [multiply] (b) [right of=eq, shift={(0,0.1)}] {\(\scriptstyle{b}\)};
   \node [delta] (dupe) [above right of=b, shift={(-0.2,0.1)}] {};
   \node (top) [above of=dupe, shift={(0,-0.2)}] {};
   \node [multiply] (c) [below right of=dupe, shift={(-0.2,-0.1)}] {\(\scriptstyle{c}\)};
   \node [plus] (adder) [below right of=b, shift={(-0.2,-0.3)}] {};
   \node (out) [below of=adder, shift={(0,0.2)}] {};

   \draw
   (dupe.left out) .. controls +(240:0.15) and +(90:0.15) .. (b.90)
   (dupe.right out) .. controls +(300:0.15) and +(90:0.15) .. (c.90)
   (top) -- (dupe.io)
   (adder.io) -- (out)
   (adder.left in) .. controls +(120:0.15) and +(270:0.15) .. (b.io)
   (adder.right in) .. controls +(60:0.15) and +(270:0.15) .. (c.io);
   \end{tikzpicture}
        \hspace{1.0cm}
\raisebox{1.6em}{
   \begin{tikzpicture}[-, thick, node distance=0.85cm]
   \node (top) {};
   \node [multiply] (one) [below of=top] {1};
   \node (bottom) [below of=one] {};

   \draw (top) -- (one) -- (bottom);

   \node (eq) [right of=one] {\(=\)};
   \node (topid) [right of=top, shift={(0.6,0)}] {};
   \node (botid) [right of=bottom, shift={(0.6,0)}] {};

   \draw (topid) -- (botid);
   \end{tikzpicture}
}
        \hspace{1.0cm}
\raisebox{1.6em}{
   \begin{tikzpicture}[-, thick, node distance=0.85cm]
   \node [multiply] (prod) {\(0\)};
   \node (in0) [above of=prod] {};
   \node (out0) [below of=prod] {};
   \node (eq) [right of=prod] {\(=\)};
   \node [bang] (del) [right of=eq, shift={(-0.2,0.2)}] {};
   \node [zero] (ins) [right of=eq, shift={(-0.2,-0.2)}] {};
   \node (in1) [above of=del, shift={(0,-0.2)}] {};
   \node (out1) [below of=ins, shift={(0,0.2)}] {};

   \draw (in0) -- (prod) -- (out0);
   \draw (in1) -- (del);
   \draw (ins) -- (out1);
   \end{tikzpicture}
}
    }
\end{center}

\vskip 1em \noindent
\textbf{(15)--(16)}  Scalar multiplication commutes with addition and zero:

\begin{center}
    \scalebox{0.80}{
   \begin{tikzpicture}[-, thick, node distance=0.85cm]
   \node [plus] (adder) {};
   \node (out) [below of=adder, shift={(0,0.2)}] {};
   \node [multiply] (L) [above left of=adder, shift={(0.15,0.45)}] {\(c\)};
   \node [multiply] (R) [above right of=adder, shift={(-0.15,0.45)}] {\(c\)};
   \node [coordinate] (RIn) [above of=R] {};
   \node [coordinate] (LIn) [above of=L] {};

   \draw (adder.right in) .. controls +(60:0.2) and +(270:0.2) .. (R.io) (R) -- (RIn);
   \draw (adder.left in) .. controls +(120:0.2) and +(270:0.2) .. (L.io) (L) -- (LIn);
   \draw (out) -- (adder);
   \end{tikzpicture}
        \hspace{0.1cm}
   \begin{tikzpicture}[node distance=1.15cm]
   \node (eq){\(=\)};
   \node [below of=eq] {};
   \end{tikzpicture}
   \begin{tikzpicture}[-, thick, node distance=0.85cm]
   \node (out) {};
   \node [multiply] (c) [above of=out] {\(c\)};
   \node [plus] (adder) [above of=c] {};
   \node [coordinate] (L) [above left of=adder, shift={(0.15,0.25)}] {};
   \node [coordinate] (R) [above right of=adder, shift={(-0.15,0.25)}] {};

   \draw (R) .. controls +(270:0.2) and +(60:0.4) .. (adder.right in) (adder) -- (c) -- (out);
   \draw (L) .. controls +(270:0.2) and +(120:0.4) .. (adder.left in);
   \end{tikzpicture}
        \hspace{0.7cm}
   \begin{tikzpicture}[-, thick, node distance=0.85cm]
   \node [multiply] (prod) {\(c\)};
   \node (out0) [below of=prod] {};
   \node [zero] (ins0) [above of=prod] {};
   \node (eq) [right of=prod] {\(=\)};
   \node (out1) [below right of=eq] {};
   \node [zero] (ins1) [above of=out1, shift={(0,0.2)}] {};

   \draw (out0) -- (prod) -- (ins0);
   \draw (out1) -- (ins1);
   \end{tikzpicture}
       \hspace{0.7cm}
    }
\end{center}

\vskip 1em \noindent
\textbf{(17)--(18)}  Scalar multiplication commutes with duplication and deletion:

\begin{center}
    \scalebox{0.80}{
   \begin{tikzpicture}[-, thick, node distance=0.85cm]
   \node (top) {};
   \node [delta] (dupe) [below of=top, shift={(0,0.2)}] {};
   \node [multiply] (L) [below left of=dupe, shift={(0.15,-0.45)}] {\(c\)};
   \node [multiply] (R) [below right of=dupe, shift={(-0.15,-0.45)}] {\(c\)};
   \node [coordinate] (ROut) [below of=R] {};
   \node [coordinate] (LOut) [below of=L] {};

   \draw (dupe.left out) .. controls +(240:0.2) and +(90:0.2) .. (L.90) (L) -- (LOut);
   \draw (dupe.right out) .. controls +(300:0.2) and +(90:0.2) .. (R.90) (R) -- (ROut);
   \draw (top) -- (dupe);
   \end{tikzpicture}
        \hspace{0.1cm}
   \begin{tikzpicture}[node distance=1.15cm]
   \node (eq){\(=\)};
   \node [below of=eq] {};
   \end{tikzpicture}
   \begin{tikzpicture}[-, thick, node distance=0.85cm]
   \node (top) {};
   \node [multiply] (c) [below of=top] {\(c\)};
   \node [delta] (dupe) [below of=c] {};
   \node [coordinate] (L) [below left of=dupe, shift={(0.15,-0.25)}] {};
   \node [coordinate] (R) [below right of=dupe, shift={(-0.15,-0.25)}] {};

   \draw (dupe.left out) .. controls +(240:0.2) and +(90:0.2) .. (L);
   \draw (dupe.right out) .. controls +(300:0.2) and +(90:0.2) .. (R);
   \draw (top) -- (c) -- (dupe);
   \end{tikzpicture}
       \hspace{0.7cm}
   \begin{tikzpicture}[-, thick, node distance=0.85cm]
   \node [multiply] (prod) {\(c\)};
   \node (in0) [above of=prod] {};
   \node [bang] (del0) [below of=prod] {};
   \node (eq) [right of=prod] {\(=\)};
   \node (in1) [above right of=eq] {};
   \node [bang] (del1) [below of=in1, shift={(0,-0.2)}] {};
   \node [below of=del1, shift={(0,-0.2)}] {};

   \draw (in0) -- (prod) -- (del0);
   \draw (in1) -- (del1);
   \end{tikzpicture}
    }
  \end{center}

In fact, these relations are enough.  That is, together with the generating objects and morphisms,
they give a `presentation' of \(\Vectk\) as a symmetric monoidal category.  However, we need to 
make this concept precise.

Suppose \(C\) is generated by a set \(O\) of objects and a set \(M\) of morphisms going between
tensor products of objects in \(O\).  Define a \Define{formal morphism} to be a formal expression
built from symbols for morphisms in \(M\) via composition, identity morphisms, tensor product, the
unit object and the braiding.  Any formal morphism \(f\) can be \Define{evaluated} to obtain a
morphism \(\ev(f)\) in \(C\), which actually lies in \(C_0\).

Define a \Define{relation} to be a pair \(f,g\) of formal morphisms.  We say the relation
\Define{holds} in \(C\) if \(\ev(f) = \ev(g) \).  Suppose \(R\) is a set of relations that hold in
\(C\).  We say \((O,M,R)\) is a \Define{presentation} of \(C\) if given any two formal morphisms
\(j,k\) that evaluate to the same morphism, then we can go from \(j\) to \(k\) via a finite 
sequence of moves of these kinds:
\begin{enumerate}
\item replacing an instance of a generating morphism \(f\) in a formal morphism by the generating
morphism \(g\), where \((f,g) \in R\),
\item applying an equational law in the definition of strict symmetric monoidal category to a 
formal morphism.
\end{enumerate}
In intuitive terms, this means that there are enough relations to prove all the equations 
that hold in \(C\)---or more precisely, in the equivalent category \(C_0\).

\begin{thm} \label{presvk} The symmetric monoidal category \(\Vectk\) is presented by the object 
\(k\), the morphisms given in Lemma~\ref{gensvk}, and relations {\bf (1)--(18)} as listed above.
\end{thm}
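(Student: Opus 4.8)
The plan is to recast the statement categorically and then prove it by a normal-form argument. Let \(F\) denote the strict symmetric monoidal category whose morphisms are the formal morphisms built from the generators of Lemma~\ref{gensvk}, with two formal morphisms identified when one is obtained from the other by the moves in the definition of presentation, using relations {\bf (1)--(18)}. Since those relations provably hold in \(\Vectk\) (as already noted just above the theorem), evaluation descends to a symmetric monoidal functor \(E \maps F \to \Vectk\), and by Lemma~\ref{gensvk} this functor is essentially surjective and full. Thus the theorem reduces to the single claim that \(E\) is \emph{faithful}: whenever \(j,k \maps k^m \to k^n\) are formal morphisms with \(\ev(j) = \ev(k)\), the formal morphisms \(j\) and \(k\) are already identified in \(F\). (This is literally what ``presentation'' demands.)

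To prove faithfulness I would introduce an explicit \emph{normal form}. Working in the skeleton where objects are the spaces \(k^n\), so that a morphism \(k^m \to k^n\) is an \(n \times m\) matrix, I assign to each matrix \(A = (a_{ij})\) a formal morphism \(D_A \maps k^m \to k^n\) built in four layers: first duplicate each of the \(m\) inputs into \(n\) copies (iterated \(\Delta\), or \(!\) when \(n = 0\)); then braid the resulting \(mn\) wires into \(n\) blocks of \(m\), the \(i\)-th block carrying one copy of each input; then in the \(i\)-th block multiply the \(j\)-th wire by \(a_{ij}\); finally collapse each block to one wire by iterated addition (iterated \(+\), or \(0\) when \(m = 0\)). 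Relations {\bf (1)--(6)}---co/associativity, co/commutativity, and co/unitality for \((\Delta,!)\) and \((+,0)\)---ensure that \(D_A\) is a well-defined element of \(F\), independent of the chosen bracketings and orderings, and a direct check shows \(\ev(D_A)\) is the linear map with matrix \(A\).

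The key step is to show that \emph{every} morphism of \(F\) equals some \(D_A\), by induction on the construction of a formal morphism. Identities and braidings are visibly of this shape, and each generator either is already a tiny normal form or reduces to one. For the inductive step one must show the normal forms are closed under \(\oplus\) and under \(\of\). Closure under \(\oplus\) is routine: \(D_A \oplus D_B\) is converted to \(D_{A \oplus B}\) by rearranging wires with braidings, using {\bf (13)--(14)} to manufacture the zero entries of the block-diagonal matrix. Closure under composition is the crux: one must rewrite \(D_B \of D_A\) into \(D_{BA}\), i.e.\ derive the matrix-product formula \((BA)_{ik} = \sum_j B_{ij} A_{jk}\) purely diagrammatically. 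Here the bimonoid relations {\bf (7)--(10)} are exactly what fuses the final addition layer of \(D_A\) with the initial duplication layer of \(D_B\)---turning a wire carrying a sum into a sum of copies---while {\bf (11)--(12)} combine the resulting composites of scalars and {\bf (15)--(18)} slide those scalars past the (co)monoid operations; degenerate dimensions are absorbed by {\bf (13)--(14)} and the unit/counit laws. Carrying this normalization through all the blocks, with care about bookkeeping, is the main obstacle and is where essentially every relation earns its keep.

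With the normal-form theorem in hand the conclusion is immediate. Given \(j,k \maps k^m \to k^n\) with \(\ev(j) = \ev(k)\), rewrite each via moves to its normal form; since \(D_A\) is determined by \(A\) and \(\ev(D_A)\) recovers \(A\), both rewrite to the \emph{same} \(D_A\). Concatenating the rewriting of \(j\) into \(D_A\) with the reversal of the rewriting of \(k\) into \(D_A\) exhibits the required finite sequence of moves from \(j\) to \(k\). Hence \(E\) is faithful; combined with its fullness and essential surjectivity, this is precisely the assertion that the object \(k\), the morphisms of Lemma~\ref{gensvk}, and relations {\bf (1)--(18)} present \(\Vectk\).
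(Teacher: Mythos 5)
Your proposal is correct and follows essentially the same route as the paper: the identical matrix-indexed normal form (duplication layer, scalar layer, addition layer), reached by deploying the bimonoid relations to fuse additions with duplications, \textbf{(11)--(12)} to compose and add scalars, and \textbf{(13)--(18)} for units, zeros, and sliding scalars past the (co)monoid operations. The only organizational difference is that the paper's induction appends one basic morphism at a time (eleven cases) instead of proving closure of normal forms under arbitrary binary composition \(D_B \of D_A \to D_{BA}\); in carrying out that single composition step you would in practice decompose \(D_B\) into its generators and reproduce the paper's case analysis.
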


\begin{proof}
To prove this, we show that these relations suffice to rewrite any formal morphism into a
standard form, with all formal morphisms that evaluate to the same morphism \(T \maps k^m \to k^n\)
in \(\Vectk\) having the same standard form.  To deal with moves of type (2), we draw formal 
morphisms as string diagrams built from generating morphisms and the braiding.  Two formal 
morphisms that differ only by equational laws in the definition of strict symmetric monoidal 
category will have topologically equivalent string diagrams.  It suffices, then, to show that any 
string diagram built from generating morphisms and the braiding can be put into a standard form 
using topological equivalences and relations \textbf{(1)--(18)}.

A qualitative description of this standard form will be helpful for understanding how an arbitrary
string diagram can be rewritten in this form.  By way of example, consider the linear
transformation \(T \maps \R^3 \to \R^2\) given by 
\[ (x_1, x_2, x_3) \mapsto (y_1, y_2) = (3x_1 + 7x_2 +
2x_3, 9x_1 + x_2).\]
Its standard form looks like this:
  \begin{center}
    \scalebox{0.80}{
   \begin{tikzpicture}[thick]
   \node (i1) at (-2.5,3.4) {\(x_1\)};
   \node (i2) at (-0.5,3.4) {\(x_2\)};
   \node (i3) at (1.5,3.4) {\(x_3\)};
   \node[multiply] (a11) at (-3,1.732) {\(3\)};
   \node[multiply] (a21) at (-2,1.732) {\(9\)};
   \node[multiply] (a12) at (-1,1.732) {\(7\)};
   \node[multiply] (a22) at (0,1.732) {\(1\)};
   \node[multiply] (a13) at (1,1.732) {\(2\)};
   \node[multiply] (a23) at (2,1.732) {\(0\)};
   \node[delta] (D1) at (-2.5,2.6) {};
   \node[delta] (D2) at (-0.5,2.6) {};
   \node[delta] (D3) at (1.5,2.6) {};
   \node[plus] (S1T) at (-2,0.31) {};
   \node[plus] (S1B) at (-1,-0.556) {};
   \node[plus] (S2T) at (-1,0.31) {};
   \node[plus] (S2B) at (0,-0.556) {};
   \node (o1) at (-1,-1.24) {\(y_1\)};
   \node (o2) at (0,-1.24) {\(y_2\)};

   \draw
   (a11.io) .. controls +(270:0.2) and +(120:0.32) .. (S1T.left in)
   (a21.io) .. controls +(270:0.2) and +(120:0.32) .. (S2T.left in)
   (a22.io) .. controls +(270:0.2) and +(60:0.32) .. (S2T.right in)
   (S1T.io) .. controls +(270:0.2) and +(120:0.2) .. (S1B.left in)
   (i1) -- (D1) (i2) -- (D2) (i3) -- (D3)
   (S1B) -- (o1) (S2B) -- (o2)
   (S2T.io) .. controls +(270:0.2) and +(120:0.2) .. (S2B.left in)
   (a23.io) .. controls +(270:0.5) and +(60:0.5) .. (S2B.right in);
   \node [hole] (cross2) at (-0.59,-0.2) {};
   \node [hole] (cross1) at (-1.5,0.75) {};
   \draw
   (a12.io) .. controls +(270:0.2) and +(60:0.32) .. (S1T.right in)
   (a13.io) .. controls +(270:0.5) and +(60:0.5) .. (S1B.right in)
   (D1.left out) .. controls +(240:0.2) and +(90:0.2) .. (a11.90)
   (D1.right out) .. controls +(300:0.2) and +(90:0.2) .. (a21.90)
   (D2.left out) .. controls +(240:0.2) and +(90:0.2) .. (a12.90)
   (D2.right out) .. controls +(300:0.2) and +(90:0.2) .. (a22.90)
   (D3.left out) .. controls +(240:0.2) and +(90:0.2) .. (a13.90)
   (D3.right out) .. controls +(300:0.2) and +(90:0.2) .. (a23.90);
   \end{tikzpicture}
    }
\end{center}
This is a string diagram picture of the following equation:
   \[ T x =
   \left( \begin{array}{ccc}
   3 & 7 & 2 \\
   9 & 1 & 0 \end{array} \right)
   \left( \begin{array}{c} x_1 \\ x_2 \\ x_3 \end{array} \right) =
   \left( \begin{array}{c} y_1 \\ y_2 \end{array} \right)
  \]

In general, given a \(k\)-linear transformation \(T \maps k^m \to k^n\), we can describe it using an
\(n \times m\) matrix with entries in \(k\).  The case where \(m\) and/or \(n\) is zero gives a
matrix with no entries, so their standard form will be treated separately.  For positive values of
\(m\) and \(n\), the standard form has three distinct layers.  The top layer consists of \(m\)
clusters of \(n-1\) instances of \(\Delta\).  The middle layer is \(mn\) multiplications.  The \(n\)
outputs of the \(j\)th cluster connect to the inputs of the multiplications \(\{a_{1j}, \dotsc,
a_{nj}\}\), where \(a_{ij}\) is the \(ij\) entry of \(A\), the matrix for \(T\).  The bottom layer
consists of \(n\) clusters of \(m-1\) instances of \(+\).  There will generally be braiding in this
layer as well, but since the category is being generated as symmetric monoidal, the locations of the
braidings doesn't matter so long as the topology of the string diagram is preserved.  The topology
of the sum layer is that the \(i\)th sum cluster gets its \(m\) inputs from the outputs of the
multiplications \(\{a_{i1}, \dotsc, a_{im}\}\).  The arrangement of the instances of \(\Delta\) and
\(+\) within their respective clusters does not matter, due to the associativity of \(+\) via
relation \textbf{(2)} and coassociativity of \(\Delta\) via relation \textbf{(5)}.  For the sake of
making the standard form explicit with respect to these relations, we may assume the right output of
a \(\Delta\) is always connected to a multiplication input, and the right input of a \(+\) is always
connected to a multiplication output.  This gives a prescription for drawing the standard form of a
string diagram with a corresponding matrix \(A\).

The standard form for \(T \maps k^0 \to k^n\) is \(n\) zeros (\(0 \oplus \dotsb \oplus 0\)), and the
standard form for \(T \maps k^m \to k^0\) is \(m\) deletions (\(! \oplus \dotsb \oplus\, !\)).

Each of the generating morphisms can easily be put into standard form: the string diagrams for zero,
deletion, and multiplication are already in standard form.  The string diagram for duplication
(resp. addition) can be put into standard form by attaching a multiplication by \(1\), relation
\textbf{(13)}, to each of the outputs (resp. inputs).
  \begin{center}
    \scalebox{0.80}{
   \begin{tikzpicture}[thick]
   \node[delta] (dupe) {};
   \node[multiply] (o1) at (-0.5,-0.65) {\(1\)};
   \node[multiply] (o2) at (0.5,-0.65) {\(1\)};
   \node (in) [above of=dupe] {};
   \node (posto1) [below of=o1] {};
   \node (posto2) [below of=o2] {};

   \draw[rounded corners] (posto1) -- (o1) -- (dupe.left out);
   \draw[rounded corners] (posto2) -- (o2) -- (dupe.right out);
   \draw (in) -- (dupe);

   \node (equals) at (-1,0) {\(=\)};
   \node[delta] (dub) at (-2,0) {};
   \node (dubin) [above of=dub] {};
   \node (L) at (-2.5,-0.65) {};
   \node (R) at (-1.5,-0.65) {};

   \draw (L) -- (dub.left out) (R) -- (dub.right out) (dubin) -- (dub);
   \end{tikzpicture}
\hspace{1cm}
   \begin{tikzpicture}[thick]
   \node[plus] (summer) {};
   \node[multiply] (o1) at (-0.5,1.02) {\(1\)};
   \node[multiply] (o2) at (0.5,1.02) {\(1\)};
   \node (out) [below of=summer] {};
   \node (posto1) [above of=o1] {};
   \node (posto2) [above of=o2] {};

   \draw (posto1) -- (o1) -- (o1.io) -- (summer.left in);
   \draw (posto2) -- (o2) -- (o2.io) -- (summer.right in);
   \draw (out) -- (summer);

   \node (equals) at (-1,0) {\(=\)};
   \node[plus] (adder) at (-2,0) {};
   \node (addout) [below of=adder] {};
   \node (L) at (-2.5,0.65) {};
   \node (R) at (-1.5,0.65) {};

   \draw (L) -- (adder.left in) (R) -- (adder.right in) (addout) -- (adder);
   \end{tikzpicture}
    }
  \end{center}
The braiding morphism is just as basic to our argument as the generating morphisms, so we will need
to write the string diagram for \(B\) in standard form as well.  The matrix corresponding to
braiding is
\[\left( \begin{array}{cc}
   0 & 1 \\
   1 & 0 \end{array} \right),\]
so its standard form is as follows:
  \begin{center}
    \scalebox{0.80}{
   \begin{tikzpicture}[thick]
   \node (UpUpLeft) at (-3.7,2.2) {};
   \node [coordinate] (UpLeft) at (-3.7,1.7) {};
   \node (mid) at (-3.3,1.3) {};
   \node [coordinate] (DownRight) at (-2.9,0.9) {};
   \node (DownDownRight) at (-2.9,0.4) {};
   \node [coordinate] (UpRight) at (-2.9,1.7) {};
   \node (UpUpRight) at (-2.9,2.2) {};
   \node [coordinate] (DownLeft) at (-3.7,0.9) {};
   \node (DownDownLeft) at (-3.7,0.4) {};

   \draw [rounded corners=2mm] (UpUpLeft) -- (UpLeft) -- (mid) --
   (DownRight) -- (DownDownRight) (UpUpRight) -- (UpRight) -- (DownLeft) -- (DownDownLeft);

   \node (eq) at (-2.2,1.3) {\(=\)};
   \node[coordinate] (i1) at (-1,3) {};
   \node[coordinate] (i2) at (1,3) {};
   \node[multiply] (a11) at (-1.5,1.832) {\(0\)};
   \node[multiply] (a21) at (-0.5,1.832) {\(1\)};
   \node[multiply] (a12) at (0.5,1.832) {\(1\)};
   \node[multiply] (a22) at (1.5,1.832) {\(0\)};
   \node[delta] (D1) at (-1,2.4) {};
   \node[delta] (D2) at (1,2.4) {};
   \node (cross) at (0,0.75) {};
   \node[plus] (S1) at (-0.5,0.2) {};
   \node[plus] (S2) at (0.5,0.2) {};
   \node[coordinate] (o1) at (-0.5,-0.4) {};
   \node[coordinate] (o2) at (0.5,-0.4) {};

   \draw
   (i1) -- (D1) (i2) -- (D2)
   (S1.io) -- (o1) (S2.io) -- (o2)
   (a11.io) -- (S1.left in) (a22.io) -- (S2.right in)
   (a21.io) -- (cross) -- (S2.left in) (a12.io) -- (S1.right in)
   (D1.left out) -- (a11) (D1.right out) -- (a21)
   (D2.left out) -- (a12) (D2.right out) -- (a22);
   \end{tikzpicture}
    }
\end{center}

For \(n > 1\), any morphism built from \(n\) copies of the \Define{basic morphisms}---that is,
generating morphisms and the braiding---can be built up from a morphism built from \(n-1\) copies by
composing or tensoring with one more basic morphism.  Thus, to prove that any string diagram built
from basic morphisms can be put into its standard form, we can proceed by induction on the number of
basic morphisms.

Furthermore, because strings can be extended using the identity morphism, relation \textbf{(13)} can
be used to show tensoring with any generating morphism is equivalent to tensoring with \(1\),
followed by a composition:  \(\Delta = \Delta \of 1\), \(+ = 1 \of +\), \(c = 1 \of c\), \(! =\, !
\of 1\), \(0 = 1 \of 0\).  In the case of braiding, the step of tensoring with \(1\) is repeated
once before making the composition:  \(B = (1 \oplus 1) \of B\).
  \begin{center}
    \scalebox{0.80}{
   \begin{tikzpicture}[thick]
   \node[blackbox] (bb1) {};
   \node (tensor1) [right of=bb1] {\(\oplus\)};
   \node[sqnode] (gen1) [right of=tensor1] {G};
   \node (eq1) [right of=gen1] {\(=\)};
   \node[blackbox] (bb2) [right of=eq1] {};
   \node (tensor2) [right of=bb2] {\(\oplus\)};
   \node[sqnode] (gen2) [right of=tensor2, shift={(0,1)}] {G};
   \node (eq2) [right of=gen2, shift={(0,-1)}] {\(=\)};
   \node[blackbox] (bb3) [right of=eq2] {};
   \node (tensor3) [right of=bb3] {\(\oplus\)};
   \node[sqnode] (gen3) [right of=tensor3, shift={(0,-1)}] {G};

   \node[multiply] (times2) [below of=gen2] {\(1\)};
   \node[multiply] (times3) [above of=gen3] {\(1\)};

   \node (bb1in) [above of=bb1] {};
   \node (bb1out) [below of=bb1] {};
   \node (bb2in) [above of=bb2] {};
   \node (bb2out) [below of=bb2] {};
   \node (bb3in) [above of=bb3] {};
   \node (bb3out) [below of=bb3] {};
   \node (gen1in) [above of=gen1] {};
   \node (gen1out) [below of=gen1] {};
   \node (gen2in) [above of=gen2] {};
   \node (gen2out) [below of=times2] {};
   \node (gen3in) [above of=times3] {};
   \node (gen3out) [below of=gen3] {};

   \draw (bb1in) -- (bb1) -- (bb1out) (gen1in) -- (gen1) -- (gen1out)
   (bb2in) -- (bb2) -- (bb2out) (gen2in) -- (gen2) -- (times2) -- (gen2out)
   (bb3in) -- (bb3) -- (bb3out) (gen3in) -- (times3) -- (gen3) -- (gen3out);
   \end{tikzpicture}
    }
  \end{center}
Thus there are 11 cases to consider for this induction:  \(\oplus 1\), \(+ \of\), \(\of \Delta\),
\(\Delta \of\), \(\of +\), \(\of c\), \(c \of\), \(\of 0\), \(! \of\), \(B \of\), \(\of B\).
Without loss of generality, the string diagram \(S\) to which a generating morphism is added will be
assumed to be in standard form already.  Labels \(ij\) on diagrams illustrating these cases
correspond to strings incident to the multiplications \(a_{ij}\).
     \begin{itemize}[leftmargin=1em]
\item \Define{\(\oplus 1\)}\\
When tensoring morphisms together, the matrix corresponding to \(C \oplus D\) is the block diagonal
matrix
\[\left( \begin{array}{cc}
   C & 0 \\
   0 & D \end{array} \right),\]
where, by abuse of notation, the block \(C\) is the matrix corresponding to morphism \(C\), and
respectively \(D\) with \(D\).  Thus, when tensoring \(S\) by \(1\), we write the matrix for \(S\)
with one extra row and one extra column.  Each of these new entries will be \(0\) with the exception
of a \(1\) at the bottom of the extra column.  The string diagram corresponding to the new matrix
can be drawn in standard form as prescribed above.  Using relations \textbf{(14)}, \textbf{(4)}, and
\textbf{(1)}, the standard form reduces to \(S \oplus 1\).  The process is reversable (\(\ev(f) =
\ev(g)\) implies \(\ev(g) = \ev(f)\)), so if the string diagram \(S\) can be drawn in standard form,
the string diagram \(S \oplus 1\) can be drawn in standard form, too.  The diagrams below show the
relevant strings before they are reduced.
  \begin{center}
    \scalebox{0.80}{
   \begin{tikzpicture}[thick]
   \node[delta] (D1) {};
   \node[delta] (D2) at (-0.5,-0.866) {};
   \node[delta] (D3) at (-1,-1.732) {};
   \node[multiply] (zero) at (0.5,-.65) {\(0\)};
   \node (i1) [above of=D1] {};
   \node (o1) at (0.5,-1.516) {\(\quad\scriptstyle{n+1,j}\)};
   \node (o2) at (0,-1.516) {\(\scriptstyle{nj}\)};
   \node (o3) at (-0.5,-2.382) {\(\scriptstyle{2j}\)};
   \node (o4) at (-1.5,-2.382) {\(\scriptstyle{1j}\)};

   \draw (i1) -- (D1) (D1.left out) -- (D2.io) (D1.right out) -- (zero)
   -- (o1) (D2.right out) -- (o2) (D3.right out) -- (o3) (D3.left out) -- (o4);
   \draw[dotted] (D2.left out) -- (D3.io);
   \end{tikzpicture}
       \hspace{1cm}
   \begin{tikzpicture}[thick]
   \node[delta] (D1) {};
   \node[delta] (D2) at (-0.5,-0.866) {};
   \node[delta] (D3) at (-1,-1.732) {};
   \node[multiply] (one) at (0.5,-.65) {\(1\)};
   \node (o1) at (0.5,-1.516) {\(\quad\qquad\scriptstyle{n+1,m+1}\)};
   \node (i1) [above of=D1] {};
   \node[multiply] (z2) at (0,-1.516) {\(0\)};
   \node (o2) at (0,-2.382) {\(\qquad\scriptstyle{n,m+1}\)};
   \node[multiply] (z3) at (-0.5,-2.382) {\(0\)};
   \node (o3) at (-0.5,-3.248) {\(\quad\scriptstyle{2,m+1}\)};
   \node[multiply] (z4) at (-1.5,-2.382) {\(0\)};
   \node (o4) at (-1.5,-3.248) {\(\scriptstyle{1,m+1}\)};

   \draw (i1) -- (D1) (D1.left out) -- (D2.io) (D1.right out) -- (one)
   -- (o1) (D2.right out) -- (z2) -- (o2) (D3.right out) -- (z3) --
   (o3) (D3.left out) -- (z4) -- (o4);
   \draw[dotted] (D2.left out) -- (D3.io);
   \end{tikzpicture}
       \hspace{1cm}
   \begin{tikzpicture}[thick]
   \node[plus] (P1) {};
   \node[plus] (P2) at (-0.5,0.866) {};
   \node[plus] (P3) at (-1,1.732) {};
   \node (zero) at (0.5,.65) {\(\quad\scriptstyle{i,m+1}\)};
   \node (o1) [below of=P1] {};
   \node (i2) at (0,1.516) {\(\scriptstyle{im}\)};
   \node (i3) at (-0.5,2.382) {\(\scriptstyle{i2}\)};
   \node (i4) at (-1.5,2.382) {\(\scriptstyle{i1}\)};

   \draw (o1) -- (P1) (P1.left in) -- (P2.io) (P1.right in) -- (zero)
   (P2.right in) -- (i2) (P3.right in) -- (i3) (P3.left in) -- (i4);
   \draw[dotted] (P2.left in) -- (P3.io);
   \end{tikzpicture}
    }
  \end{center}
Note that for \(i = n+1\) the multiplications \(a_{i2}, \dotsc, a_{im}\) going to the sum cluster
will be multiplication by zero, and \(a_{i,m+1} = 1\).  Otherwise \(a_{i,m+1} = 0\), and the rest
depend on the matrix corresponding to \(S\).  When \(S = (! \oplus \dotsb \oplus !)\), the matrix
corresponding to \(S \oplus 1\) has a single row, \((0 \cdots 0 \; 1)\), and the standard form
generated is just the middle diagram above.  When the same simplifications are applied, no sum
cluster exists to eliminate the zeros, so the standard form still simplifies to \(S \oplus 1\).
Dually, when \(S = (0 \oplus \dotsb \oplus 0)\), the matrix representation of \(S \oplus 1\) is a
column matrix.  No duplication cluster exists in the standard form for this matrix, so the same
simplifications again reduce to \(S \oplus 1\).
\item \Define{\(+ \of\)}\\
If we compose the string diagram for addition with \(S\), first consider only the affected clusters
of additions: two clusters are combined into a larger cluster.  Without loss of generality we can
assume these are the first two clusters, or formally, \((+ \oplus 1^{n-2})(S)\).  We can rearrange
the sums using the associative law, relation \textbf{(2)}, and permute the inputs of this
large cluster using the commutative law, relation \textbf{(3)}.  After several iterations of these
two relations, the desired result is obtained:
  \begin{center}
    \scalebox{0.80}{
   \begin{tikzpicture}[thick]
   \node[plus] (base)                {};
   \node (out) [below of=base]       {};
   \node (Laim)     at (-0.25,0.217) {};
   \node (Lbase)    at (-0.87,0.433) {};
   \node[plus] (L1) at (-1.0,0.866)  {};
   \node[plus] (L2) at (-1.5,1.732)  {};
   \node[plus] (L3) at (-2.0,2.598)  {};
   \node (Raim)     at (0.25,0.217)  {};
   \node (Rbase)    at (0.87,0.433)  {};
   \node[plus] (R1) at (1.0,0.866)   {};
   \node[plus] (R2) at (0.5,1.732)   {};
   \node[plus] (R3) at (0.0,2.598)   {};
   \node (iL1)      at (-0.5,1.516)  {\(\scriptstyle{1m}\)};
   \node (iL2)      at (-1.0,2.382)  {};
   \node (iL3)      at (-1.5,3.248)  {\(\scriptstyle{12}\)};
   \node (iL4)      at (-2.5,3.248)  {\(\scriptstyle{11}\)};
   \node (iR1)      at (1.5,1.516)   {\(\scriptstyle{2m}\)};
   \node (iR2)      at (1.0,2.382)   {};
   \node (iR3)      at (0.5,3.248)   {\(\scriptstyle{22}\)};
   \node (iR4)      at (-0.5,3.248)  {\(\scriptstyle{21}\)};

   \draw (base.left in) .. controls (Laim) and (Lbase) .. (L1.io);
   \draw (base.right in) .. controls (Raim) and (Rbase) .. (R1.io);
   \draw (R1.left in) -- (R2.io) (L1.left in) -- (L2.io) (out) -- (base)
   (L1.right in) -- (iL1) (L2.right in) -- (iL2) (L3.right in) -- (iL3) (L3.left in) -- (iL4)
   (R1.right in) -- (iR1) (R2.right in) -- (iR2) (R3.right in) -- (iR3) (R3.left in) -- (iR4);
   \draw[dotted] (R2.left in) -- (R3.io) (L2.left in) -- (L3.io);

   \node (eq) at (2.5,1.3) {\(=\)};

   \node[plus] (base2) at (6,0)       {};
   \node (basement) [below of=base2]  {};
   \node[plus] (R)     at (7,0.866)   {};
   \node[plus] (L)     at (5,0.866)   {};
   \node[plus] (LR)    at (6,1.732)   {};
   \node[plus] (LL)    at (4,1.732)   {};
   \node[plus] (LLL)   at (3,2.598)   {};
   \node[plus] (LLR)   at (5,2.598)   {};
   \node (iRl)         at (6.5,1.516) {\(\scriptstyle{1m}\)};
   \node (iRr)         at (7.5,1.516) {\(\scriptstyle{2m}\)};
   \node (iLRl)        at (5.5,2.382) {};
   \node (iLRr)        at (6.5,2.382) {};
   \node (iLLRl)       at (4.5,3.248) {\(\scriptstyle{12}\)};
   \node (iLLRr)       at (5.5,3.248) {\(\scriptstyle{22}\)};
   \node (iLLLl)       at (2.5,3.248) {\(\scriptstyle{11}\)};
   \node (iLLLr)       at (3.5,3.248) {\(\scriptstyle{21}\)};

   \draw (base2.left in) .. controls (5.75,0.217) and (5.13,0.433) .. (L.io)
   (base2.right in) .. controls (6.25,0.217) and (6.87,0.433) .. (R.io)
   (base2) -- (basement) (R.right in) -- (iRr) (R.left in) -- (iRl)
   (LR.right in) -- (iLRr) (LR.left in) -- (iLRl)
   (LLR.right in) -- (iLLRr) (LLR.left in) -- (iLLRl)
   (LLL.right in) -- (iLLLr) (LLL.left in) -- (iLLLl)
   (L.right in) .. controls (5.25,1.083) and (5.87,1.299) .. (LR.io)
   (LL.left in) .. controls (3.75,1.949) and (3.13,2.165) .. (LLL.io)
   (LL.right in) .. controls (4.25,1.949) and (4.87,2.165) .. (LLR.io);
   \draw[dotted] (L.left in) .. controls (4.75,1.083) and (4.13,1.299) .. (LL.io);
   \end{tikzpicture}
    }
  \end{center}
Now the right side of relation \textbf{(12)} appears in the diagram \(m\) times with \(a_{1j}\) and
\(a_{2j}\) in place of \(b\) and \(c\).  Relation \textbf{(12)} can therefore be used to simplify
to the multiplications \(a_{1j}+a_{2j}\).
  \begin{center}
    \scalebox{0.80}{
   \begin{tikzpicture}[thick]
   \node[delta] (dub) {};
   \node (top) [above of=dub] {};
   \node[multiply] (a1j) at (-0.5,-0.65) {\(\!\scriptstyle{a_{1j}}\!\)};
   \node[multiply] (a2j) at (0.5,-0.65) {\(\!\scriptstyle{a_{2j}}\!\)};
   \node[plus] (sum) at (0,-1.516) {};
   \node (bottom) [below of=sum] {};

   \draw (top) -- (dub) (dub.left out) -- (a1j)
   (dub.right out) -- (a2j) (sum) -- (bottom)
   (a1j.io) .. controls (-0.47,-1.15) and (-0.25,-1.299) .. (sum.left in)
   (a2j.io) .. controls (0.47,-1.15) and (0.25,-1.299) .. (sum.right in);

   \node (eq) at (1.2,-0.65) {\(=\)};
   \node[multiply] (added) at (2.4,-0.65) {\(\!\!\!{}^{a_{1j}+a_{2j}}\!\!\!\)};
   \node (in) at (2.4,1) {};
   \node (out) at (2.4,-2.516) {};

   \draw (in) -- (added) -- (out);
   \end{tikzpicture}
    }
  \end{center}
The simplification removes one instance of \(\Delta\) from each of the \(m\) clusters of \(\Delta\)
and \(m\) instances of \(+\) from the large addition cluster.  There will remain \((m-1) + (m-1) +
(1) - (m) = m-1\) instances of \(+\), which is the correct number for the cluster.  I.e. the
composition has been reduced to standard form.
\\
The argument is vastly simpler if \(S = (0 \oplus \dotsb \oplus 0)\).  In that case relation
\textbf{(1)} deletes the addition and one of the \(0\) morphisms, and \(S\) is still in the same
form.
  \begin{center}
    \scalebox{0.80}{
   \begin{tikzpicture}[thick]
   \node [plus] (sum) at (0,0) {};
   \node [zero] (zero1) at (-0.5,0.65) {};
   \node [zero] (zero2) at (0.5,0.65) {};
   \node (eq) at (1,0.15) {\(=\)};
   \node [zero] (zero3) at (1.5,0.65) {};
   \node (sumout) at (0,-0.65) {};
   \node (zeroout) at (1.5,-0.65) {};

   \draw (zero1) -- (sum.left in) (sum.right in) -- (zero2) (sum.io)
   -- (sumout) (zero3) -- (zeroout);
   \end{tikzpicture}
    }
  \end{center}
\item \Define{\(\of \Delta\)}\\
The argument for \(S \of (\Delta \oplus 1^{m-2})\) is dual to the above argument, using the light
relations \textbf{(4)}, \textbf{(5)} and \textbf{(6)} instead of the dark relations \textbf{(1)},
\textbf{(2)} and \textbf{(3)}.
\item \Define{\(\Delta \of\)}\\
For \((\Delta \oplus 1^{n-1}) \of S\), relation \textbf{(7)} can be used iteratively to ``float''
the \(\Delta\) layer above each of the two \(+\) clusters formed by the first iteration.
  \begin{center}
    \scalebox{0.80}{
   \begin{tikzpicture}[thick]
   \node[plus] (base) {};
   \node[delta] (dub) at (0,-0.866) {};
   \node (oLa) at (-0.5,-1.516) {};
   \node (oRa) at (0.5,-1.516) {};
   \node (1ma) at (0.5,0.65) {\(\scriptstyle{1m}\)};
   \node[plus] (top) at (-0.5,0.866) {};
   \node (11a) at (-1,1.516) {\(\scriptstyle{11}\)};
   \node (12a) at (0,1.516) {\(\scriptstyle{12}\)};

   \draw[dotted] (base.left in) -- (top.io);
   \draw (oLa) -- (dub.left out) (oRa) -- (dub.right out) (dub) -- (base)
   (base.right in) -- (1ma) (top.right in) -- (12a) (top.left in) -- (11a);

   \node (eq1) at (0.875,0) {\(=\)};
   \node (oLb) at (1.625,-1.516) {};
   \node[plus] (baseLb) at (1.625,-0.866) {};
   \node[delta] (dubLb) at (1.625,0) {};
   \node[plus] (topb) at (1.625,0.866) {};
   \node (11b) at (1.125,1.516) {\(\scriptstyle{11}\)};
   \node (12b) at (2.125,1.516) {\(\scriptstyle{12}\)};
   \node (oRb) at (2.5,-1.516) {};
   \node[plus] (baseRb) at (2.5,-0.866) {};
   \node[delta] (dubRb) at (2.5,0) {};
   \node (1mb) at (2.5,0.65) {\(\scriptstyle{1m}\)};
   \node (crossb) at (2.0625,-0.433) {};

   \draw[dotted] (dubLb) -- (topb);
   \draw (11b) -- (topb.left in) (topb.right in) -- (12b) (dubRb) -- (1mb)
   (dubLb.right out) -- (crossb) -- (baseRb.left in) 
   (dubRb.left out) -- (baseLb.right in)
   (baseRb) -- (oRb) (baseLb) -- (oLb);
   \path (baseLb.left in) edge [bend left=30] (dubLb.left out)
   (baseRb.right in) edge [bend right=30] (dubRb.right out);

   \node (eq2) at (3.25,0) {\(=\)};
   \node (11c) at (4.125,1.516) {\(\scriptstyle{11}\)};
   \node[delta] (dubLc) at (4.125,0.866) {};
   \node[plus] (topLc) at (4.125,0) {};
   \node (ucross) at (4.5625,0.433) {};
   \node (12c) at (5,1.516) {\(\scriptstyle{12}\)};
   \node[delta] (dubRc) at (5,0.866) {};
   \node[plus] (topRc) at (5,0) {};
   \node[plus] (baseLc) at (4.625,-0.866) {};
   \node (oLc) at (4.625,-1.516) {};
   \node (1mc) at (6,1.516) {\(\scriptstyle{1m}\)};
   \node (dots) at (5.5,1.516) {\(\cdots\)};
   \node[delta] (dubmc) at (6,0.866) {};
   \node[plus] (baseRc) at (5.5,-0.866) {};
   \node (oRc) at (5.5,-1.516) {};

   \draw[dotted] (baseLc.left in) -- (topLc.io) (baseRc.left in) -- (topRc.io);
   \draw (oRc) -- (baseRc) (oLc) -- (baseLc) (dubmc) -- (1mc) (dubLc) -- (11c) (dubRc) -- (12c)
   (topLc.right in) -- (dubRc.left out) 
   (topRc.left in) -- (ucross) -- (dubLc.right out) 
   (baseLc.right in) -- (dubmc.left out)
   (baseRc.right in) .. controls (5.875,-0.433) and (6.625,0) .. (dubmc.right out);
   \path (topLc.left in) edge [bend left=30] (dubLc.left out)
   (topRc.right in) edge [bend right=30] (dubRc.right out);
   \end{tikzpicture}
    }
  \end{center}
Each of these instances of \(\Delta\) can pass through the multiplication layer to \(\Delta\)
clusters using relation \textbf{(17)}.
\\
As before, we consider the subcase \(S = (0 \oplus \dotsb \oplus 0)\) separately.  Relation
\textbf{(8)} removes the duplication and creates a new zero, so \(S\) remains in the same form.
\item \Define{\(\of +\)}\\
For \(S(+ \oplus 1^{m-1})\), the argument is dual to the previous one: relation \textbf{(7)} is used
to ``float'' the additions down, relation \textbf{(15)} sends the additions through the
multiplications, and relation \textbf{(9)} removes the addition and creates a new deletion in the
subcase \(S = (! \oplus \dotsb \oplus !)\).
\item \Define{\(\of c\)}\\
We can iterate relation \textbf{(17)} when a multiplication is composed on top, as in \(S(c \oplus
1^{m-1})\).
  \begin{center}
    \scalebox{0.80}{

   \begin{tikzpicture}[thick]
   \node[multiply] (topc) {\(c\)};
   \node (top1) at (0,0.866) {};
   \node[delta] (dub1) [below of=topc] {};
   \node[delta] (dub2) at (-0.5,-1.866) {};
   \node (11a) at (-1,-2.516) {\(\scriptstyle{11}\)};
   \node (21a) at (0,-2.516) {\(\scriptstyle{21}\)};
   \node (n1a) at (0.5,-1.65) {\(\scriptstyle{n1}\)};

   \draw[dotted] (dub1.left out) -- (dub2.io);
   \draw (top1) -- (topc) -- (dub1) (dub1.right out) -- (n1a)
   (dub2.left out) -- (11a) (dub2.right out) -- (21a);

   \node (eq) at (1,-1) {\(=\)};
   \node (top2) at (2.5,0.866) {};
   \node[delta] (delt1) at (2.5,-0.134) {};
   \node[delta] (delt2) at (2,-1) {};
   \node[multiply] (c1) at (1.5,-1.65) {\(c\)};
   \node[multiply] (c2) at (2.5,-1.65) {\(c\)};
   \node[multiply] (c3) at (3.5,-1.65) {\(c\)};
   \node (11b) at (1.5,-2.516) {\(\scriptstyle{11}\)};
   \node (21b) at (2.5,-2.516) {\(\scriptstyle{21}\)};
   \node (dots) at (3,-2.516) {\(\cdots\)};
   \node (n1b) at (3.5,-2.516) {\(\scriptstyle{n1}\)};

   \draw[dotted] (delt1.left out) -- (delt2.io);
   \draw (delt2.left out) -- (c1) -- (11b) (delt2.right out) -- (c2) -- (21b)
   (delt1.right out) -- (c3) -- (n1b) (top2) -- (delt1);
   \end{tikzpicture}
    }
  \end{center}
The double multiplications in the multiplication layer reduce to a single multiplication via
relation \textbf{(11)}, \(c \of a_{ij} = ca_{ij}\), which leaves the diagram in standard form.  The
composition does nothing when \(S = (! \oplus \dotsb \oplus !)\), due to relation \textbf{(18)}. 
\item \Define{\(c \of\)}\\
A dual argument can be made for \((c \oplus 1^{n-1}) \of S\) using relations \textbf{(15)},
\textbf{(11)} and \textbf{(16)}.
\item \Define{\(\of 0\)}\\
For \(S(0 \oplus 1^{m-1})\), relations \textbf{(8)} and \textbf{(16)} eradicate the first \(\Delta\)
cluster and all the multiplications incident to it, leaving behind \(n\) zeros.  Relation
\textbf{(1)} erases each of these zeros along with one addition per addition cluster, leaving a
diagram that is in standard form.
  \begin{center}
    \scalebox{0.80}{
   \begin{tikzpicture}[thick]
   \node[zero] (top) {};
   \node[delta] (apex) at (0,-0.866) {};
   \node[delta] (dub) at (-0.5,-1.732) {};
   \node (11a) at (-1,-2.382) {\(\scriptstyle{11}\)};
   \node (21a) at (0,-2.382) {\(\scriptstyle{21}\)};
   \node (n1a) at (0.5,-1.516) {\(\scriptstyle{n1}\)};

   \draw[dotted] (apex.left out) -- (dub.io);
   \draw (top) -- (apex) (apex.right out) -- (n1a)
   (dub.left out) -- (11a) (dub.right out) -- (21a);

   \node (eq) at (1,-1.3) {\(=\)};
   \node[zero] (z1) at (1.5,-0.866) {};
   \node[zero] (z2) at (2,-0.866) {};
   \node[zero] (z3) at (3,-0.866) {};
   \node (dots) at (2.5,-1.3) {\(\cdots\)};
   \node (11b) at (1.5,-1.732) {\(\scriptstyle{11}\)};
   \node (21b) at (2,-1.732) {\(\scriptstyle{21}\)};
   \node (n1b) at (3,-1.732) {\(\scriptstyle{n1}\)};

   \draw (z1) -- (11b) (z2) -- (21b) (z3) -- (n1b);
   \end{tikzpicture}
       \hspace{1cm}
   \begin{tikzpicture}[thick]
   \node[zero] (zip) {};
   \node[multiply] (c) [below of=zip] {\(\scriptstyle{a_{i1}}\)};
   \node (out) [below of=c] {};
   \node (eq) [right of=c] {\(=\)};
   \node[zero] (nada) [above right of=eq] {};
   \node (bot) [below of=nada] {};

   \draw (zip) -- (c) -- (out) (nada) -- (bot);
   \end{tikzpicture}
       \hspace{1cm}
   \begin{tikzpicture}[thick]
   \node[zero] (zip) at (-0.5,0.65) {};
   \node[plus] (topa) {};
   \node[plus] (mida) at (0.5,-0.866) {};
   \node[plus] (bota) at (1,-1.732) {};
   \node (outa) at (1,-2.382) {};
   \node (i2a) at (0.5,0.65) {\(\scriptstyle{i2}\)};
   \node (i3a) at (1,-0.216) {\(\scriptstyle{i3}\)};
   \node (ima) at (1.5,-1.082) {\(\scriptstyle{im}\)};

   \draw[dotted] (mida.io) -- (bota.left in);
   \draw (bota) -- (outa) (bota.right in) -- (ima)
   (mida.right in) -- (i3a) (mida.left in) -- (topa.io)
   (topa.left in) -- (zip) (topa.right in) -- (i2a);

   \node (eq) at (2,-0.95) {\(=\)};
   \node[plus] (top) at (2.75,-0.866) {};
   \node[plus] (bot) at (3.25,-1.732) {};
   \node (out) at (3.25,-2.382) {};
   \node (i2) at (2.25,-0.216) {\(\scriptstyle{i2}\)};
   \node (i3) at (3.25,-0.216) {\(\scriptstyle{i3}\)};
   \node (im) at (3.75,-1.082) {\(\scriptstyle{im}\)};

   \draw[dotted] (top.io) -- (bot.left in);
   \draw (bot.right in) -- (im) (top.right in) -- (i3)
   (top.left in) -- (i2) (bot) -- (out);
   \end{tikzpicture}
    }
  \end{center}
When \(S = (! \oplus \dotsb \oplus !)\), the zero annihilates one of the deletions via relation
\textbf{(10)}.
\item \Define{\(! \of\)}\\
A dual argument erases the indicated output for the composition \((! \oplus 1^{n-1}) \of S\) using
relations \textbf{(9)}, \textbf{(18)}, and \textbf{(4)}.  Again, relation \textbf{(10)} annihilates
the deletion and one of the zeros if \(S = (0 \oplus \dotsb \oplus 0)\).
\item \Define{\(B \of\)}\\
Since this category of string diagrams is symmetric monoidal, an appended braiding will naturally
commute with the addition cluster morphisms.  The principle that only the topology matters means the
composition \((B \oplus 1^{n-2}) \of S\) is in standard form.  Braiding will similarly commute with
deletion morphisms.
  \begin{center}
    \scalebox{0.80}{
   \begin{tikzpicture}[thick,node distance=0.5cm]
   \node [coordinate] (fstart) {};
   \node [coordinate] (ftop) [below of=fstart] {};
   \node (center) [below right of=ftop] {};
   \node [coordinate] (fout) [below right of=center] {};
   \node [bang] (fend) [below of=fout] {};
   \node [coordinate] (gtop) [above right of=center] {};
   \node [coordinate] (gstart) [above of=gtop] {};
   \node [coordinate] (gout) [below left of=center] {};
   \node [bang] (gend) [below of=gout] {};

   \draw [rounded corners=0.25cm] (fstart) -- (ftop) -- (center) --
   (fout) -- (fend) (gstart) -- (gtop) -- (gout) -- (gend);

   \node (eq1) [below right of=gtop, shift={(0.5,0)}] {\(=\)};
   \node [bang] (ftop1) [above right of=eq1, shift={(0.5,0)}] {};
   \node [coordinate] (fstart1) [above of=ftop1] {};
   \node (center1) [below right of=ftop1] {};
   \node [coordinate] (gtop1) [above right of=center1] {};
   \node [coordinate] (gstart1) [above of=gtop1] {};
   \node [coordinate] (gout1) [below left of=center1] {};
   \node [bang] (gend1) [below of=gout1] {};

   \draw [rounded corners=0.25cm] (fstart1) -- (ftop1)
   (gstart1) -- (gtop1) -- (gout1) -- (gend1);

   \node (eq2) [below right of=gtop1, shift={(0.35,0)}] {\(=\)};
   \node [bang] (big) [right of=eq2, shift={(0.2,-0.5)}] {};
   \node [bang] (bang) [right of=big] {};

   \draw (big) -- +(0,1) (bang) -- +(0,1);
   \end{tikzpicture}
    }
  \end{center}
\item \Define{\(\of B\)}\\
Composing with \(B\) on the top, braiding commutes with duplication, multiplication and zero, so \(S
\of (B \oplus 1^{m-2})\) almost trivially comes into standard form.  
\end{itemize}
\end{proof}

An interesting exercise is to use these relations to derive a relation that expresses the braiding
in terms of other basic morphisms.  One example of such a relation appeared in
Section~\ref{sigflow}.  Here is another:
\begin{center}
 \scalebox{0.75}{
   \begin{tikzpicture}[-, thick]
   \node (Lin) {};
   \node [delta] (Ldub) [below of=Lin] {};
   \node [coordinate] (ULang) [below of=Ldub, shift={(-0.5,0)}] {};
   \node [multiply] (Lmin) [below of=ULang, shift={(0,0.65)}] {\(\scriptstyle{-1}\)};
   \node [coordinate] (BLang) [below of=Lmin, shift={(0,0.35)}] {};
   \node [plus] (Lsum) [below of=BLang, shift={(0.5,0)}] {};
   \node [plus] (Usum) [below of=Ldub, shift={(0.9,0)}] {};
   \node [delta] (Bdub) [below of=Usum] {};
   \node [delta] (Rdub) [above of=Usum, shift={(0.9,0)}] {};
   \node (Rin) [above of=Rdub] {};
   \node [coordinate] (URang) [below of=Rdub, shift={(0.5,0)}] {};
   \node [multiply] (Rmin) [below of=URang, shift={(0,0.65)}] {\(\scriptstyle{-1}\)};
   \node [coordinate] (BRang) [below of=Rmin, shift={(0,0.35)}] {};
   \node [plus] (Rsum) [below of=BRang, shift={(-0.5,0)}] {};
   \node (Lout) [below of=Lsum] {};
   \node (Rout) [below of=Rsum] {};

   \draw (Lin) -- (Ldub) (Ldub.right out) -- (Usum.left in) (Usum) --
   (Bdub) (Bdub.left out) -- (Lsum.right in) (Lsum) -- (Lout)
   (Rin) -- (Rdub) (Rdub.left out) -- (Usum.right in) (Bdub.right out)
   -- (Rsum.left in) (Rsum) -- (Rout);
   \draw (Ldub.left out) .. controls +(240:0.5) and +(90:0.3) .. (Lmin.90)
   (Lmin.io) .. controls +(270:0.3) and +(120:0.5) .. (Lsum.left in)
   (Rdub.right out) .. controls +(300:0.5) and +(90:0.3) .. (Rmin.90)
   (Rmin.270) .. controls +(270:0.3) and +(60:0.5) .. (Rsum.right in);

   \node (eq) [left of=Lmin, shift={(-0.3,0)}] {\(=\)};

   \node (center) [left of=eq, shift={(-0.2,0)}] {};
   \node [coordinate] (ftop) [above left of=center, shift={(0.35,-0.35)}] {};
   \node (fstart) [above of=ftop, shift={(0,-0.5)}] {};
   \node [coordinate] (fout) [below right of=center, shift={(-0.35,0.35)}] {};
   \node (fend) [below of=fout, shift={(0,0.5)}] {};
   \node [coordinate] (gtop) [above right of=center, shift={(-0.35,-0.35)}] {};
   \node (gstart) [above of=gtop, shift={(0,-0.5)}] {};
   \node [coordinate] (gout) [below left of=center, shift={(0.35,0.35)}] {};
   \node (gend) [below of=gout, shift={(0,0.5)}] {};

   \draw [rounded corners=7pt] (fstart) -- (ftop) -- (center) --
   (fout) -- (fend) (gstart) -- (gtop) -- (gout) -- (gend);
   \end{tikzpicture}
}
  \end{center}
With a few more relations, \(\Vectk\) can be presented as merely a monoidal category.  Lafont
\cite{Lafont} did this in the special case where \(k\) is the field with two elements.

\section{A presentation of \(\Relk\)}
\label{finrel}

Now we give a presentation for the symmetric monoidal category \(\Relk\).   As we did in the
previous section for \(\Vectk\), we work in a strict version of the symmetric monoidal category
\(\Relk\).

\begin{lemma} \label{gensrk} 
For any field \(k\), the object \(k\) together with the morphisms:
\begin{itemize}
\item addition \(+ \maps k \oplus k \asrelto k\)
\item zero \(0 \maps \{0\} \asrelto k\)
\item duplication \(\Delta \maps k \asrelto k \oplus k\)
\item deletion \(! \maps k \asrelto \{0\}\)
\item multiplication \(c \maps k \asrelto k\) for any \(c \in k\)
\item cup \(\cup \maps k \oplus k \asrelto \{0\} \)
\item cap \(\cap \maps \{0\} \asrelto k \oplus k \)
\end{itemize}
generate \(\Relk\), the category of finite-dimensional vector spaces over \(k\) and linear
relations, as a symmetric monoidal category.
\end{lemma}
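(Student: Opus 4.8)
The plan is to reduce everything to Lemma~\ref{gensvk} plus one extra ingredient: the ability to ``bend wires'' with the cup and cap, which makes $\Relk$ compact closed with $k$ self-dual. As in the proof of Lemma~\ref{gensvk}, since every object of $\Relk$ is isomorphic to some $k^n$ it suffices to show that $k$, the seven listed morphisms, their tensor products and the braidings generate the full subcategory of $\Relk$ on the objects $k^n$. A morphism there is a linear relation $L \maps k^m \asrelto k^n$, i.e.\ a subspace $L \subseteq k^m \oplus k^n$, so I must exhibit each such $L$ as a composite of tensor products of generating morphisms and braidings.

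The key step is the \emph{span decomposition} of a linear relation. Put $d = \dim L$ and choose a linear isomorphism from $k^d$ onto $L$; composing it with the projections of $k^m \oplus k^n$ onto its two summands produces linear maps $f \maps k^d \to k^m$ and $g \maps k^d \to k^n$ whose pairing $k^d \to k^m \oplus k^n$ is injective with image $L$. Writing $f^\dagger \maps k^m \asrelto k^d$ for the converse relation $\{(f(x),x) : x \in k^d\}$, one has
\[ L \;=\; g \of f^\dagger, \]
since $g \of f^\dagger = \{(u,w) : \exists\, x \in k^d,\ u = f(x) \text{ and } w = g(x)\}$, which is exactly $L$. When $d = 0$, or $m = 0$, or $n = 0$, the same formula applies with the relevant one of $f$, $g$ being an iterated tensor product of copies of $0$ and the corresponding adjoint an iterated tensor product of copies of $!$; so the degenerate cases need no separate treatment.

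It remains to build $f$, $g$ and $f^\dagger$ from the generators. The maps $f$ and $g$ lie in $\Vectk$, so by Lemma~\ref{gensvk} they are built from scalar multiplications, $+$, $0$, $\Delta$, $!$ and braidings, all of which appear among (or are built from) our generators. For $f^\dagger$ I would invoke the compact-closed structure: letting $\cap_{k^d}$ and $\cup_{k^m}$ denote the ``multi-strand'' cap and cup assembled from $d$, respectively $m$, copies of $\cap$ and $\cup$ together with braidings, one checks directly from the defining subspaces that
\[ f^\dagger \;=\; (\mathrm{id}_{k^d} \tensor \cup_{k^m}) \of (\mathrm{id}_{k^d} \tensor f \tensor \mathrm{id}_{k^m}) \of (\cap_{k^d} \tensor \mathrm{id}_{k^m}), \]
which is just the picture of ``bending'' the string diagram for $f$ so that its inputs become outputs. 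Hence $f^\dagger$, and with it $L = g \of f^\dagger$, is built from the listed generators, and the subcategory they generate is all of $\Relk$. The one place demanding care is the bookkeeping for $\cap_{k^d}$ and $\cup_{k^m}$ together with the elementary verification that the bent diagram really computes the converse relation; by contrast no relations among the generators are used here, since establishing those is the content of Theorem~\ref{presrk}.
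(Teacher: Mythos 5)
Your argument is correct, but it takes a genuinely different route from the paper's. The paper presents a linear relation \(L \subseteq k^m \oplus k^n\) by \emph{equations}: it caps the \(n\) outputs to turn them into inputs, uses Lemma~\ref{gensvk} to build a \(\Vectk\) block computing \(r\) linear combinations of the \(m+n\) wires, and postcomposes with \(r\) copies of the cozero \(0^\dagger\) (zero bent around by a cup) to impose \(f_i = 0\); in other words it realizes \(L\) as the kernel of a linear map. You instead present \(L\) by a \emph{parametrization}: an isomorphism \(k^d \cong L\) gives a jointly injective span \(f \maps k^d \to k^m\), \(g \maps k^d \to k^n\) with \(L = g \of f^\dagger\), and \(f^\dagger\) is obtained by bending the string diagram for \(f\) with multi-strand caps and cups. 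Both decompositions are standard linear algebra and both yield generation. The paper's version has the side benefit of setting up exactly the ``prestandard form'' used later in the proof of Theorem~\ref{presrk}, while yours is the span-flavoured half of the span/cospan picture of \(\SV_k\) discussed in Section~\ref{conclusions}; your verification of \(L = g \of f^\dagger\) and of the bending formula is correct.

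One correction to the aside on degenerate cases: the adjoint of the zero map \(0^{\oplus m} \maps \{0\} \to k^m\) is the cozero \((0^\dagger)^{\oplus m} = \{(0,0)\}\), not the deletion \(!^{\oplus m} = \{(u,0) : u \in k^m\}\); likewise the adjoint of \(!^{\oplus d}\) is the codeletion \((!^\dagger)^{\oplus d}\), not a tensor power of \(0\). Substituting \(!^{\oplus m}\) for \(f^\dagger\) when \(d = 0\) would give \(g \of\, !^{\oplus m} = k^m \oplus \{0\} \neq \{0\} = L\). This slip does not damage the proof, because your general bending formula applied verbatim to \(f = 0^{\oplus m}\) (with \(\cap_{k^0}\) read as the empty diagram) produces the correct converse relation \((0^\dagger)^{\oplus m}\) --- so the degenerate cases really do need no separate treatment, just not for the reason you state.
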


\begin{proof}
A morphism of \(\Relk\), \(R\maps k^m \asrelto k^n\) is a subspace of \(k^m \oplus k^n \iso
k^{m+n}\).  It can be expressed as a system of \(k\)-linear equations in \(k^{m+n}\).
Lemma~\ref{gensvk} tells us any number of arbitrary \(k\)-linear combinations of the inputs may be
generated.  Any \(k\)-linear equation of those inputs can be formed by setting such a \(k\)-linear
combination equal to zero.  In particular, if caps are placed on each of the outputs to make them
inputs and all the \(k\)-linear combinations are set equal to zero, any \(k\)-linear system of
equations of the inputs and outputs can be formed.  Expressed in terms of string diagrams,
  \begin{center}
   \begin{tikzpicture}[-, thick, node distance=0.708cm]
   \node (mn) {\(k_{m+n}\)};
   \node (dots) [right of=mn] {\(\dots\)};
   \node (m1) [right of=dots] {\(k_{m+1}\)};
   \node [coordinate] (inbend1) [above of=m1] {};
   \node [coordinate] (inbend2) [above of=dots] {};
   \node [coordinate] (inbend3) [above of=mn] {};
   \node (blank) [left of=inbend3] {};
   \node [coordinate] (midbend3) [above of=blank] {};
   \node [coordinate] (midbend2) [above of=midbend3] {};
   \node [coordinate] (midbend1) [above of=midbend2] {};
   \node [coordinate] (outbend3) [left of=blank] {};
   \node [coordinate] (outbend2) [left of=outbend3] {};
   \node [coordinate] (outbend1) [left of=outbend2] {};

   \node (dota) [right of=mn, shift={(0.16,0)}] {};
   \node [coordinate] (inbenda2) [above of=dota] {};
   \node [coordinate] (midbenda2) [above of=midbend3, shift={(0,0.16)}] {};
   \node [coordinate] (outbenda2) [left of=outbend3, shift={(-0.16,0)}] {};
   \node (dotd) [right of=mn, shift={(-0.16,0)}] {};
   \node [coordinate] (inbendd2) [above of=dotd] {};
   \node [coordinate] (midbendd2) [above of=midbend3, shift={(0,-0.16)}] {};
   \node [coordinate] (outbendd2) [left of=outbend3, shift={(0.16,0)}] {};

   \draw[loosely dotted,out=-45,in=-135,relative]
   (dota) -- (inbenda2) to (midbenda2) to (outbenda2)
   (dotd) -- (inbendd2) to (midbendd2) to (outbendd2)
   (dots) -- (inbend2) to (midbend2) to (outbend2);

   \draw[out=-45,in=-135,relative]
   (m1) -- (inbend1) to (midbend1) to (outbend1)
   (mn) -- (inbend3) to (midbend3) to (outbend3);
   \end{tikzpicture}
\hspace{1cm}
   \begin{tikzpicture}[-, thick, node distance=1cm]
   \node (f_i) at (0,1.5) {\(f_i\)};
   \node [zero] (zero) at (0,0) {};
   \draw (f_i) -- (zero);
   \end{tikzpicture}
  \end{center}
The left diagram turns the \(n\) outputs into inputs by placing caps on all of them.  The morphism
zero gives the \(k\)-linear combination zero, so an arbitrary \(k\)-linear combination in
\(k^{m+n}\) is set equal to zero (\(f_i=0\)) via the cozero morphism.  These elements can be
combined with Lemma~\ref{gensvk} to express any system of \(k\)-linear equations in \(k^{m+n}\).
\end{proof}
Putting these elements together, taking the \(\Vectk\) portion as a black box and drawing a single
string to denote zero or more copies of \(k\), the picture is fairly simple:
  \begin{center}
   \begin{tikzpicture}[thick]
   \node [blackbox] (blackbox) {};
   \node [zero] (zilch) at (0,-0.7) {};
   \node (outs) at (0.7,-1) {};
   \node [coordinate] (capR) at (0.7,0.5) {};
   \node [coordinate] (capL) at (0.15,0.5) {};
   \node (ins) at (-0.15,1) {};

   \path (capL) edge[bend left=90] (capR);
   \draw (ins) -- (-0.15,0) (blackbox) -- (zilch) (capL) -- (0.15,0)
   (capR) -- (outs);
   \end{tikzpicture}
  \end{center}

To obtain a presentation of \(\Relk\) as a symmetric monoidal category, we need to find enough
relations obeyed by the generating morphisms listed in Lemma~\ref{gensrk}.  Relations \textbf{(1)--(18)} from
Theorem~\ref{presvk} still apply, but we need more.  

For convenience, in the list below we draw the adjoint of any generating morphism by rotating it by
\(180^\circ\).  It will follow from relations (19) and (20) that the cap is the adjoint of the cup,
so this convenient trick is consistent even in that case, where \emph{a priori} there might have
been an ambiguity.

\vskip 1em \noindent
\textbf{(19)--(20)} \(\cap\) and \(\cup\) obey the zigzag relations, and thus give a
\(\dagger\)-compact category:

\begin{center}
   \begin{tikzpicture}[-, thick, node distance=1cm]
   \node (zigtop) {};
   \node [coordinate] (zigincup) [below of=zigtop] {};
   \node [coordinate] (zigcupcap) [right of=zigincup] {};
   \node [coordinate] (zigoutcap) [right of=zigcupcap] {};
   \node (zigbot) [below of=zigoutcap] {};
   \node (equal) [right of=zigoutcap] {\(=\)};
   \node (mid) [right of=equal] {};
   \node (vtop) [above of=mid] {};
   \node (vbot) [below of=mid] {};
   \node (equals) [right of=mid] {\(=\)};
   \node [coordinate] (zagoutcap) [right of=equals] {};
   \node (zagbot) [below of=zagoutcap] {};
   \node [coordinate] (zagcupcap) [right of=zagoutcap] {};
   \node [coordinate] (zagincup) [right of=zagcupcap] {};
   \node (zagtop) [above of=zagincup] {};
   \path
   (zigincup) edge (zigtop) edge [bend right=90] (zigcupcap)
   (zigoutcap) edge (zigbot) edge [bend right=90] (zigcupcap)
   (vtop) edge (vbot)
   (zagincup) edge (zagtop) edge [bend left=90] (zagcupcap)
   (zagoutcap) edge (zagbot) edge [bend left=90] (zagcupcap);
   \end{tikzpicture}
    \end{center}
   
\vskip 1em \noindent
\textbf{(21)--(22)} \( (k, +, 0, +^\dagger, 0^\dagger)\) is a Frobenius monoid:

\begin{center}
 \scalebox{1}{
   \begin{tikzpicture}[thick]
   \node [plus] (sum1) at (0.5,-0.216) {};
   \node [coplus] (cosum1) at (1,0.216) {};
   \node [coordinate] (sum1corner) at (0,0.434) {};
   \node [coordinate] (cosum1corner) at (1.5,-0.434) {};
   \node [coordinate] (sum1out) at (0.5,-0.975) {};
   \node [coordinate] (cosum1in) at (1,0.975) {};
   \node [coordinate] (1cornerin) at (0,0.975) {};
   \node [coordinate] (1cornerout) at (1.5,-0.975) {};

   \draw[rounded corners] (1cornerin) -- (sum1corner) -- (sum1.left in)
   (1cornerout) -- (cosum1corner) -- (cosum1.right out);
   \draw (sum1.right in) -- (cosum1.left out)
   (sum1.io) -- (sum1out)
   (cosum1.io) -- (cosum1in);

   \node (eq1) at (2,0) {\(=\)};
   \node [plus] (sum2) at (3,0.325) {};
   \node [coplus] (cosum2) at (3,-0.325) {};
   \node [coordinate] (sum2inleft) at (2.5,0.975) {};
   \node [coordinate] (sum2inright) at (3.5,0.975) {};
   \node [coordinate] (cosum2outleft) at (2.5,-0.975) {};
   \node [coordinate] (cosum2outright) at (3.5,-0.975) {};

   \draw (sum2inleft) .. controls +(270:0.3) and +(120:0.15) .. (sum2.left in)
   (sum2inright) .. controls +(270:0.3) and +(60:0.15) .. (sum2.right in)
   (cosum2outleft) .. controls +(90:0.3) and +(240:0.15) .. (cosum2.left out)
   (cosum2outright) .. controls +(90:0.3) and +(300:0.15) .. (cosum2.right out)
   (sum2.io) -- (cosum2.io);

   \node (eq2) at (4,0) {\(=\)};
   \node [plus] (sum3) at (5.5,-0.216) {};
   \node [coplus] (cosum3) at (5,0.216) {};
   \node [coordinate] (sum3corner) at (6,0.434) {};
   \node [coordinate] (cosum3corner) at (4.5,-0.434) {};
   \node [coordinate] (sum3out) at (5.5,-0.975) {};
   \node [coordinate] (cosum3in) at (5,0.975) {};
   \node [coordinate] (3cornerin) at (6,0.975) {};
   \node [coordinate] (3cornerout) at (4.5,-0.975) {};

   \draw[rounded corners] (3cornerin) -- (sum3corner) -- (sum3.right in)
   (3cornerout) -- (cosum3corner) -- (cosum3.left out);
   \draw (sum3.left in) -- (cosum3.right out)
   (sum3.io) -- (sum3out)
   (cosum3.io) -- (cosum3in);
   \end{tikzpicture}
}
\end{center}

\vskip 1em \noindent
\textbf{(23)--(24)} \( (k, \Delta^\dagger, !^\dagger, \Delta, !) \) is a Frobenius monoid:

\begin{center}
 \scalebox{1}{
\begin{tikzpicture}[thick]
   \node [codelta] (sum1) at (0.5,-0.216) {};
   \node [delta] (cosum1) at (1,0.216) {};
   \node [coordinate] (sum1corner) at (0,0.434) {};
   \node [coordinate] (cosum1corner) at (1.5,-0.434) {};
   \node [coordinate] (sum1out) at (0.5,-0.975) {};
   \node [coordinate] (cosum1in) at (1,0.975) {};
   \node [coordinate] (1cornerin) at (0,0.975) {};
   \node [coordinate] (1cornerout) at (1.5,-0.975) {};

   \draw[rounded corners] (1cornerin) -- (sum1corner) -- (sum1.left in)
   (1cornerout) -- (cosum1corner) -- (cosum1.right out);
   \draw (sum1.right in) -- (cosum1.left out)
   (sum1.io) -- (sum1out)
   (cosum1.io) -- (cosum1in);

   \node (eq1) at (2,0) {\(=\)};
   \node [codelta] (sum2) at (3,0.325) {};
   \node [delta] (cosum2) at (3,-0.325) {};
   \node [coordinate] (sum2inleft) at (2.5,0.975) {};
   \node [coordinate] (sum2inright) at (3.5,0.975) {};
   \node [coordinate] (cosum2outleft) at (2.5,-0.975) {};
   \node [coordinate] (cosum2outright) at (3.5,-0.975) {};

   \draw (sum2inleft) .. controls +(270:0.3) and +(120:0.15) .. (sum2.left in)
   (sum2inright) .. controls +(270:0.3) and +(60:0.15) .. (sum2.right in)
   (cosum2outleft) .. controls +(90:0.3) and +(240:0.15) .. (cosum2.left out)
   (cosum2outright) .. controls +(90:0.3) and +(300:0.15) .. (cosum2.right out)
   (sum2.io) -- (cosum2.io);

   \node (eq2) at (4,0) {\(=\)};
   \node [codelta] (sum3) at (5.5,-0.216) {};
   \node [delta] (cosum3) at (5,0.216) {};
   \node [coordinate] (sum3corner) at (6,0.434) {};
   \node [coordinate] (cosum3corner) at (4.5,-0.434) {};
   \node [coordinate] (sum3out) at (5.5,-0.975) {};
   \node [coordinate] (cosum3in) at (5,0.975) {};
   \node [coordinate] (3cornerin) at (6,0.975) {};
   \node [coordinate] (3cornerout) at (4.5,-0.975) {};

   \draw[rounded corners] (3cornerin) -- (sum3corner) -- (sum3.right in)
   (3cornerout) -- (cosum3corner) -- (cosum3.left out);
   \draw (sum3.left in) -- (cosum3.right out)
   (sum3.io) -- (sum3out)
   (cosum3.io) -- (cosum3in);
\end{tikzpicture}
}
\end{center}

\vskip 1em \noindent
\textbf{(25)--(26)} The Frobenius monoid \( (k, +, 0, +^\dagger, 0^\dagger)\) is extra-special:

\begin{center}
\begin{tikzpicture}[thick]
   \node [plus] (sum) at (0.4,-0.5) {};
   \node [coplus] (cosum) at (0.4,0.5) {};
   \node [coordinate] (in) at (0.4,1) {};
   \node [coordinate] (out) at (0.4,-1) {};
   \node (eq) at (1.3,0) {\(=\)};
   \node [coordinate] (top) at (2,1) {};
   \node [coordinate] (bottom) at (2,-1) {};

   \path (sum.left in) edge[bend left=30] (cosum.left out)
   (sum.right in) edge[bend right=30] (cosum.right out);
   \draw (top) -- (bottom)
   (sum.io) -- (out)
   (cosum.io) -- (in);
\end{tikzpicture}
\qquad 
\qquad
\qquad
\begin{tikzpicture}[thick]
   \node [zero] (Bins) at (0,-0.35) {};
   \node [zero] (Tins) at (0,0.35) {};
   \node (eq) at (0.7,0) {\(=\)};
   \node [hole] at (0,-0.865) {};
   \draw (Tins) -- (Bins);
   \end{tikzpicture}
\end{center}

\vskip 1em \noindent
\textbf{(27)--(28)} The Frobenius monoid \( (k, \Delta^\dagger, !^\dagger, \Delta, !)\) is
extra-special:

\begin{center}
\begin{tikzpicture}[thick]
   \node [codelta] (sum) at (0.4,-0.5) {};
   \node [delta] (cosum) at (0.4,0.5) {};
   \node [coordinate] (in) at (0.4,1) {};
   \node [coordinate] (out) at (0.4,-1) {};
   \node (eq) at (1.3,0) {\(=\)};
   \node [coordinate] (top) at (2,1) {};
   \node [coordinate] (bottom) at (2,-1) {};

   \path (sum.left in) edge[bend left=30] (cosum.left out)
   (sum.right in) edge[bend right=30] (cosum.right out);
   \draw (top) -- (bottom)
   (sum.io) -- (out)
   (cosum.io) -- (in);
\end{tikzpicture}
\qquad 
\qquad
\qquad
\begin{tikzpicture}[thick]
   \node [bang] (Bins) at (0,-0.35) {};
   \node [bang] (Tins) at (0,0.35) {};
   \node (eq) at (0.7,0) {\(=\)};
   \node [hole] at (0,-0.865) {};
   \draw (Tins) -- (Bins);
   \end{tikzpicture}
\end{center}

\vskip 1em \noindent
\textbf{(29)} \(\cup\) with a factor of \(-1\) inserted can be expressed in terms of \(+\) and
\(0\):

\begin{center}
\scalebox{1}{
   \begin{tikzpicture}[thick]
   \node [multiply] (neg) at (0,0.1) {\(\scriptstyle{-1}\)};
   \node [coordinate] (cupInLeft) at (0,1) {};
   \node [coordinate] (Lcup) at (0,-0.5) {};
   \node [coordinate] (Rcup) at (1,-0.5) {};
   \node [coordinate] (cupInRight) at (1,1) {};
   \node (eq) at (1.7,0.1) {\(=\)};
   \node [coordinate] (SumLeftIn) at (2.25,1) {};
   \node [coordinate] (SumRightIn) at (3.25,1) {};
   \node [plus] (Sum) at (2.75,0) {};
   \node [zero] (coZero) at (2.75,-0.65) {};

   \draw (SumRightIn) .. controls +(270:0.5) and +(60:0.5) .. (Sum.right in)
      (SumLeftIn) .. controls +(270:0.5) and +(120:0.5) .. (Sum.left in);
   \draw (cupInLeft) -- (neg) -- (Lcup)
      (Rcup) -- (cupInRight)
      (Sum) -- (coZero);
   \path (Lcup) edge[bend right=90] (Rcup);
   \end{tikzpicture}
}
\end{center}

\vskip 1em \noindent
\textbf{(30)} \(\cap\) can be expressed in terms of \(\Delta\) and \(!\):

\begin{center}
\scalebox{1}{
   \begin{tikzpicture}[thick]
   \node (eq) at (0.2,-0.1) {\(=\)};
   \node [coordinate] (lcap) at (-1.5,0.5) {};
   \node [coordinate] (rcap) at (-0.5,0.5) {};
   \node [coordinate] (lcapbot) at (-1.5,-1) {};
   \node [coordinate] (rcapbot) at (-0.5,-1) {};
   \node [delta] (dub) at (1.25,0) {};
   \node [bang] (boom) at (1.25,0.65) {};
   \node [coordinate] (Leftout) at (0.75,-1) {};
   \node [coordinate] (Rightout) at (1.75,-1) {};

   \draw (dub.left out) .. controls +(240:0.5) and +(90:0.5) .. (Leftout)
      (dub.right out) .. controls +(300:0.5) and +(90:0.5) .. (Rightout);
   \draw (boom) -- (dub) (lcapbot) -- (lcap) (rcap) -- (rcapbot);
   \path (lcap) edge[bend left=90] (rcap);
   \end{tikzpicture}
}
\end{center}

\vskip 1em \noindent
\textbf{(31)} For any \(c \in k\) with \(c \ne 0\), scalar multiplication by \(c^{-1}\) is the
adjoint of scalar multiplication by \(c\):

\begin{center}
   \begin{tikzpicture}[thick]
   \node[upmultiply] (c) {\(c\)};
   \node[coordinate] (in1) [above of=c] {};
   \node[coordinate] (out1) [below of=c] {};

   \draw (in1) -- (c) -- (out1);

   \node (eq) [right of=c] {\(=\)};
   \node[multiply] (mult) [right of=eq, shift={(0.5,0)}] {\(c^{-1}\!\!\)};
   \node[coordinate] (in) [above of=mult] {};
   \node[coordinate] (out) [below of=mult] {};

   \draw (in) -- (mult) -- (out);
   \end{tikzpicture}
  \end{center}

\vskip 1em
Some curious identities can be derived from relations \textbf{(1)--(31)}, beyond those already
arising from \textbf{(1)--(18)}.  For example:

\vskip 1em \noindent
\textbf{(D1)--(D2)} Deletion and zero can be expressed in terms of other generating morphisms:
  \begin{center}
   \begin{tikzpicture}[thick]
   \node [bang] (bang) at (0.2,0) {};
   \node (eq1) at (1,1.15) {\(=\)};
   \node (rel1) at (1,0.8) {(27)};
   \node [bang] (buck) at (2,0) {};
   \node [codelta] (codub) at (2,0.65) {};
   \node [delta] (dub) at (2,1.65) {};
   \node (eq2) at (3,1.15) {\(=\)};
   \node (rel2) at (3,0.8) {(30)\({}^\dagger\)};
   \node [delta] (dupe) at (4,1) {};

   \path
   (codub.left in) edge[bend left=30] (dub.left out)
   (codub.right in) edge[bend right=30] (dub.right out);
   \draw
   (bang) -- +(up:2.3) (dub.io) -- (2,2.3) (dupe.io) -- (4,2.3)
   (codub.io) -- (buck)
   (dupe.left out) .. controls +(240:1.2) and +(300:1.2) .. (dupe.right out);
   \end{tikzpicture}
\hskip 2em
   \begin{tikzpicture}[thick]
   \node [zero] (Zero1) at (0,1) {};
   \node (eq1) at (0.8,-0.15) {\(=\)};
   \node (rel1) at (0.8,-0.5) {(28)};
   \node [bang] (cobang) at (1.6,1) {};
   \node [bang] (bang) at (1.6,0.2) {};
   \node [zero] (Zero2) at (1.6,-0.5) {};
   \node (eq2) at (2.4,-0.15) {\(=\)};
   \node (rel2) at (2.4,-0.5) {(14)};
   \node [multiply] (times) at (3.4,-0.15) {\(0\)};
   \node [bang] (cobuck) at (3.4,1) {};
   \node (eq3) at (4.4,-0.15) {\(=\)};
   \node (rel3) at (4.4,-0.5) {(D1)\({}^\dagger\)};
   \node [multiply] (oh) at (5.4,-0.5) {\(0\)};
   \node [codelta] (cod) at (5.4,0.2) {};

   \draw
   (Zero1) -- +(down:2.3) (cobang) -- (bang) (Zero2) -- +(down:0.8)
   (cobuck) -- (times) -- (3.4,-1.3) (cod.io) -- (oh) -- (5.4,-1.3)
   (cod.left in) .. controls +(120:1.2) and +(60:1.2) .. (cod.right in);
   \end{tikzpicture}
  \end{center}
This does not diminish the role of deletion and zero.  Indeed, regarding these generating morphisms
as superfluous buries some of the structure of \(\Relk\).

\vskip 1em \noindent
\textbf{(D3)} Addition can be expressed in terms of coaddition and scalar multiplication by \(-1\),
and the cup:
  \begin{center}
    \scalebox{1}{
   \begin{tikzpicture}[thick]
   \node [multiply] (neg1) at (0,0) {\(\scriptstyle{-1}\)};
   \node [coplus] (cosum1) at (1,-0.21) {};

   \draw
   (neg1) -- (0,1) (cosum1.io) -- (1,1)
   (cosum1.right out) .. controls +(300:0.5) and +(90:0.5) .. +(0.3,-1.3)
   (neg1.io) .. controls +(270:1) and +(240:0.5) .. (cosum1.left out);
   \node (eq2) at (2,-0.216) {\(=\)};
   \node (rel2) at (2,-0.566) {(29)};
   \node [plus] (sum3) at (3,-0.432) {};
   \node [zero] (coz3) at (3,-1) {};
   \node [coplus] (cosum3) at (3.5,0) {};

   \draw
   (sum3.left in) .. controls +(120:0.5) and +(270:0.5) .. +(-0.3,1.3)
   (sum3) -- (coz3) (sum3.right in) -- (cosum3.left out)
   (cosum3.right out) .. controls +(300:0.5) and +(90:0.5) .. +(0.3,-1.3)
   (cosum3.io) -- +(0,0.76);
   \node (eq3) at (4.5,-0.216) {\(=\)};
   \node (rel3) at (4.5,-0.566) {(21)};
   \node [plus] (sum4) at (5.5,0.159) {};
   \node [coplus] (cosum4) at (5.5,-0.591) {};
   \node [zero] (coz4) at (5.1,-1.35) {};

   \draw
   (sum4.io) -- (cosum4.io)
   (cosum4.left out) .. controls +(240:0.2) and +(90:0.25) .. (coz4)
   (sum4.left in) .. controls +(120:0.5) and +(270:0.5) .. +(-0.3,1)
   (sum4.right in) .. controls +(60:0.5) and +(270:0.5) .. +(0.3,1)
   (cosum4.right out) .. controls +(300:0.5) and +(90:0.5) .. +(0.3,-1);
   \node (eq4) at (6.5,-0.216) {\(=\)};
   \node (rel4) at (6.5,-0.566) {(1)\({}^\dagger\)};
   \node [plus] (sum5) at (7.5,-0.4) {};

   \draw
   (sum5.io) -- +(0,-0.5)
   (sum5.left in) .. controls +(120:0.5) and +(270:0.5) .. +(-0.3,1)
   (sum5.right in) .. controls +(60:0.5) and +(270:0.5) .. +(0.3,1);
   \end{tikzpicture}
    }
  \end{center}

\vskip 1em \noindent
\textbf{(D4)} Duplication can be expressed in terms of coduplication and the cap:
  \begin{center}
    \scalebox{1}{
   \begin{tikzpicture}[thick]
   \node [coordinate] (neg1) at (0,0) {};
   \node [codelta] (codub) at (1,0) {};

   \draw
   (neg1) -- (0,-0.91) (codub.io) -- (1,-0.91)
   (codub.right in) .. controls +(60:0.5) and +(270:0.5) .. +(0.3,1)
   (neg1) .. controls +(90:1) and +(120:0.5) .. (codub.left in);

   \node (eq) at (2,0) {\(=\)};
   \node [delta] (dub) at (3,0.2) {};

   \draw
   (dub.io) -- +(0,0.69)
   (dub.left out) .. controls +(240:0.5) and +(90:0.5) .. +(-0.3,-1)
   (dub.right out) .. controls +(300:0.5) and +(90:0.5) .. +(0.3,-1);
   \end{tikzpicture}
    }
  \end{center}
where the proof is similar to that of \textbf{(D3)}.

\vskip 1em \noindent
\textbf{(D5)--(D7)} We can reformulate the bimonoid relations (7)--(9) using adjoints:
  \begin{center}
    \scalebox{1}{
   \begin{tikzpicture}[thick, node distance=0.7cm]
   \node (in1z) {};
   \node (in2z) [right of=in1z, shift={(0.2,0)}] {};
   \node (in3z) [right of=in2z, shift={(0.45,0)}] {};
   \node [codelta] (nabzip) [below right of=in2z, shift={(0.1,-0.3)}] {};
   \node [plus] (add) [below left of=nabzip, shift={(0.05,-0.3)}] {};
   \node (outz) [below of=add] {};
   \node (equal) [below right of=nabzip, shift={(0.2,0)}] {\(=\)};
   \node [plus] (addl) [right of=equal, shift={(0.2,0)}] {};
   \node (cross) [above right of=addl, shift={(-0.1,0)}] {};
   \node [delta] (delta) [above left of=cross, shift={(0.1,0)}] {};
   \node (in1u) [above of=delta] {};
   \node [plus] (addr) [below right of=cross, shift={(-0.1,0)}] {};
   \node [codelta] (nablunzip) [below left of=addr, shift={(0.1,-0.3)}] {};
   \node (outu) [below of=nablunzip] {};
   \node (in2u) [right of=in1u, shift={(0.4,0)}] {};
   \node (in3u) [right of=in2u, shift={(0.1,0)}] {};

   \draw (in1z) -- (add.left in) (add) -- (outz) (in2z) --
   (nabzip.left in) (in3z) -- (nabzip.right in) (nabzip.io) -- (add.right in);
   \path
   (delta.left out) edge [bend right=30] (addl.left in);
   \draw (in1u) -- (delta) (delta.right out) -- (cross) -- (addr.left in);
   \draw (in2u) -- (addl.right in) (in3u) -- (addr.right in);
   \draw (addl.io) -- (nablunzip.left in) (addr.io) -- (nablunzip.right in)
   (nablunzip) -- (outu);
   \end{tikzpicture}
}
\end{center}
\begin{center}
\scalebox{1}{
   \begin{tikzpicture}[thick]
   \node [zero] (coz1) at (0,0) {};
   \node [delta] (dub) at (0.4,0.75) {};

   \draw
   (coz1) .. controls +(90:0.25) and +(240:0.2) .. (dub.left out)
   (dub.right out) .. controls +(300:0.5) and +(90:0.5) .. +(0.3,-1)
   (dub.io) -- +(0,0.5);

   \node (eq) at (1.4,0.552) {\(=\)};
   \node [zero] (coz2) at (2,0.867) {};
   \node [zero] (zero) at (2,0.237) {};

   \draw (coz2) -- +(0,0.6) (zero) -- +(0,-0.6);
   \end{tikzpicture}
\hskip 3em
   \begin{tikzpicture}[thick]
   \node [bang] (cobang1) at (0,0) {};
   \node [plus] (add) at (0.4,-0.75) {};

   \draw
   (cobang1) .. controls +(270:0.25) and +(120:0.2) .. (add.left in)
   (add.right in) .. controls +(60:0.5) and +(270:0.5) .. +(0.3,1)
   (add.io) -- +(0,-0.5);

   \node (eq) at (1.4,-0.552) {\(=\)};
   \node [bang] (cobang2) at (2,-0.867) {};
   \node [bang] (bang) at (2,-0.237) {};

   \draw (bang) -- +(0,0.6) (cobang2) -- +(0,-0.6);
   \end{tikzpicture}
    }
  \end{center}

\vskip 1em \noindent
\textbf{(D8)--(D9)} When \(c \ne 1\), we have:
  \begin{center}
   \begin{tikzpicture}[thick]
   \node [coplus] (cosum) at (0,1.8) {};
   \node [multiply] (times) at (0.38,0.95) {\(c\)};
   \node [plus] (sum) at (0,0) {};

   \draw
   (cosum.io) -- +(0,0.3) (sum.io) -- +(0,-0.3)
   (cosum.left out) .. controls +(240:0.5) and +(120:0.5) .. (sum.left in)
   (cosum.right out) .. controls +(300:0.15) and +(90:0.15) .. (times.90)
   (times.io) .. controls +(270:0.15) and +(60:0.15) .. (sum.right in);

   \node (eq) at (1.2,0.9) {\(=\)};
   \node [bang] (bang) at (1.8,1.3) {};
   \node [bang] (cobang) at (1.8,0.5) {};

   \draw (bang) -- +(0,1.02) (cobang) -- +(0,-1.02);
   \end{tikzpicture}
\hskip 3em
   \begin{tikzpicture}[thick]
   \node [delta] (dub) at (0,1.8) {};
   \node [multiply] (times) at (0.38,0.95) {\(c\)};
   \node [codelta] (codub) at (0,0) {};

   \draw
   (dub.io) -- +(0,0.3) (sum.io) -- +(0,-0.3)
   (dub.left out) .. controls +(240:0.5) and +(120:0.5) .. (codub.left in)
   (dub.right out) .. controls +(300:0.15) and +(90:0.15) .. (times.90)
   (times.io) .. controls +(270:0.15) and +(60:0.15) .. (codub.right in);

   \node (eq) at (1.2,0.9) {\(=\)};
   \node [zero] (coz) at (1.8,1.3) {};
   \node [zero] (zero) at (1.8,0.5) {};

   \draw (coz) -- +(0,1.02) (zero) -- +(0,-1.02);
   \end{tikzpicture}
  \end{center}
\noindent
We leave the derivation of \textbf{(D5)--(D9)} as exercises for the reader.

Next we show that relations \textbf{(1)--(31)} are enough to give a presentation of \(\Relk\) as a symmetric
monoidal category.  As before, we do this by giving a standard form that any morphism can be written
in and use induction to show that an arbitrary diagram can be rewritten in its standard form using
the given relations.

\begin{thm} \label{presrk} The symmetric monoidal category \(\Relk\) is presented by the object
\(k\), the morphisms given in Lemma~\ref{gensrk}, and relations {\bf (1)--(31)} as listed above.
\end{thm}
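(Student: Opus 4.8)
The plan is to mimic the proof of Theorem~\ref{presvk}: exhibit a \emph{standard form} into which every formal morphism can be rewritten using relations \textbf{(1)--(31)}, and arrange matters so that two standard forms which evaluate to the same linear relation are literally identical. Since by Lemma~\ref{gensrk} the listed morphisms generate \(\Relk\), and since every morphism of the relevant skeleton (objects the \(k^N\)) is the evaluation of some formal morphism, this suffices to establish completeness: given two formal morphisms with the same value we rewrite each into its (common) standard form.

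First I would fix the standard form. Relations \textbf{(19)--(20)} make the category \(\dagger\)-compact, and \textbf{(29)--(31)} together with \textbf{(1)--(18)} make the \(\dagger\) behave correctly on generators, so that ``rotating a box by \(180^\circ\)'' genuinely computes its adjoint. Hence via caps and cups we may convert any formal morphism \(R\maps k^m\asrelto k^n\) into a formal ``state'' \(\widetilde R\maps\{0\}\asrelto k^{m+n}\) and back. Such a state is a subspace \(V\subseteq k^{m+n}\), and the black-box picture after Lemma~\ref{gensrk} realizes it, for a suitable matrix \(A\) whose columns span \(V\), by feeding \(r=\dim V\) copies of the codeletion \(!^\dagger\) into a \(\Vectk\)-morphism \(k^r\to k^{m+n}\) put in the standard form of Theorem~\ref{presvk}. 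I would declare the standard form of \(R\) to be: \(r\) codeletions, then the matrix \(A\) in reduced column-echelon form (so the columns of \(A\) are the unique canonical basis of \(V\subseteq k^m\oplus k^n\)), then the zig-zag ``bends'' (each built from one cap, one cup, and relations \textbf{(19)--(20)}) that turn the first \(m\) of the \(m+n\) output legs back into inputs. Because a subspace has a unique reduced column-echelon basis matrix, two standard forms with the same value coincide on the nose; this gives the uniqueness half for free.

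The substance is then an induction on the number of generating morphisms, showing that every formal morphism can be rewritten into standard form. The \(\Vectk\)-fragment is already handled by Theorem~\ref{presvk}, so the genuinely new cases are: appending a \(\cap\), appending a \(\cup\), and re-normalizing after each old generator is appended in the presence of cups and caps. Appending a \(\cap\) merely introduces another bend. Appending a \(\cup\) to a standard form identifies two legs; I would slide the \(\cup\) upward through the bends via \textbf{(19)--(20)}, through the matrix layer using the \(\Vectk\) relations and \textbf{(29)--(30)} (which re-express \(\cup,\cap\) in terms of \(+,0\) and \(\Delta,!\)), and into the codeletion layer, where it either identifies two columns of \(A\), exposes a linear dependence that lets relation \textbf{(28)} delete a redundant codeletion and drop \(r\) by one, or creates an extra free leg absorbed by \textbf{(26)}. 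Concretely one must show \(\mathrm{SF}(A)\) and \(\mathrm{SF}(A')\) are provably equal whenever \(A\) and \(A'\) have the same column space, by realizing the elementary column operations diagrammatically: scaling a column by a nonzero \(c\) from the adjoint of \textbf{(18)} plus \textbf{(31)} (giving \(c\circ !^\dagger=!^\dagger\)); adding one column into another from the bimonoid relations \textbf{(7)--(9)} and the Frobenius relations \textbf{(21)--(22)}; swapping columns from the braiding (topology); and inserting or deleting a zero column harmlessly via \textbf{(26)} and \textbf{(28)}. Appending one of \(+,0,\Delta,!,c\) is then handled as in Theorem~\ref{presvk}, with the derived relations \textbf{(D1)--(D9)} used to push the cozeros and codeletions that arise past the Frobenius structure back into the two standardized layers.

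The main obstacle is exactly this last point: proving that the relations are strong enough to carry out Gaussian elimination inside a string diagram — that \(\mathrm{SF}(A)\sim\mathrm{SF}(A')\) whenever \(A,A'\) span the same subspace — and, more delicately, that \emph{imposing} a linear equation (appending a cup) can always be re-normalized, recognizing and removing a now-redundant codeletion whenever it kills a dimension. This is where the whole package is used at once: the two extra-special \(\dagger\)-Frobenius monoids \textbf{(21)--(28)}, the translation laws \textbf{(29)--(30)} between cup/cap and the (co)monoid operations, and the inverse law \textbf{(31)}; the Frobenius relations in particular let a comultiplication slide past a multiplication, which is the diagrammatic incarnation of pivoting. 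Everything else — the bookkeeping of the inductive cases and the reduction of the \(\Vectk\)-part — is routine given Theorem~\ref{presvk}.
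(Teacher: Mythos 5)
Your strategy is sound, but it is the formal dual of the one the paper actually uses, and the difference is worth spelling out. You present a linear relation as a \emph{span}: \(r\) codeletions feeding a matrix \(A\) whose columns span the subspace, normalized by reduced column-echelon form. The paper's ``prestandard form'' is the opposite (cospan) presentation: the inputs and the capped outputs all feed into a single \(\Vectk\) block whose \(r\) outputs are killed by cozeros, so the subspace is the solution set of \(r\) linear equations, normalized by row-reduced echelon form---which is exactly the form delivered by the proof of Lemma~\ref{gensrk}, so the paper's base cases come almost for free. The two choices trade difficulties. In the paper's kernel presentation, imposing a constraint (a cup or a cozero) is trivial---it just adds a row---and the hard inductive cases are the co-operations \(\Delta^\dagger\), \(+^\dagger\), \(\cap\) arriving on top of the block, which the paper handles with lengthy ``trickle-down'' arguments using the Frobenius relations \textbf{(21)}--\textbf{(24)} and the derived relations \textbf{(D5)}--\textbf{(D9)}. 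In your image presentation, adding freedom (a cap or a codeletion) is trivial---it adds a column---and the hard cases are exactly the constraint-imposing ones you flag, where an equation must be solved against the span and a redundant codeletion removed when the dimension drops. Since the relation set \textbf{(1)}--\textbf{(31)} is closed under \(\dagger\) (each relation is either self-adjoint or has a derivable adjoint via the zigzag relations), your dual argument can in principle be obtained by formally dualizing the paper's case analysis, and your uniqueness argument via the uniqueness of the reduced column-echelon basis mirrors the paper's appeal to row-reduced echelon form and elementary row operations.

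One imprecision to fix: your claim that appending \(+,0,\Delta,!,c\) ``is handled as in Theorem~\ref{presvk}'' is only true for postcomposition on output legs. Precomposition on an input leg passes through your bends and, by the zigzag relations, becomes the \emph{adjoint} generator acting on the state; in particular precomposing with \(0\) or \(\Delta\) becomes \(0^\dagger\) or \(\Delta^\dagger\) on legs of the span, which are constraint-imposing and land you back in the hard case rather than in \(\Vectk\) territory. These are subsumed by your cup analysis via \textbf{(29)}, \textbf{(30)} and their adjoints, so this is a gap in the bookkeeping rather than in the method, but the case list needs to reflect it before the induction is complete.
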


\begin{proof}
We prove this theorem by using the relations \textbf{(1)--(31)} to put any string diagram built from
the generating morphisms and braiding into a standard form, so that any two string diagrams
corresponding to the same morphism in \(\Relk\) have the same standard form.

As before, we induct on the number of \Define{basic morphisms} involved in a string diagram, where
the basic morphisms are the generating morphisms together with the braiding.  If we let \(R \maps
k^m \asrelto k^n\) be a morphism in \(\Relk\), we can build a string diagram \(S\) for \(R\) as in
Lemma~\ref{gensrk}.  Each output of \(S\) is capped, and, together with the inputs of \(S\), form
inputs for a \(\Vectk\) block, \(T\).  For some \(r \leq m+n\), there are \(r\) outputs of
\(T\)--linear combinations of the \(m+n\) inputs--each set equal to zero via \((0^\dagger)^r\).
When \(T\) is in standard form for \(\Vectk\), we say \(S\) is in \Define{prestandard form}, and can
be depicted as follows:
  \begin{center}
    \scalebox{1}{
   \begin{tikzpicture}[thick]
   \node [blackbox] (blackbox) {};
   \node [zero] (zilch) at (0,-0.7) {};
   \node (outs) at (0.7,-1) {};
   \node [coordinate] (capR) at (0.7,0.5) {};
   \node [coordinate] (capL) at (0.15,0.5) {};
   \node (ins) at (-0.15,1) {};

   \path (capL) edge[bend left=90] (capR);
   \draw (ins) -- (-0.15,0) (blackbox) -- (zilch) (capL) -- (0.15,0)
   (capR) -- (outs);
   \end{tikzpicture}
    }
  \end{center}
While the linear subspace of \(k^{m+n}\) defined by \(R\) is determined by a system of \(r\) linear
equations, the converse is not true, meaning there may be multiple prestandard string diagrams for a
single morphism \(R\).  The second stage of this proof collapses all the prestandard forms into a
standard form using some basic linear algebra.  The standard form will correspond to when the matrix
representation of \(T\) is written in row-reduced echelon form.  For this stage it will suffice to
show all the elementary row operations correspond to relations that hold between diagrams.  By
Theorem \ref{presvk}, an arbitrary \(\Vectk\) block can be rewritten in its standard form, so the
\(\Vectk\) blocks here need not be demonstrated in their standard form.
\\
When there is one basic morphism, there are eight cases to consider, one per basic morphism.  In
each of these basic cases, the block of the diagram equivalent to a morphism in \(\Vectk\) is
denoted by a dashed rectangle.  We first consider \(\cup\).

\vskip 1em \noindent
\textbf{(D10)}
  \begin{center}
    \scalebox{1}{
   \begin{tikzpicture}[thick]
   \node (Lin0) at (0,1) {};
   \node [coordinate] (Lcup) at (0,0) {};
   \node [coordinate] (Rcup) at (1,0) {};
   \node (Rin0) at (1,1) {};
   \node (eq1) at (1.75,0.25) {\(=\)};
   \node (rel1) at (1.75,-0.15) {(13)};
   \node (rel2) at (1.75,-0.65) {(11)};
   \node (Lin1) at (2.75,1.7) {};
   \node [multiply] (neg1a) at (2.75,0.9) {\(\scriptstyle{-1}\)};
   \node [multiply] (neg1b) at (2.75,-0.1) {\(\scriptstyle{-1}\)};
   \node (Rin1) at (3.75,1.7) {};
   \node (eq2) at (4.25,0.25) {\(=\)};
   \node (rel3) at (4.25,-0.15) {(29)};
   \node (Lin4) at (5.35,1.7) {};
   \node [multiply] (neg) at (5.35,0.7) {\(\scriptstyle{-1}\)};
   \node (Rin4) at (6.35,1.7) {};
   \node [coordinate] (Rcor4) at (6.35,0.15) {};
   \node [plus] (S4) at (5.85,-0.5) {};
   \node [zero] (coZ4) at (5.85,-1.15) {};

   \draw[rounded corners]
   (Rin4) -- (Rcor4) -- (S4.right in);
   \draw (Lin0) -- (Lcup) (Rcup) -- (Rin0)
   (Lin1) -- (neg1a) -- (neg1b)
   (neg1b.io) .. controls +(270:0.5) and +(270:0.5) .. +(1,0) -- (Rin1)
   (Lin4) -- (neg) (neg.io) -- (S4.left in) (S4) -- (coZ4);
   \draw [color=red, dashed, thin] (4.75,1.2) rectangle (6.75,-0.9);
   \path (Lcup) edge[bend right=90] (Rcup);
   \end{tikzpicture}
    }
  \end{center}
Capping each of the inputs turns this into the standard form of \(\cap\).  Aside from deletion, the
remaining generating morphisms can be formed by introducing a zigzag at each output and rewriting
the resulting cups as above.  The standard forms for \(0\) and \(!\) have simpler expressions.
  \begin{center}
    \scalebox{0.80}{
   \begin{tikzpicture}[thick]
   \node (Lout0) at (-2.75,-0.5) {};
   \node [coordinate] (Lcap0) at (-2.75,0.5) {};
   \node [coordinate] (Rcap0) at (-1.75,0.5) {};
   \node (Rout0) at (-1.75,-0.5) {};
   \node (eq) at (-1,0.2) {\(=\)};
   \node [coordinate] (Lin) at (0,1.4) {};
   \node [multiply] (neg) at (0,0.7) {\(\scriptstyle{-1}\)};
   \node [coordinate] (Rin) at (1,1.4) {};
   \node [coordinate] (Rcor) at (1,0.15) {};
   \node [plus] (S) at (0.5,-0.5) {};
   \node [zero] (coZ) at (0.5,-1.15) {};
   \node [coordinate] (Rcap) at (2.5,1.4) {};
   \node [coordinate] (Lcap) at (1.5,1.4) {};
   \node (Rout) at (2.5,-1.3) {};
   \node (Lout) at (1.5,-1.3) {};

   \draw (Lout0) -- (Lcap0) (Rcap0) -- (Rout0) (Lin) -- (neg) (S) -- (coZ)
   (neg.io) -- (S.left in) (Rcap) -- (Rout) (Lcap) -- (Lout);
   \path (Lcap0) edge[bend left=90] (Rcap0) (Lin) edge[bend left=90]
   (Rcap) (Rin) edge[bend left=90] (Lcap);
   \draw[rounded corners] (Rin) -- (Rcor) -- (S.right in);
   \draw[color=red, dashed, thin] (-0.5,1.2) rectangle (1.25,-0.9);
   \end{tikzpicture}
       \hspace{0.69cm}
   \begin{tikzpicture}[thick]
   \node (in) at (-1.8,1.7) {};
   \node [multiply] (times) at (-1.8,0.2) {\(\scriptstyle{c}\)};
   \node (out) at (-1.8,-1.3) {};
   \node (eq) at (-1,0.2) {\(=\)};
   \node [coordinate] (Lin) at (0,1.7) {};
   \node [multiply] (neg) at (0,0.7) {\(\scriptstyle{-c}\)};
   \node [coordinate] (Rin) at (1,1.4) {};
   \node [coordinate] (Rcor) at (1,0.15) {};
   \node [plus] (S) at (0.5,-0.5) {};
   \node [zero] (coZ) at (0.5,-1.15) {};
   \node [coordinate] (Lcap) at (1.5,1.4) {};
   \node (Lout) at (1.5,-1.3) {};

   \draw (in) -- (times) -- (out) (Lin) -- (neg) (S) -- (coZ)
   (neg.io) -- (S.left in) (Lcap) -- (Lout);
   \path (Rin) edge[bend left=90] (Lcap);
   \draw[rounded corners] (Rin) -- (Rcor) -- (S.right in);
   \draw[color=red, dashed, thin] (-0.5,1.2) rectangle (1.25,-0.9);
   \end{tikzpicture}
       \hspace{0.69cm}
   \begin{tikzpicture}[thick]
   \node (in1) at (-2.3,0.85) {};
   \node (in2) at (-1.3,0.85) {};
   \node [plus] (S) at (-1.8,0.2) {};
   \node (out) at (-1.8,-0.45) {};
   \node (eq) at (-1,0.2) {\(=\)};
   \node [coordinate] (Lin) at (-0.5,1.016) {};
   \node (Linup) at (-0.5,1.7) {};
   \node [coordinate] (Rin) at (0.5,1.016) {};
   \node (Rinup) at (0.5,1.7) {};
   \node [multiply] (neg) at (1,0.7) {\(\scriptstyle{-1}\)};
   \node [coordinate] (Lcap) at (1,1.3) {};
   \node [plus] (Stop) at (0,0.366) {};
   \node [plus] (Sbot) at (0.5,-0.5) {};
   \node [zero] (coZ) at (0.5,-1.15) {};
   \node [coordinate] (Rcap) at (2,1.3) {};
   \node (Lout) at (2,-1.3) {};

   \draw[rounded corners] (Linup) -- (Lin) -- (Stop.left in)
   (Rinup) -- (Rin) -- (Stop.right in);
   \draw (in1) -- (S.left in) (S.right in) -- (in2) (S) -- (out)
   (Stop.io) -- (Sbot.left in)
   (Sbot) -- (coZ) (neg.io) -- (Sbot.right in) (neg) -- (Lcap)
   (Rcap) -- (Lout);
   \path (Lcap) edge[bend left=90] (Rcap);
   \draw[color=red, dashed, thin] (-0.65,1.2) rectangle (1.65,-0.9);
   \end{tikzpicture}
    }

    \scalebox{0.80}{
   \begin{tikzpicture}[thick]
   \node [zero] (zero1) at (-0.2,1) {};
   \node (eq) at (0.4,0) {\(=\)};
   \node [zero] (coz) at (1.55,-0.8) {};
   \node [hole] (placeholder) at (0,-1.9) {};

   \draw[color=red, dashed, thin] (2,0.5) rectangle (1.1,-0.5);
   \draw (zero1) -- +(down:2.2) (coz) -- (1.55,0.5)
   .. controls +(90:0.75) and +(90:0.75) .. (2.5,0.5) -- (2.5,-1.2);
   \end{tikzpicture}
       \hspace{0.7cm}
   \begin{tikzpicture}[thick]
   \node (out1) at (-2.3,0) {};
   \node (out2) at (-1.3,0) {};
   \node [delta] (D) at (-1.8,0.65) {};
   \node (in) at (-1.8,1.3) {};
   \node (eq) at (-1,0.65) {\(=\)};
   \node [coordinate] (in1) at (0,2.5) {};
   \node [multiply] (neg) at (0,1.7) {\(\scriptstyle{-1}\)};
   \node [delta] (D1) at (0,0.65) {};
   \node [plus] (S1) at (0,-0.65) {};
   \node [plus] (S2) at (1,-0.65) {};
   \node [zero] (coZ1) at (0,-1.3) {};
   \node [zero] (coZ2) at (1,-1.3) {};

   \node (cross) at (0.5,0) {};
   \node [coordinate] (Loutup) at (1,0.866) {};
   \node [coordinate] (Routup) at (1.5,0) {};
   \node [coordinate] (oLcap) at (1,2.3) {};
   \node [coordinate] (iLcap) at (1.5,2.3) {};
   \node [coordinate] (iRcap) at (2,2.3) {};
   \node [coordinate] (oRcap) at (2.5,2.3) {};
   \node (Lout) at (2,-1.3) {};
   \node (Rout) at (2.5,-1.3) {};

   \draw[rounded corners=10pt] (S1.right in) -- (Loutup) -- (oLcap)
   (iLcap) -- (Routup) -- (S2.right in);
   \draw (out1) -- (D.left out) (D.right out) -- (out2) (D) -- (in)
   (in1) -- (neg) -- (D1) (D1.right out) -- (cross) -- (S2.left in)
   (S2) -- (coZ2) (S1) -- (coZ1) (iRcap) -- (Lout) (oRcap) -- (Rout);
   \path (oLcap) edge[bend left=90] (oRcap) (iLcap) edge[bend left=90]
   (iRcap) (D1.left out) edge[bend right=30] (S1.left in);
   \draw[color=red, dashed, thin] (-0.65,2.2) rectangle (1.75,-1);
   \end{tikzpicture}
       \hspace{0.7cm}
   \begin{tikzpicture}[thick]
   \node [bang] (del1) at (0,-1) {};
   \node (eq) at (0.7,0) {\(=\)};
   \node (rel1) at (0.7,-0.4) {(26)};
   \node [bang] (del2) at (1.7,0.3) {};
   \node [zero] (zero) at (1.7,-0.3) {};
   \node [zero] (coz) at (1.7,-1) {};
   \node [hole] (placeholder) at (0,-1.9) {};

   \draw[color=red, dashed, thin] (1.25,-0.5) rectangle (2.15,0.5);
   \draw (del1) -- +(up:2) (del2) -- +(up:0.7) (coz) -- (zero);
   \end{tikzpicture}
    }
  \end{center}
Braiding is two copies of multiplication by 1 that have been braided together.
  \begin{center}
    \scalebox{0.80}{
   \begin{tikzpicture}[thick]
   \node (UpUpLeft) at (-3.7,2) {};
   \node [coordinate] (UpLeft) at (-3.7,1.5) {};
   \node (mid) at (-3.3,1.1) {};
   \node [coordinate] (DownRight) at (-2.9,0.7) {};
   \node (DownDownRight) at (-2.9,0.2) {};
   \node [coordinate] (UpRight) at (-2.9,1.5) {};
   \node (UpUpRight) at (-2.9,2) {};
   \node [coordinate] (DownLeft) at (-3.7,0.7) {};
   \node (DownDownLeft) at (-3.7,0.2) {};

   \draw [rounded corners=2mm] (UpUpLeft) -- (UpLeft) -- (mid) --
   (DownRight) -- (DownDownRight) (UpUpRight) -- (UpRight) -- (DownLeft) -- (DownDownLeft);

   \node (eq) at (-2.2,1.1) {\(=\)};
   \node [plus] (sum1) at (0,0) {};
   \node [plus] (sum2) at (0.7,0) {};
   \node [zero] (coz1) at (0,-0.65) {};
   \node [zero] (coz2) at (0.7,-0.65) {};
   \node [multiply] (neg1) at (-1,2.066) {\(\scriptstyle^{-1}\)};
   \node [multiply] (neg2) at (0,2.066) {\(\scriptstyle{-1}\)};

   \draw[color=red, dashed, thin] (-1.6,-0.3) rectangle (1.9,2.5);
   \draw (sum2.left in) .. controls +(120:1) and +(270:0.2) .. (neg2.io);
   \node [hole] (hole) at (0.35,0.39) {};
   \draw (sum1.right in) .. controls +(60:1) and +(270:1) .. (1,2.5)
   .. controls +(90:1) and +(90:1) .. (2.8,2.5) -- (2.8,-1)
   (sum2.right in) .. controls +(60:1) and +(270:1) .. (1.7,2.5)
   .. controls +(90:0.3) and +(90:0.3) .. (2.1,2.5) -- (2.1,-1)
   (sum1.io) -- (coz1) (sum2.io) -- (coz2)
   (sum1.left in) .. controls +(120:1) and +(270:0.2) .. (neg1.io)
   (neg1) -- (-1,3.5) (neg2) -- (0,3.5);
   \end{tikzpicture}
    }
  \end{center}
Assuming any string diagram with \(j\) basic morphisms can be written in prestandard form, we show
an arbitrary diagram with \(j+1\) basic morphisms can be written in prestandard form as well.  Let
\(S\) be a string diagram on \(j\) basic morphisms, rewritten into prestandard form, with a
maximal \(\Vectk\) subdiagram \(T\).  Several cases are considered: those putting a basic morphism
above \(S\), beside \(S\), and below \(S\).
     \begin{itemize}[leftmargin=1em]
     \item \Define{\(S \of G\) for a basic morphism \(G \neq \cap\)}\\
If a diagram \(G\) is composed above \(S\), \(G\) can combine with \(T\) to make a larger \(\Vectk\)
subdiagram if \(G\) is \(c\), \(\Delta\), \(+\), \(B\), or \(0\), as these are morphisms in
\(\Vectk\).  The generating morphisms \(\cap\), \(\cup\) and \(!\) are not on this list, though a
composition with \(\cup\) (resp. \(!\)) would be equivalent to tensoring by \(\cup\) (resp. \(!\)).
  \begin{center}
    \scalebox{0.80}{
   \begin{tikzpicture}[thick]
   \filldraw[fill=black,draw=black] (-0.1,0.1) rectangle (1.5,-0.8);
   \node (eq) at (2.6,0) {\(=\)};
   \node [sqnode] (G) at (0.7,0.8) {\(G\)};
   \filldraw[fill=black,draw=black] (3.2,0.1) rectangle (4.8,-0.8);
   \node [zero] (Z) at (0.7,-1.2) {};
   \node [zero] (ero) at (4,-1.2) {};
   \draw (0.2,0) -- (0.2,1.5) (0.7,1.5) -- (G) -- (0.7,0) (0.7,-0.7) -- (Z)
   (ero) -- (4,-0.7) (3.5,0) -- (3.5,1.5) (4,0) -- (4,1.5);
   \draw (4.5,0) .. controls +(90:0.6) and +(90:0.6) .. (5.2,0) -- (5.2,-1.7)
         (1.2,0) .. controls +(90:0.6) and +(90:0.6) .. (1.9,0) -- (1.9,-1.7);
   \end{tikzpicture}
    }\\
    \scalebox{0.80}{
for   \begin{tikzpicture}[thick]
   \node [sqnode] (G) at (0,0) {\(G\)};
   \draw (0,0.7) -- (G) -- (0,-0.7);
   \node (eq) at (0.7,0) {\(=\)};
   \end{tikzpicture}
   \begin{tikzpicture}[thick]
   \node [multiply] (c) at (0,0) {\(\scriptstyle{c}\)};
   \draw (0,0.55) -- (c) -- (0,-0.65);
   \end{tikzpicture}, 
   \begin{tikzpicture}[thick]
   \node [delta] (dub) at (0,0) {};
   \draw (0,0.6) -- (dub) (dub.left out) -- (-0.4,-0.5) (dub.right out) -- (0.4,-0.5);
   \end{tikzpicture} , 
   \begin{tikzpicture}[thick]
   \node [plus] (sum) at (0,0) {};
   \draw (0,-0.6) -- (sum) (sum.left in) -- (-0.4,0.5) (sum.right in) -- (0.4,0.5);
   \end{tikzpicture}, 
   \begin{tikzpicture}[thick]
   \draw (0.45,-0.45) -- (-0.45,0.45);
   \node [hole] (hole) at (0,0) {};
   \draw (0.45,0.45) -- (-0.45,-0.45);
   \end{tikzpicture} , or
   \begin{tikzpicture}[thick]
   \node [zero] (zero) at (0,0.4) {};
   \draw (0,-0.4) -- (zero);
   \end{tikzpicture} .
    }
  \end{center}
Putting these morphisms on top of \(S\) reduces to performing those compositions on \(T\).  The
maximal \(\Vectk\) subdiagram now includes \(T\) and \(G\), with \(S\) unchanged outside the
\(\Vectk\) block.
     \item \Define{\(B \of S\)}\\
\(B\) commutes with caps because the category is symmetric monoidal, so capping the braiding is
equivalent to putting the braiding on top of \(T\).  \(B\) is ``absorbed'' into \(T\), just as in
the \(S \of G\) case.
     \item \Define{\(S \oplus G\) for any basic morphism \(G\)}\\
If any two prestandard string diagrams \(S\) and \(S'\) are tensored together, the result combines
into one prestandard diagram.  This is evident because the category of string diagrams is symmetric
monoidal, and the \(\Vectk\) blocks can be placed next to each other as the tensor of two \(\Vectk\)
blocks.  These combine into a single \(\Vectk\) block, and absorbing all the braidings into this
block as above brings the diagram into prestandard form.  Since each basic morphism can be written
as a prestandard diagram, the tensor \(S \oplus G\) is a special case of this.
  \begin{center}
    \scalebox{0.80}{
   \begin{tikzpicture}[thick]
   \filldraw[fill=black,draw=black] (-0.1,0.1) rectangle (0.8,-0.8);
   \node (oplus) at (1.8,-0.4) {\(\bigoplus\)};
   \filldraw[fill=black,draw=black] (2.4,0.1) rectangle (3.3,-0.8);
   \node (eq) at (4.3,-0.4) {\(=\)};
   \filldraw[fill=black,draw=black] (4.9,0.1) rectangle (6.4,-0.8);
   \node [zero] (Z) at (0.35,-1.2) {};
   \node [zero] (e) at (2.85,-1.2) {};
   \node [zero] (ro) at (5.65,-1.2) {};
   \draw (0.2,0) -- (0.2,1) (0.35,-0.7) -- (Z) (ro) -- (5.65,-0.7)
   (e) -- (2.85,-0.7) (2.7,0) -- (2.7,1) (5.2,0) -- (5.2,1)
   (5.8,0) -- (5.8,1);
   \draw (3,0) .. controls +(90:0.6) and +(90:0.6) .. (3.7,0) -- (3.7,-1.7)
         (0.5,0) .. controls +(90:0.6) and +(90:0.6) .. (1.2,0) -- (1.2,-1.7)
         (6.1,0) .. controls +(90:0.9) and +(90:0.9) .. (7.2,0) -- (7.2,-1.7);
   \node [hole] (ho) at (5.8,0.6) {};
   \node [hole] (le) at (6.46,0.63) {};
   \draw (5.5,-0.1) .. controls +(90:1.1) and +(90:1.1) .. (6.8,-0.1) -- (6.8,-1.7);
   \end{tikzpicture}
    }
  \end{center}
     \item \Define{\(c \of S\) for \(c \neq 0\)}\\
Because the outputs of \(S\) are capped, putting any morphism on the bottom of \(S\) is equivalent
(via relations \textbf{(19)} and \textbf{(20)}) to putting its adjoint on top of \(T\).  Putting \(c
\neq 0\) below \(S\) reduces to putting \(c^{-1}\) on top of \(T\) by relation \textbf{(31)}.  The
case of \(c = 0\) will be considered below.  The other cases of adjoints of generating morphisms
that need to be considered more carefully are the ones that put \(\Delta^\dagger\), \(+^\dagger\)
and \(\cap = \cup^\dagger\) on top of \(T\).
  \begin{center}
    \scalebox{0.80}{
   \begin{tikzpicture}[thick]
   \filldraw[fill=black,draw=black] (-0.45,0.45) rectangle (0.45,-0.45);
   \node[multiply] (c) at (1.05,0) {\(c\)};
   \node[zero] (coz1) at (0,-0.8) {};

   \draw[out=-90,in=-90,relative,looseness=2]
   (c.io) -- (1.05,-1.85) (c.90) -- (1.05,0.45) to (0.15,0.45)
   (coz1) -- +(0,0.5) (-0.15,0.35) -- +(0,1.5);

   \node (eq) at (1.8,0) {\(=\)};
   \filldraw[fill=black,draw=black] (2.3,-0.05) rectangle (3.7,-0.95);
   \node[zero] (coz2) at (3,-1.3) {};
   \node[multiply] (cinv) at (3.4,0.8) {\(c^{-1}\!\!\)};

   \draw[out=90,in=90,relative,looseness=2]
   (2.6,-0.15) -- +(0,2) (cinv.io) -- +(0,-0.5) (coz2) -- +(0,0.5)
   (cinv.90) to +(0.9,0) -- (4.3,-1.85);
   \end{tikzpicture}
    }
  \end{center}
     \item \Define{\(\Delta \of S\)}\\
When putting \(\Delta^\dagger\) on top of \(T\), the idea is to make it ``trickle down.''  If there
is a nonzero multiplication incident to the \(\Delta\) cluster, \(\Delta^\dagger\) can slide through
the \(\Delta\)s using relation \textbf{(23)} to the first nonzero multiplication, switching to
relation \textbf{(24)}.  When it encounters this \(c\), relation \textbf{(31)} turns \(c\) into
\((c^{-1})^\dagger\), relation \textbf{(17)}\({}^\dagger\) allows \(\Delta^\dagger\) to pass through
\((c^{-1})^\dagger\).  Both copies of \((c^{-1})^\dagger\) can return to being \(c\) by another
application of relation \textbf{(31)}, and the \(\Delta^\dagger\) moves on to the next layer.
  \begin{center}
    \scalebox{0.80}{
   \begin{tikzpicture}[thick]
   \node[codelta] (cod1) at (0,0) {};
   \node[delta] (dub1) at (0,-0.734) {};
   \node[multiply] (zip1) at (0.5,-1.5) {\(0\)};

   \draw (cod1.left in) -- +(120:0.5) (cod1.right in) -- + (60:0.5) (cod1.io) -- (dub1.io)
   (dub1.right out) -- (zip1.90) (zip1.io) -- +(0,-0.3) (dub1.left out) -- +(240:0.5);

   \node (eq) at (1.3,-0.9) {\(=\)};
   \node (rel) at (1.3,-1.2) {(23)};
   \node[delta] (dub2) at (2.6,-0.468) {};
   \node[codelta] (cod2) at (2.1,-0.9) {};
   \node[multiply] (zip2) at (3.1,-1.234) {\(0\)};

   \draw (dub2.io) -- +(90:0.5) (dub2.right out) -- (zip2.90) (zip2.io) -- +(0,-0.3)
   (dub2.left out) -- (cod2.right in) (cod2.left in) -- +(120:0.5) (cod2.io) -- +(0,-0.5);
   \end{tikzpicture}
\qquad
   \begin{tikzpicture}[thick]
   \node[codelta] (cod1) at (0,0) {};
   \node[delta] (dub1) at (0,-0.734) {};
   \node[multiply] (zip1) at (0.5,-1.5) {\(c\)};

   \draw (cod1.left in) -- +(120:0.5) (cod1.right in) -- + (60:0.5) (cod1.io) -- (dub1.io)
   (dub1.right out) -- (zip1.90) (zip1.io) -- +(0,-0.3) (dub1.left out) -- +(240:0.5);

   \node (eq) at (1.3,-0.9) {\(=\)};
   \node (rel) at (1.3,-1.2) {(24)};
   \node[delta] (dub2) at (2.2,-0.268) {};
   \node[codelta] (cod2) at (2.7,-0.7) {};
   \node[multiply] (zip2) at (2.7,-1.5) {\(c\)};

   \draw (dub2.io) -- +(90:0.5) (dub2.left out) -- +(240:0.5) (cod2.io) -- (zip2.90)
   (zip2.io) -- +(0,-0.3) (dub2.right out) -- (cod2.left in) (cod2.right in) -- +(60:0.5);
   \end{tikzpicture}
    }
    \scalebox{0.80}{
   \begin{tikzpicture}[thick]
   \node[multiply] (c) at (0,0) {\(c\)};
   \node[codelta] (cod1) at (0,0.75) {};
   \node (eq1) at (0.8,0.55) {\(=\)};
   \node (rel1) at (0.8,0.25) {(31)\({}^\dagger\)};

   \draw (c.90) -- (cod1.io) (cod1.left in) -- +(120:0.5) (cod1.right in) -- +(60:0.5)
   (c.io) -- +(0,-0.3);

   \node[upmultiply] (cinv2) at (1.8,-0.1) {\(\!c^{-1}\!\)};
   \node[codelta] (cod2) at (1.8,1) {};
   \node (eq2) at (2.9,0.55) {\(=\)};
   \node (rel2) at (2.9,0.25) {(17)\({}^\dagger\)};

   \draw (cinv2.io) -- (cod2.io) (cod2.left in) -- +(120:0.5) (cod2.right in) -- +(60:0.5)
   (cinv2.270) -- +(0,-0.4);

   \node[upmultiply] (cinv3a) at (4,1.1) {\(\!c^{-1}\!\)};
   \node[upmultiply] (cinv3b) at (5.2,1.1) {\(\!c^{-1}\!\)};
   \node[codelta] (cod3) at (4.6,-0.15) {};
   \node (eq3) at (6.3,0.55) {\(=\)};
   \node (rel3) at (6.3,0.25) {(31)\({}^\dagger\)};

   \draw (cinv3a.io) -- +(0,0.4) (cinv3b.io) -- +(0,0.4) (cod3.io) -- +(0,-0.5)
   (cod3.left in) .. controls +(120:0.5) and +(270:0.3) .. (cinv3a.270)
   (cod3.right in) .. controls +(60:0.5) and +(270:0.3) .. (cinv3b.270);

   \node[multiply] (c4a) at (7,1.1) {\(c\)};
   \node[multiply] (c4b) at (8,1.1) {\(c\)};
   \node[codelta] (cod4) at (7.5,0.05) {};

   \draw (c4a.90) -- +(0,0.4) (c4b.90) -- +(0,0.4) (cod4.io) -- +(0,-0.5)
   (cod4.left in) -- (c4a.270) (cod4.right in) -- (c4b.270);
   \end{tikzpicture}
    }
  \end{center}
When the codelta gets to a \(+\) cluster, derived relation \textbf{(D5)} has a net effect of
bringing it to the bottom of the subdiagram, as the other morphisms involved all belong to
\(\Vectk\).  This allows the process to be repeated on the next addition until \(\Delta^\dagger\)
reaches the bottom of the \(+\) cluster.  Once there, codelta interacts with the cozero layer below
\(T\); relation \textbf{(8)}\({}^\dagger\) reduces it to a pair of cozeros.
  \begin{center}
    \scalebox{0.80}{
   \begin{tikzpicture}[-, thick, node distance=0.7cm]
   \node (in1z) {};
   \node (in2z) [right of=in1z, shift={(0.2,0)}] {};
   \node (in3z) [right of=in2z, shift={(0.45,0)}] {};
   \node [codelta] (nabzip) [below right of=in2z, shift={(0.1,-0.3)}] {};
   \node [plus] (add) [below left of=nabzip, shift={(0.05,-0.3)}] {};
   \node (outz) [below of=add] {};
   \node (equal) [below right of=nabzip, shift={(0.2,0)}] {\(=\)};
   \node (rel) [below of=equal, shift={(0,0.4)}] {(D5)};
   \node [plus] (addl) [right of=equal, shift={(0.2,0)}] {};
   \node (cross) [above right of=addl, shift={(-0.1,0)}] {};
   \node [delta] (delta) [above left of=cross, shift={(0.1,0)}] {};
   \node (in1u) [above of=delta] {};
   \node [plus] (addr) [below right of=cross, shift={(-0.1,0)}] {};
   \node [codelta] (nablunzip) [below left of=addr, shift={(0.1,-0.3)}] {};
   \node (outu) [below of=nablunzip] {};
   \node (in2u) [right of=in1u, shift={(0.4,0)}] {};
   \node (in3u) [right of=in2u, shift={(0.1,0)}] {};

   \draw (in1z) -- (add.left in) (add) -- (outz) (in2z) --
   (nabzip.left in) (in3z) -- (nabzip.right in) (nabzip.io) -- (add.right in);
   \path
   (delta.left out) edge [bend right=30] (addl.left in);
   \draw (in1u) -- (delta) (delta.right out) -- (cross) -- (addr.left in);
   \draw (in2u) -- (addl.right in) (in3u) -- (addr.right in);
   \draw (addl.io) -- (nablunzip.left in) (addr.io) -- (nablunzip.right in)
   (nablunzip) -- (outu);
   \end{tikzpicture}
   \qquad
   \begin{tikzpicture}[thick]
   \node [codelta] (nabla) at (0,-0.65) {};
   \node [zero] (Z1) at (0,-1.3) {};
   \node [coordinate] (il1) at (-0.5,0) {};
   \node [coordinate] (ir1) at (0.5,0) {};
   \node (eq) at (0.9,-0.6) {\(=\)};
   \node (rel) at (0.9,-0.9) {(8)\({}^\dagger\)};
   \node [coordinate] (il2) at (1.5,0) {};
   \node [coordinate] (ir2) at (2.2,0) {};
   \node [zero] (ZL) [below of=il2] {};
   \node [zero] (ZR) [below of=ir2] {};
   \node [hole] (space) at (0,-2) {};

   \draw (il1) -- (nabla.left in) (nabla) -- (Z1) (nabla.right in) -- (ir1)
   (il2) -- (ZL) (ir2) -- (ZR);
   \end{tikzpicture}
    }
  \end{center}
If all the multiplications incident to the \(\Delta\) cluster are by \(0\), rather than trickling
down, \(\Delta^\dagger\) composes with \(!\) (due to relation \textbf{(14)}), which gives \(\cup\)
by relation \textbf{(30)}\({}^\dagger\).  By the zigzag identities, this cup becomes a cap that is
tensored with a subdiagram of \(S\) that is in prestandard form.
  \begin{center}
    \scalebox{0.80}{
   \begin{tikzpicture}[thick]
   \node[codelta] (top2) at (1,0.866) {};
   \node[delta] (delt1) at (1,-0.134) {};
   \node[delta] (delt2) at (0.5,-1) {};
   \node[multiply] (c1) at (0,-1.65) {\(0\)};
   \node[multiply] (c2) at (1,-1.65) {\(0\)};
   \node[multiply] (c3) at (2,-1.65) {\(0\)};
   \node (11b) at (0,-2.516) {};
   \node (21b) at (1,-2.516) {};
   \node (n1b) at (2,-2.516) {};

   \draw[dotted] (delt1.left out) -- (delt2.io);
   \draw (delt2.left out) -- (c1) -- (11b) (delt2.right out) -- (c2) -- (21b)
   (delt1.right out) -- (c3) -- (n1b) (top2.io) -- (delt1.io)
   (top2.left in) -- +(120:0.4) (top2.right in) -- +(60:0.4);

   \node (eq) at (2.7,-0.65) {\(=\)};
   \node (rel1) at (2.7,-0.95) {(14)};
   \node (rel2) at (2.7,-1.3) {(4)};

   \node[codelta] (cod) at (3.8,0.5) {};
   \node[bang] (bang) at (3.8,-0.2) {};
   \node[zero] (ze) at (3.4,-1) {};
   \node[zero] (ro) at (4.2,-1) {};
   \node (dots) at (3.8,-1.5) {\(\cdots\)};

   \draw (cod.io) -- (bang) (ze) -- +(0,-1) (ro) -- +(0,-1)
   (cod.left in) -- +(120:0.4) (cod.right in) -- +(60:0.4);

   \node (eq2) at (4.8,-0.65) {\(=\)};
   \node (rel3) at (4.8,-0.95) {(30)\({}^\dagger\)};
   \node[zero] (ze1) at (5.4,-1) {};
   \node[zero] (ro1) at (6.2,-1) {};
   \node (dots1) at (5.8,-1.5) {\(\cdots\)};

   \draw[out=-90,in=-90,relative,looseness=2]
   (5.4,1) -- ++(0,-1) to +(0.8,0) -- +(0,1)
   (ze1) -- +(0,-1) (ro1) -- +(0,-1);
   \end{tikzpicture}
    }
  \end{center}
     \item \Define{\(+ \of S\)}\\
There is a similar trickle down argument for \(+^\dagger\).  First rewriting all multiplications by
zero via relation \textbf{(14)}, the two \(\Delta\) clusters incident to the coaddition can either
reduce to \(\Delta\) clusters that are incident only to nonzero multiplications or reduce to a
single deletion, as above, if none of the incident multiplications were nonzero.  There are three
cases of what can happen from here.
        \begin{itemize}[leftmargin=1em]
        \item \Define{Both \(\Delta\) clusters were incident to only zero-multiplications}\\
In the first case, as above, the \(\Delta\) clusters will reduce to \(!\) incident to the outputs of
\(+^\dagger\).  Relations \textbf{(D7)} and \textbf{(28)} delete the coaddition.
        \item \Define{One \(\Delta\) cluster was incident to only zero-multiplications}\\
Without loss of generality, the \(!\) incident to \(+^\dagger\) is on the left.  Relation
\textbf{(D7)} replaces \(!\) and \(+^\dagger\) with \(!^\dagger \of !\), and relation \textbf{(30)}
replaces \(\Delta\) and \(!^\dagger\) with a cap.  The \(\Delta\) was -- and the cap is -- incident
to some multiplication by \(c \ne 0\).  Without loss of generality, \(c\) is incident to the bottom
addition in the cluster.  Relation \textbf{(29)} replaces the addition and cozero with a cup and
multiplication by \(-1\), which combines with \(c\) by relation \textbf{(11)}.  The cup and cap turn
\(-c\) around to its adjoint, which is \(-c^{-1}\) by relation \textbf{(31)}.
  \begin{center}
    \scalebox{0.80}{
   \begin{tikzpicture}[thick]
   \node[coplus] (cop) at (0.5,0.866) {};
   \node[delta] (dub) at (1,0) {};
   \node[bang] (cobig) at (0,0.2) {};
   \node[multiply] (c1) at (0.5,-1) {\(c\)};
   \node (eq) at (2.1,0) {\(=\)};
   \node (rel1a) at (2.1,-0.3) {(D7)\({}^\dagger\)};
   \node (rel1b) at (2.1,-0.75) {(30)};

   \draw (cop.io) -- +(0,0.5) (cop.left out) -- (cobig) (cop.right out) -- (dub.io)
   (dub.left out) .. controls +(240:0.2) and +(90:0.2) .. (c1.90);
   \draw[dotted] (c1.io) -- +(0,-0.5) (dub.right out) -- +(300:0.5);

   \node[bang] (cobang) at (3.5,0.8) {};
   \node[multiply] (c2) at (3,-1) {\(c\)};

   \draw[out=90,in=90,relative,looseness=2] (cobang) -- +(0,0.5) (c2.90) -- ++(0,0.5) to +(1,0);
   \draw[dotted] (c2.io) -- +(0,-0.5) (c2.90) ++(1,0.5) -- +(0,-0.7);
   \end{tikzpicture}
\qquad
   \begin{tikzpicture}[thick]
   \node[multiply] (c) at (1,-1) {\(c\)};
   \node[plus] (sum) at (0.5,-2) {};
   \node[zero] (coz) at (0.5,-2.65) {};
   \node (eq1) at (2.5,-1) {\(=\)};
   \node (rel1) at (2.5,-1.3) {(29)};
   \node (rel1b) at (2.5,-1.7) {(11)};

   \draw[out=90,in=90,relative,looseness=2] (c.90) -- ++(0,0.5) to +(1,0) (sum.io) -- (coz)
   (c.io) .. controls +(270:0.2) and +(60:0.2) .. (sum.right in);
   \draw[dotted] (sum.left in) -- +(120:0.5) (c.90) ++(1,0.5) -- +(0,-0.7);

   \node[multiply] (negc) at (4,-1) {\(\scriptstyle{-}\)\(c\)};
   \node (eq2) at (5.5,-1) {\(=\)};
   \node (rel2) at (5.5,-1.3) {(31)};

   \draw[out=90,in=90,relative,looseness=2] (negc.90) -- ++(0,0.4) to +(1,0)
   (negc.io) to (negc.io) -- ++(0,-0.4) to +(-1,0);
   \draw[dotted] (negc.90) ++(1,0.4) -- +(0,-0.7) (negc.io) ++(-1,-0.4) -- +(0,0.7);

   \node[multiply] (inv) at (6.5,-1) {\(\!\scriptstyle{-}\)\(c^{-1}\!\!\)};
   \draw[dotted] (inv.90) -- +(0,0.7) (inv.io) -- +(0,-0.7);
   \end{tikzpicture}
    }
  \end{center}
An addition cluster is above \(-c^{-1}\) and a duplication cluster is below, but because those
clusters are not otherwise connected to each other, there is a vertical arrangement of the morphisms
in the \(\Vectk\) block of the string diagram such that no cups or caps are present.
        \item \Define{Both \(\Delta\) clusters are incident to at least one nonzero multiplication}\\
Using relation \textbf{(D5)}\({}^\dagger\), a \(+^\dagger\) will pass through one \(\Delta\) at a
time.  A new \(\Delta^\dagger\) is created each time, but this can trickle down as before.
  \begin{center}
    \scalebox{0.80}{
   \begin{tikzpicture}[-, thick, node distance=0.7cm]
   \node (out1z) {};
   \node (out2z) [right of=out1z, shift={(0.2,0)}] {};
   \node (out3z) [right of=out2z, shift={(0.45,0)}] {};
   \node [delta] (delzip) [above right of=out2z, shift={(0.1,0.3)}] {};
   \node [coplus] (coadd) [above left of=delzip, shift={(0.05,0.3)}] {};
   \node (inz) [above of=coadd] {};
   \node (equal) [above right of=delzip, shift={(0.2,0)}] {\(=\)};
   \node [coplus] (coaddl) [right of=equal, shift={(0.2,0)}] {};
   \node (cross) [below right of=coaddl, shift={(-0.1,0)}] {};
   \node [codelta] (nabla) [below left of=cross, shift={(0.1,0)}] {};
   \node (out1u) [below of=nabla] {};
   \node [coplus] (coaddr) [above right of=cross, shift={(-0.1,0)}] {};
   \node [delta] (deltunzip) [above left of=coaddr, shift={(0.1,0.3)}] {};
   \node (inu) [above of=deltunzip] {};
   \node (out2u) [right of=out1u, shift={(0.4,0)}] {};
   \node (out3u) [right of=out2u, shift={(0.1,0)}] {};

   \draw (out1z) -- (coadd.right in) (coadd) -- (inz) (out2z) --
   (delzip.left out) (out3z) -- (delzip.right out) (delzip.io) -- (coadd.left in);
   \path
   (nabla.left in) edge [bend left=30] (coaddl.right in);
   \draw (out1u) -- (nabla) (nabla.right in) -- (coaddr.right in);
   \draw (out2u) -- (cross)  -- (coaddl.left in) (out3u) -- (coaddr.left in);
   \draw (coaddl.io) -- (deltunzip.left out) (coaddr.io) -- (deltunzip.right out) (deltunzip) -- (inu);
   \end{tikzpicture}
    }
  \end{center}
Once the \(\Delta^\dagger\) trickles down, there are two possibilities for what is directly beneath
each \(+^\dagger\): either the same scenario will recur with a \(\Delta\) connected to one or both
outputs, which can only happen finitely many times, or two nonzero multiplications will be below the
\(+^\dagger\).  A multiplication by any unit in \(k\), \(c \ne 0\), can move through a coaddition by
inserting \(c c^{-1}\) on the top branch and applying relation \textbf{(15)}\({}^\dagger\):
  \begin{center}
    \scalebox{0.80}{
   \begin{tikzpicture}[thick]
   \node[coplus] (cop1) at (0.5,1) {};
   \node[multiply] (c1) at (0,0) {\(c\)};
   \node (eq) at (1.7,0.5) {\(=\)};
   \node[multiply] (c2) at (2.8,1.8) {\(c\)};
   \node[coplus] (cop2) at (2.8,1) {};
   \node[multiply] (cinv) at (3.3,-0.1) {\(c^{-1}\!\!\)};

   \draw (cop1.io) -- +(0,0.5) (cop1.right out) .. controls +(300:0.5) and +(90:0.9) .. +(0.5,-1.5)
   (cop1.left out) .. controls +(240:0.2) and +(90:0.2) .. (c1.90) (c1.io) -- +(0,-0.3)
   (c2) -- +(0,0.5) (c2.io) -- (cop2.io) (cop2.left out) .. controls +(240:0.5) and +(90:0.9) .. +(-0.5,-1.9)
   (cop2.right out) .. controls +(300:0.2) and +(90:0.2) .. (cinv.90) (cinv.io) -- +(0,-0.3);
   \end{tikzpicture}
    }
  \end{center}
This allows one of the outputs of the coaddition to connect directly to a \(+\) cluster.
         \begin{itemize}[leftmargin=1em]
         \item \Define{If both branches go to different \(+\) clusters}, Frobenius relations
\textbf{(21)--(22)} slide the \(+^\dagger\) down the \(+\) cluster on one side until it gets to the
end of that cluster.
  \begin{center}
    \scalebox{0.80}{
   \begin{tikzpicture}[thick]
   \node[plus] (P1a) {};
   \node[plus] (P1b) at (-0.5,0.866) {};
   \node[coplus] (cop1) at (0.2,1.61) {};
   \node[multiply] (c1) at (0.9,0.41) {\(c\)};
   \node[zero] (out1) at (0,-0.5) {};
   \node (eq) at (1.7,0.75) {\(=\)};
   \node (rel1) at (1.7,0.45) {(21)};
   \node (rel2) at (1.7,0.05) {(22)};

   \draw (P1a.io) -- (out1) (c1.io) -- +(0,-0.3) (cop1.io) -- +(0,0.3) (P1a.right in) -- +(60:0.4)
   (cop1.right out) -- (c1.90) (P1b.right in) -- (cop1.left out) (P1b.left in) -- +(120:0.5);
   \draw[dotted] (P1a.left in) -- (P1b.io);

   \node[plus] (P2a) at (2.8,1) {};
   \node[plus] (P2b) at (2.3,1.866) {};
   \node[coplus] (cop2) at (2.8,0.42) {};
   \node[multiply] (c2) at (3.3,-0.42) {\(c\)};
   \node[zero] (out2) at (2.3,-0.23) {};

   \draw (P2b.left in) -- +(120:0.5) (c2.io) -- +(0,-0.3) (P2a.io) -- (cop2.io) (cop2.left out) -- (out2)
   (cop2.right out) -- (c2.90) (P2b.right in) -- +(60:0.5) (P2a.right in) -- +(60:0.5);
   \draw[dotted] (P2a.left in) -- (P2b.io);
   \end{tikzpicture}
    }
  \end{center}
The only morphisms added to the \(\Vectk\) block that are not from \(\Vectk\) were the coaddition
and the cozero.  Since these reduce to an identity morphism string by relation
\textbf{(1)}\({}^\dagger\), the \(\Vectk\) block is truly a \(\Vectk\) block again.
         \item \Define{If both branches go to the same \(+\) cluster}, relation \textbf{(3)} and the
Frobenius relation \textbf{(21)} take both branches to the same addition.
  \begin{center}
    \scalebox{0.80}{
   \begin{tikzpicture}[thick]
   \node[coplus] (cop1) at (0,1)              {};
   \node[plus]  (add1a) at (-0.25,-0.433)     {};
   \node[plus]  (add1b) at (0.25,-1.3)        {};
   \node[multiply] (c1) at (0.65,-0.104) {\(c\)};
   \node           (eq) at (1.3,-0.15)   {\(=\)};
   \node         (rel1) at (1.3,-0.45)     {(3)};
   \node         (rel2) at (1.3,-0.85)    {(21)};

   \draw (add1a.left in) .. controls +(120:0.5) and +(240:0.5) .. (cop1.left out)
   (cop1.io) -- +(0,0.3) (cop1.right out) -- (c1.90) (add1b.io) -- +(0,-0.3)
   (c1.io) .. controls +(270:0.2) and +(60:0.2) .. (add1b.right in);
   \node[hole] (cross) at (0.42,0.5) {};
   \draw (add1a.right in) -- +(60:2);
   \draw[dotted] (add1a.io) -- (add1b.left in);

   \node[plus] (topadd) at (2.3,1.2) {};
   \node[coplus] (cop2) at (2.3,0.42) {};
   \node[multiply] (c2) at (2.75,-0.42) {\(c\)};
   \node[plus] (botadd) at (2.3,-1.5) {};

   \draw (topadd.left in) -- +(120:0.5) (topadd.right in) -- +(60:0.5) (botadd.io) -- +(0,-0.4)
   (cop2.right out) .. controls +(300:0.15) and +(90:0.15) .. (c2.90)
   (c2.io) .. controls +(270:0.2) and +(60:0.2) .. (botadd.right in)
   (botadd.left in) .. controls +(120:0.8) and +(240:0.8) .. (cop2.left out);
   \draw[dotted] (cop2.io) -- (topadd.io);
   \end{tikzpicture}
    }
  \end{center}
Depending on whether the remaining multiplication is by \(1\), either relation \textbf{(25)} reduces
the coaddition and the given addition to an identity string or relation \textbf{(D8)} applies.  In
the former case we are done, and in the latter case relations \textbf{(D7)} and
\textbf{(10)}\({}^\dagger\) remove the \(!^\dagger\) introduced by applying relation (D8).
  \begin{center}
    \scalebox{0.80}{
   \begin{tikzpicture}[thick]
   \node[bang] (cob1) at (0,1) {};
   \node[plus] (sum1a) at (0.5,0.35) {};
   \node[plus] (sum1b) at (1,-0.516) {};
   \node (eq) at (2.2,0) {\(=\)};
   \node (rel) at (2.2,-0.3) {(D7)};

   \draw (cob1) -- (sum1a.left in) (sum1a.right in) -- +(60:0.5) (sum1a.io) -- (sum1b.left in)
   (sum1b.right in) -- +(60:0.5) (sum1b.io) -- +(0,-0.5);

   \node[bang] (cob2) at (2.9,0.134) {};
   \node[bang] (bang) at (2.9,0.65) {};
   \node[plus] (sum2) at (3.4,-0.516) {};

   \draw (bang) -- +(0,0.5) (cob2) -- (sum2.left in) (sum2.right in) -- +(60:0.5)
   (sum2.io) -- +(0,-0.5);
   \end{tikzpicture}
\qquad
   \begin{tikzpicture}[thick]
   \node[bang] (cob1) at (0,0.65) {};
   \node[plus] (sum1) at (0.5,0) {};
   \node[zero] (coz1) at (0.5,-0.65) {};
   \node (eq1) at (1.3,0) {\(=\)};
   \node (rel1) at (1.3,-0.3) {(D7)};

   \draw (cob1) -- (sum1.left in) (sum1.right in) -- +(60:0.5) (sum1.io) -- (coz1);

   \node[bang] (cob2) at (1.9,-0.325) {};
   \node[bang] (bang) at (1.9,0.325) {};
   \node[zero] (coz2) at (1.9,-0.975) {};
   \node (eq2) at (2.5,0) {\(=\)};
   \node (rel2) at (2.5,-0.3) {(10)\({}^\dagger\)};

   \draw (bang) -- +(0,0.65) (cob2) -- (coz2);

   \node[bang] (cob3) at (3.1,-0.5) {};
   \draw (cob3) -- +(0,1);
   \end{tikzpicture}
    }
  \end{center}
          \end{itemize}
        \end{itemize}
     \item \Define{\(\cup \of S\) and \(S \of \cap\)}\\
Composing with a cup below \(S\) is equivalent to composing with cap above \(T\), since \(\cap =
\cup^\dagger\).  Using relation \textbf{(D10)}\({}^\dagger\), this cap can be replaced by multiplication by
\(-1\), coaddition, and zero.  By the arguments above, \(-1\), \(+^\dagger\), and \(0\) can each be
absorbed into the \(\Vectk\) block.
  \begin{center}
    \scalebox{0.80}{
   \begin{tikzpicture}[thick]
   \filldraw[fill=black,draw=black] (-0.4,-0.8) rectangle (0.5,-0.35);
   \draw[out=90,in=90,relative,looseness=2] (-0.25,-0.45) -- ++(0,0.9) to +(0.6,0) -- +(0,-0.9);
   \node (eq1) at (1.15,0.225) {\(=\)};
   \node (rel) at (1.15,-0.1) {(D10)\({}^\dagger\)};

   \filldraw[fill=black,draw=black] (1.8,-1.4) rectangle (2.8,-0.95);
   \node[multiply] (neg) at (2.65,-0.1) {\(\scriptstyle{-1}\)};
   \node[coplus] (cops) at (2.3,1) {};
   \node[zero] (zero1) at (2.3,1.7) {};
   \node (eq2) at (3.5,0.225) {\(=\)};

   \draw (zero1) -- (cops.io) (neg.io) -- (2.65,-1.05)
   (cops.right out) .. controls +(300:0.2) and +(90:0.2) .. (neg.90)
   (cops.left out) .. controls +(240:0.2) and +(90:0.2) .. (1.95,0) -- (1.95,-1.05);

   \filldraw[fill=black,draw=black] (3.95,-0.8) rectangle (4.85,-0.35);
   \node[coplus] (robbers) at (4.4,0.225) {};
   \node[zero] (zero2) at (4.4,0.9) {};
   \node (eq3) at (5.3,0.225) {\(=\)};

   \draw (zero2) -- (robbers.io)
   (robbers.left out) .. controls +(240:0.2) and +(90:0.2) .. +(255:0.5)
   (robbers.right out) .. controls +(300:0.2) and +(90:0.2) .. +(285:0.5);

   \filldraw[fill=black,draw=black] (5.75,0.2) rectangle (6.65,-0.25);
   \node[zero] (zero3) at (6.2,0.65) {};
   \draw (zero3) -- +(0,-0.5);
   \node (eq4) at (7.1,0.225) {\(=\)};
   \filldraw[fill=black,draw=black] (7.55,0.45) rectangle (8.45,0);
   \end{tikzpicture}
    }
  \end{center}
The compositions with zero and multiplication by \(-1\) expand the \(\Vectk\) block, thus have no
effect on whether the diagram can be written in prestandard form.
     \item \Define{\(! \of S\)}\\
When composing \(!^\dagger\) above \(T\), two possibilities arise, depending on whether there is a
layer of \(\Delta\)s in the \(\Vectk\) block.  If there is such a layer, relation \textbf{(30)} combines the
\(!^\dagger\) with a \(\Delta\), making a cap on top of \(T\).  As we have just seen, this can be
rewritten in prestandard form.
  \begin{center}
    \scalebox{0.80}{
   \begin{tikzpicture}[thick]
   \filldraw[fill=black,draw=black] (-0.45,0.45) rectangle (0.45,0);
   \node[bang] (cob1) at (0,0.9) {};
   \node (eq1) at (0.9,0.225) {\(=\)};
   \node[delta] (dub) at (1.5,0.225) {};
   \node[bang] (cob2) at (1.5,0.9) {};
   \filldraw[fill=black,draw=black] (1.05,-0.8) rectangle (1.95,-0.35);
   \node (eq2) at (2.5,0.225) {\(=\)};
   \node (rel) at (2.5,-0.075) {(30)};
   \filldraw[fill=black,draw=black] (2.85,-0.8) rectangle (3.75,-0.35);

   \draw[out=90,in=90,relative,looseness=2] (3,-0.45) -- ++(0,0.9) to +(0.6,0) -- +(0,-0.9)
   (cob1) -- +(0,-0.5) (cob2) -- (dub.io)
   (dub.left out) .. controls +(240:0.2) and +(90:0.2) .. +(255:0.5)
   (dub.right out) .. controls +(300:0.2) and +(90:0.2) .. +(285:0.5);
   \end{tikzpicture}
    }
  \end{center}
If no layer of \(\Delta\)s exists, relations \textbf{(31)}\({}^\dagger\) and
\textbf{(18)}\({}^\dagger\) pass the codeletion through a nonzero multiplication.  Then relations
\textbf{(D7)} and \textbf{(10)}\({}^\dagger\) can be used to remove \(!^\dagger\), as we have
already seen.  This leaves only the basic morphisms of \(\Vectk\) within the \(\Vectk\) block.
  \begin{center}
    \scalebox{0.80}{
   \begin{tikzpicture}[thick]
   \node[bang] (cob1) at (0,1) {};
   \node[multiply] (c) at (0,0) {\(c\)};
   \node (eq1) at (0.8,0) {\(=\)};
   \node (rel1) at (0.8,-0.3) {(31)\({}^\dagger\)};

   \draw (cob1) -- (c) -- +(0,-1);

   \node[bang] (cob2) at (1.8,1) {};
   \node[upmultiply] (cinv) at (1.8,0) {\(\!c^{-1}\!\)};
   \node (eq2) at (2.8,0) {\(=\)};
   \node (rel2) at (2.8,-0.3) {(18)\({}^\dagger\)};

   \draw (cob2) -- (cinv) -- +(0,-1);

   \node[bang] (cob3) at (3.4,0.5) {};
   \draw (cob3) -- +(0,-1);
   \end{tikzpicture}
    }
  \end{center}
If the multiplication is \(c=0\), relation \textbf{(14)} converts \(c=0\) to \(0 \of !\), allowing
relation \textbf{(28)} to remove the \(!^\dagger\), with the same conclusion.
  \begin{center}
    \scalebox{0.80}{
   \begin{tikzpicture}[thick]
   \node[bang] (cob1) at (0.3,1) {};
   \node[multiply] (c0) at (0.3,0) {\(0\)};
   \node (eq1) at (1,0) {\(=\)};
   \node (rel1) at (1,-0.3) {(14)};

   \draw (cob1) -- (c0) -- +(0,-1);

   \node[bang] (cob2) at (1.6,0.925) {};
   \node[bang] (bang) at (1.6,0.325) {};
   \node[zero] (zero1) at (1.6,-0.325) {};
   \node (eq2) at (2.2,0) {\(=\)};
   \node (rel2) at (2.2,-0.3) {(28)};

   \draw (cob2) -- (bang) (zero1) -- +(0,-0.65);

   \node[zero] (zero2) at (2.8,0.5) {};
   \draw (zero2) -- +(0,-1);
   \end{tikzpicture}
    }
  \end{center}
     \item \Define{\(c \of S\) for \(c=0\)}\\
Composing with multiplication by \(c = 0\) below \(S\) is equivalent to composing with codeletion,
followed by tensoring with zero.  Codeletion is the \(! \of S\) case, and zero can be written in a
prestandard form, so this reduces to tensoring two diagrams that are in prestandard form.
  \begin{center}
    \scalebox{0.80}{
   \begin{tikzpicture}[thick]
   \filldraw[fill=black,draw=black] (-0.45,0.45) rectangle (0.45,0);
   \node[multiply] (c) at (1.05,0) {\(0\)};

   \draw[out=-90,in=-90,relative,looseness=2]
   (c.io) -- (1.05,-1.35) (c.90) -- (1.05,0.45) to (0.15,0.45);

   \node (eq) at (1.8,0) {\(=\)};
   \node (rel) at (1.8,-0.3) {(14)};
   \filldraw[fill=black,draw=black] (2.3,0.45) rectangle (3.2,0);
   \node[zero] (ins2) at (3.8,-0.3) {};
   \node[bang] (del2) at (3.8,0.3) {};

   \draw[out=-90,in=-90,relative,looseness=2]
   (ins2) -- (3.8,-1.35) (del2) -- (3.8,0.45) to +(-0.9,0);
   \end{tikzpicture}
    }
  \end{center}
     \end{itemize}
Finally, we need to show the prestandard forms can be rewritten in standard form.  We need to show
what elementary row operations look like in terms of string diagrams.  We also need to show for an
arbitrary prestandard string diagram \(S\) with \(\Vectk\) block \(T\) that if \(T\) is replaced
with \(T'\), the diagram where an elementary row operation has been performed on \(T\), the 
resulting diagram \(S'\) can be built from \(S\) using relations \textbf{(1)--(31)}.

Because the \(i\)th output of a \(\Vectk\) diagram is a linear combinations of the inputs, with the
coefficients coming from the \(i\)th row of its matrix, rows of the matrix correspond to outputs of
the \(\Vectk\) block.  Because of this, the row operation subdiagrams in \(S'\) will have
\(0^\dagger\)s immediately beneath them.  Showing \(S'\) can be built from \(S\) reduces to showing
composition of row operations with \(0^\dagger\)s builds the same number of \(0^\dagger\)s.
     \begin{itemize}[leftmargin=1em]
     \item Add a multiple \(c\) of one row to another row:\\
If we want to add a multiple of the \(\beta\) row to the \(\alpha\) row, we need a map \((y_\alpha,
y_\beta) \mapsto (y_\alpha + c y_\beta, y_\beta)\).  By the naturality of the braiding in a
symmetric monoidal category, we can ignore any intermediate outputs:
  \begin{center}
   \begin{tikzpicture}[thick]
   \node [plus] (plus) at (0,0) {};
   \node [multiply] (c) at (0.5,1.1) {\(c\)};
   \node [delta] (dub) at (1,2) {};
   \node (in1) at (-0.5,3) {\(y_\alpha\)};
   \node (in2) at (1,3) {\(y_\beta\)};
   \node (out1) at (0,-1) {\(y_\alpha + c y_\beta\)};
   \node (out2) at (1.5,-1) {\(y_\beta\)};

   \draw
   (in1) .. controls +(270:2) and +(120:0.5) .. (plus.left in)
   (plus.io) -- (out1) (dub.io) -- (in2)
   (out2) .. controls +(90:2) and +(300:0.5) .. (dub.right out)
   (dub.left out) .. controls +(240:0.2) and +(90:0.2) .. (c.90)
   (c.io) .. controls +(270:0.2) and +(60:0.2) .. (plus.right in);
   \end{tikzpicture}
  \end{center}
When two cozeros are composed on the bottom of this diagram, the result is two cozeros:
  \begin{center}
    \scalebox{0.80}{
   \begin{tikzpicture}[thick]
   \node [plus] (plus) at (-0.3,0) {};
   \node [multiply] (c) at (0.2,1.1) {\(c\)};
   \node [delta] (dub) at (0.7,2) {};
   \node [coordinate] (in1) at (-0.8,2.7) {};
   \node [coordinate] (in2) at (0.7,2.7) {};
   \node [zero] (out1) at (-0.3,-0.7) {};
   \node [zero] (out2) at (1.2,-0.7) {};

   \draw
   (in1) .. controls +(270:2) and +(120:0.5) .. (plus.left in)
   (plus.io) -- (out1) (dub.io) -- (in2)
   (out2) .. controls +(90:2) and +(300:0.5) .. (dub.right out)
   (dub.left out) .. controls +(240:0.2) and +(90:0.2) .. (c.90)
   (c.io) .. controls +(270:0.2) and +(60:0.2) .. (plus.right in);
   \node (eq1) at (1.85,1) {\(=\)};
   \node (rel1) at (1.85,0.7) {(D10)};
   \node [multiply] (c2) at (3.3,1.1) {\(c\)};
   \node [multiply] (neg2) at (3.3,0.3) {\(\scriptstyle{-1}\)};
   \node [delta] (dub2) at (3.8,2) {};
   \node [zero] (coz2) at (4.3,-0.7) {};

   \draw
   (dub2) -- (3.8,2.7) (c2) -- (neg2)
   (dub2.left out) .. controls +(240:0.2) and +(90:0.2) .. (c2.90)
   (coz2) .. controls +(90:2) and +(300:0.5) .. (dub2.right out)
   (neg2.io) .. controls +(270:0.5) and +(270:0.5) .. +(-0.8,0) -- (2.5,2.7);
   \node (eq2) at (4.8,1) {\(=\)};
   \node (rel2) at (4.8,0.7) {(11)};
   \node [multiply] (c3) at (6.1,1.1) {\(\scriptstyle{-}\)\(c\)};
   \node [delta] (dub3) at (6.6,2) {};
   \node [zero] (coz3) at (7.1,-0.7) {};

   \draw
   (dub3) -- (6.6,2.7)
   (dub3.left out) .. controls +(240:0.2) and +(90:0.2) .. (c3.90)
   (coz3) .. controls +(90:2) and +(300:0.5) .. (dub3.right out)
   (c3.io) .. controls +(270:0.5) and +(270:0.5) .. +(-0.8,0) -- (5.3,2.7);
   \node (eq3) at (7.6,1) {\(=\)};
   \node (rel3) at (7.6,0.7) {(D6)};
   \node [multiply] (c4) at (8.9,0.2) {\(\scriptstyle{-}\)\(c\)};
   \node [zero] (zero) at (8.9,1) {};
   \node [zero] (coz4) at (8.9,1.5) {};

   \draw
   (zero) -- (c4) (coz4) -- (8.9,2.2)
   (c4.io) .. controls +(270:0.5) and +(270:0.5) .. +(-0.8,0) -- (8.1,2.2);
   \node (eq4) at (9.9,1) {\(=\)};
   \node (rel4) at (9.9,0.7) {(16)};
   \node [zero] (coz5a) at (10.5,0.5) {};
   \node [zero] (coz5b) at (11,0.5) {};

   \draw (coz5a) -- (10.5,1.5) (coz5b) -- (11,1.5);
   \end{tikzpicture}
    }
  \end{center}
     \item Swap rows:\\
If we want to swap the \(\beta\) row with the \(\alpha\) row, we need a map \((y_\alpha, y_\beta)
\mapsto (y_\beta, y_\alpha)\), which is the braiding of two outputs.  Again, intermediate outputs
may be ignored:
  \begin{center}
   \begin{tikzpicture}[thick,node distance=0.5cm]
   \node (fstart) {\(y_\alpha\)};
   \node [coordinate] (ftop) [below of=fstart] {};
   \node (center) [below right of=ftop] {};
   \node [coordinate] (fout) [below right of=center] {};
   \node (fend) [below of=fout] {\(y_\alpha\)};
   \node [coordinate] (gtop) [above right of=center] {};
   \node (gstart) [above of=gtop] {\(y_\beta\)};
   \node [coordinate] (gout) [below left of=center] {};
   \node (gend) [below of=gout] {\(y_\beta\)};

   \draw [rounded corners] (fstart) -- (ftop) -- (center) --
   (fout) -- (fend) (gstart) -- (gtop) -- (gout) -- (gend);
   \end{tikzpicture}
  \end{center}
When two cozeros are composed at the bottom of this diagram, the cut strings untwist by the
naturality of the braiding:
  \begin{center}
    \scalebox{0.80}{
   \begin{tikzpicture}[thick,node distance=0.5cm]
   \node [coordinate] (fstart) {};
   \node [coordinate] (ftop) [below of=fstart] {};
   \node (center) [below right of=ftop] {};
   \node [coordinate] (fout) [below right of=center] {};
   \node [zero] (fend) [below of=fout] {};
   \node [coordinate] (gtop) [above right of=center] {};
   \node [coordinate] (gstart) [above of=gtop] {};
   \node [coordinate] (gout) [below left of=center] {};
   \node [zero] (gend) [below of=gout] {};

   \draw [rounded corners=0.25cm] (fstart) -- (ftop) -- (center) --
   (fout) -- (fend) (gstart) -- (gtop) -- (gout) -- (gend);

   \node (eq1) [below right of=gtop, shift={(0.5,0)}] {\(=\)};
   \node [zero] (ftop1) [above right of=eq1, shift={(0.5,0)}] {};
   \node [coordinate] (fstart1) [above of=ftop1] {};
   \node (center1) [below right of=ftop1] {};
   \node [coordinate] (gtop1) [above right of=center1] {};
   \node [coordinate] (gstart1) [above of=gtop1] {};
   \node [coordinate] (gout1) [below left of=center1] {};
   \node [zero] (gend1) [below of=gout1] {};

   \draw [rounded corners=0.25cm] (fstart1) -- (ftop1)
   (gstart1) -- (gtop1) -- (gout1) -- (gend1);

   \node (eq2) [below right of=gtop1, shift={(0.35,0)}] {\(=\)};
   \node [zero] (coz1) [right of=eq2, shift={(0.2,-0.5)}] {};
   \node [zero] (coz2) [right of=coz1] {};

   \draw (coz1) -- +(0,1) (coz2) -- +(0,1); 
   \end{tikzpicture}
    }
  \end{center}
     \item Multiply a row by \(c \neq 0\):\\
The third row operation is multiplying an arbitrary row by a unit, but since \(k\) is a field, that
means any \(c \neq 0\).  This is just the multiplication map on one of the outputs:
  \begin{center}
   \begin{tikzpicture}[thick]
   \node [multiply] (c) at (0,0) {\(c\)};
   \node (in) at (0,1) {\(y_\alpha\)};
   \node (out) at (0,-1) {\(c y_\alpha\)};

   \draw (in) -- (c) -- (out);
   \end{tikzpicture}
  \end{center}
Because \(c\) is a unit, \(c^{-1} \in k\), so the multiplication by \(c\) can be replaced by the
adjoint of multiplication by \(c^{-1}\).
  \begin{center}
    \scalebox{0.80}{
   \begin{tikzpicture}[thick]
   \node [multiply] (c) at (0,0) {\(c\)};
   \node (in) at (0,1) {};
   \node [zero] (out) at (0,-1) {};

   \draw (in) -- (c) -- (out);

   \node (eq1) at (0.8,0) {\(=\)};
   \node (rel1) at (0.8,-0.35) {(31)\({}^\dagger\)};
   \node [zero] (coz1) at (1.9,-1) {};
   \node [upmultiply] (c1) at (1.9,-0.1) {\(\!c^{-1}\!\)};

   \draw (coz1) -- (c1) -- (1.9,1);

   \node (eq2) at (3,0) {\(=\)};
   \node (rel2) at (3,-0.35) {(16)\({}^\dagger\)};
   \node [zero] (coz2) at (3.7,-0.5) {};

   \draw (coz2) -- +(0,1);
   \end{tikzpicture}
    }
  \end{center}
     \end{itemize}
\end{proof}

\section{An example}
\label{example}

A famous example in control theory is the `inverted pendulum': an upside-down 
pendulum on a cart \cite{Friedland}.  The pendulum naturally tends to fall over, but we
can stabilize it by setting up a feedback loop where we observe its position and move 
the cart back and forth in a suitable way based on this observation.  Without introducing 
this feedback loop, let us see how signal-flow diagrams can be used to describe the pendulum
and the cart.  We shall see that the diagram for a system made of parts is built 
from the diagrams for the parts, not merely by composing and tensoring, but also with the
help of duplication and coduplication, which give additional ways to set variables equal to one another.

Suppose the cart has mass \(M\) and can only move back and forth in one direction, so its
position is described by a function \(x(t)\).  If it is acted on by a total force 
\(F_{\mathrm{net}}(t)\) then Newton's second law says 
\[
F_{\mathrm{net}}(t) = M \ddot{x}(t)  .
\]
We can thus write a signal-flow diagram with the force as input and the cart's position as output:
  \begin{center}
   \begin{tikzpicture}[thick]
   \node[coordinate] (q) [label={[shift={(0,-0.6)}]\(x\)}] {};
   \node [integral] (diff) [above of=q] {\(\int\)};
   \node (v) [above of=diff, label={[shift={(0.4,-0.5)}]\(\dot x\)}] {};
   \node [integral] (dot) [above of=v] {\(\int\)};
   \node (a) [above of=dot, label={[shift={(0.4,-0.5)}]\(\ddot{x}\)}] {};
   \node [multiply] (M) [above of=a] {\(\frac{1}{M}\)};
   \node[coordinate] (F) [above of=m, label={[shift={(0,0)}]\(F_{\mathrm{net}}\)}] {};

   \draw (F) -- (M) -- (dot) -- (diff) -- (q);
   \end{tikzpicture}
  \end{center}

The inverted pendulum is a rod of length \(\ell\) with a mass \(m\)  at its end, mounted 
on the cart and only able to swing back and forth in one direction, parallel to the cart's movement.  If its angle from vertical, \(\theta(t)\), is small, then its equation of motion 
is approximately linear:
\[
   \ell \ddot{\theta}(t) = g \theta(t) - \ddot{x}(t) 
\]
where \(g\) is the gravitational constant.  We can turn this equation
into a signal-flow diagram with \(\ddot{x}\) as input and \(\theta\) as output:
\begin{center}
\begin{tikzpicture}[thick]
   \node [multiply] (linverse) at (0,0) {\(-\frac{1}{l}\)};
   \node [plus] (adder) at (-0.5,-2) {};
   \node [integral] (int1) at (-0.5,-3) {\(\int\)};
   \node [upmultiply] (goverl) at (-2,-3.5) {\(\frac{g}{l}\)};
   \node [integral] (int2) at (-0.5,-4.3) {\(\int\)};
   \node [delta] (split) at (-0.5,-5.5) {};
   \node at (0 em,1.3) {\(\ddot{x}\)}; 
   \node at (0 em,-7.4) {\(\theta\)}; 

   \draw (linverse) -- (0,1)
         (linverse.io) .. controls +(270:0.8) and +(60:0.7) .. (adder.right in)
         (adder.io) -- (int1) -- (int2) -- (split)
         (goverl.io) .. controls +(90:1.5) and +(120:1) .. (adder.left in)
         (split.right out) .. controls +(300:0.7) and +(90:1.3) .. (0,-7)
         (split.left out) .. controls +(240:1) and +(270:1.5) .. (goverl.270);
\end{tikzpicture}
\end{center}
Note that this already includes a kind of feedback loop, since the pendulum's angle affects
the force on the pendulum.  

Finally, there is an equation describing the total force on the cart:
\[ F_{\mathrm{net}}(t)  = F(t) - m g \theta(t)  \]
where \(F(t)\) is an externally applied force and \(-mg\theta(t)\) is the force due to the 
pendulum.  It will be useful to express this as follows:
\begin{center}
\begin{tikzpicture}[thick]
   \node [plus] (differ) at (0,0) {};
   \node [upmultiply] (mg) at (1.5,-1.5) {\(\scriptstyle{-mg}\)};
   \node at (0,-3.5) {\(F_{\mathrm{net}}\)}; 
   \node at (1.5,-3.5) {\(\theta\)}; 
   \node at (-0.5,1.8) {\(F\)}; 

   \draw (mg.io) .. controls +(90:1.5) and +(60:1) .. (differ.right in)
         (differ.left in) .. controls +(120:.7) and +(270:.7) .. (-0.5,1.5)
         (differ.io) -- (0,-3.2) 
         (mg) -- (1.5,-3.2);
\end{tikzpicture}
\end{center}
Here we are treating \(\theta\) as an output rather than an input, with the help of a cap.

The three signal-flow diagrams above describe the following linear relations:
\begin{eqnarray}
x &=& \int \int \frac{1}{M}  F_{\mathrm{net}}   \label{eq.1}  \\
\theta &=& \int \int 
\left(\frac{g}{l} \,\theta - \frac{1}{l}\, \ddot{x}\right)  \label{eq.2} \\
 F_{\mathrm{net}} + mg \theta &=&  F \label{eq.3}
\end{eqnarray}
where we treat \eqref{eq.1} as a relation with \(F_{\mathrm{net}}\) as input and
\(x\) as output, \eqref{eq.2} as a relation with \(\ddot{x}\) as input and \(\theta\)
as output, and \eqref{eq.3} as a relation with \(F\) as input and \((F_{\mathrm{net}}, 
\theta)\) as output.   

To understand how the external force affects the position of the cart and the angle of the pendulum,
we wish to combine all three diagrams to form a signal-flow diagram that has the external force
\(F\) as input and the pair \((x, \theta)\) as output.  This is not just a simple matter of
composing and tensoring the three diagrams.  We can take \(F_{\mathrm{net}}\), which is an output of
\eqref{eq.3}, and use it as an input for \eqref{eq.1}.  But we also need to duplicate \(\ddot{x}\),
which appears as an intermediate variable in \eqref{eq.1} since \(\ddot{x} = \frac{1}{M}
F_{\mathrm{net}}\), and use it as an input for \eqref{eq.2}.  Finally, we need to take the variable
\(\theta\), which appears as an output of both \eqref{eq.2} and \eqref{eq.3}, and identify the two
copies of this variable using coduplication.  Following traditional engineering practice, we shall
write coduplication in terms of duplication and a cup, as follows:
\begin{center}
\begin{tikzpicture}[thick]
\node [codelta] (split) at (-2,0) {};
\node [] at (-1,0) {\(=\)};
 \node [delta] (theta) at (0,0) {};
\draw     (theta.left out) .. controls +(240:0.5) and +(90:0.4) .. (-0.4,-1)
         (theta.right out) .. controls +(300:1) and +(270:2) .. (1,1)
         (theta.io) -- (0,1)
         (split.left in) .. controls +(120:0.6) .. (-2.5,1)
         (split.right in) .. controls +(60:0.6) .. (-1.5,1)
         (split.io) -- (-2,-1);
\end{tikzpicture}
\end{center}

The result is this signal-flow diagram:
\vfill \eject 
\begin{center}
\begin{tikzpicture}[thick]
   \node [plus] (differ) at (0,0) {};
   \node [upmultiply] (mg) at (1.5,-1.5) {\(\scriptstyle{-mg}\)};
   \node [delta] (split) at (0,-2.5) {};
   \node [multiply] (Minv) at (0,-1) {\(\frac{1}{M}\)};
   \node at (-0.5,1.8) {\(F\)}; 

   \draw (mg.io) .. controls +(90:1.5) and +(60:1) .. (differ.right in)
         (differ.left in) .. controls +(120:.7) and +(270:.7) .. (-0.5,1.5)
         (differ.io) -- (Minv.90)
         (Minv.io) -- (split.io)
         (mg.270) .. controls +(270:0.6) and +(90:0.6) .. (3.5,-4);

   \node [integral] (int1) at (-0.5,-5.5) {\(\int\)};
   \node [integral] (int2) at (-0.5,-8) {\(\int\)};
   \node at (-0.5 cm,-13.3) {\(x\)}; 

   \draw (split.left out) .. controls +(240:0.7) and +(90:1) .. (int1.90)
         (int1.io) -- (int2.90)
         (int2.io) -- (-0.5,-13);

   \node [multiply] (linverse) at (2.5,-5) {\(-\frac{1}{l}\)};
   \node [plus] (adder) at (2,-7) {};
   \node [integral] (int3) at (2,-8) {\(\int\)};
   \node [upmultiply] (goverl) at (0.75,-8.5) {\(\frac{g}{l}\)};
   \node [integral] (int4) at (2,-9.3) {\(\int\)};
   \node [delta] (split2) at (2,-10.5) {};
   \node [delta] (theta) at (2.5,-11.5) {};
   \node at (2 cm,-13.3) {\(\theta\)}; 

   \draw (split.right out) .. controls +(300:1) and +(90:1) .. (linverse.90) 
         (linverse.io) .. controls +(270:0.6) and +(60:0.6) .. (adder.right in)
         (adder.io) -- (int3) -- (int4) -- (split2)
         (goverl.io) .. controls +(90:1.5) and +(120:1) .. (adder.left in)
         (split2.right out) .. controls +(300:0.5) and +(90:0.5) .. (theta)
         (theta.left out) .. controls +(240:0.7) and +(90:0.7) .. (2,-13)
         (theta.right out) .. controls +(300:1) and +(270:1.5) .. (3.5,-10.5)
         (3.5,-10.5) -- (3.5,-4)
         (split2.left out) .. controls +(240:1) and +(270:1.5) .. (goverl.270);
\end{tikzpicture}
\end{center}

This is not the signal-flow diagram for the inverted pendulum that one sees in
Friedland's textbook on control theory \cite{Friedland}.  We leave it as an exercise to the reader to 
rewrite the above diagram using the rules given in this paper, obtaining Friedland's diagram:

\vfill \eject  

\begin{center}
\begin{tikzpicture}[thick]
   \node [delta] (F) at (0,10) {};
   \node at (0,11.3) {\(F\)}; 
   \node [multiply] (Minv) at (-1.75,8) {\(\frac{1}{M}\)};
   \node [multiply] (Mlinv) at (1.75,8) {\(\frac{-1}{Ml}\)};
   \node [plus] (xsum) at (-1.25,6) {};
   \node [plus] (thsum) at (2.25,6) {};
   \node [integral] (i1) at (-1.25,5) {\(\int\)};
   \node [integral] (i2) at (-1.25,3.5) {\(\int\)};
   \node at (-1.25,-1.3) {\(x\)}; 
   \node [integral] (i3) at (2.25,5) {\(\int\)};
   \node [integral] (i4) at (2.25,3.5) {\(\int\)};
   \node [upmultiply] (mgM) at (0.5,4.25) {\(\frac{mg}{M}\)};
   \node [upmultiply] (mess) at (4.5,3.25) {\(\!\!\frac{(M+m)g}{Ml}\!\!\)};
   \node [delta] (theta1) at (2.25,2) {};
   \node [delta] (theta2) at (2.75,0.5) {};
   \node at (2.25,-1.3) {\(\theta\)}; 

   \draw (F) -- (0,11)
         (F.left out) .. controls +(240:0.7) and +(90:0.7) .. (Minv.90)
         (F.right out) .. controls +(300:0.7) and +(90:0.7) .. (Mlinv.90)
         (Minv.io) .. controls +(270:0.7) and +(120:0.7) .. (xsum.left in)
         (Mlinv.io) .. controls +(270:0.7) and +(120:0.7) .. (thsum.left in)
         (mgM.io) .. controls +(90:1.7) and +(60:1) .. (xsum.right in)
         (mess.io) .. controls +(90:2.5) and +(60:1) .. (thsum.right in)
         (xsum) -- (i1) -- (i2) -- (-1.25,-1)
         (thsum) -- (i3) -- (i4) -- (theta1)
         (mgM.270) .. controls +(270:1.7) and +(240:1.7) .. (theta1.left out)
         (mess.270) .. controls +(270:1.7) and +(300:1.7) .. (theta2.right out)
         (theta2.io) .. controls +(90:0.7) and +(300:0.7) .. (theta1.right out)
         (theta2.left out) .. controls +(240:0.7) and +(90:0.7) .. (2.25,-1)
;
\end{tikzpicture}
\end{center}
As a start, one can use Theorem \ref{presrk} to prove that it is indeed possible to do this rewriting.  
To do this, simply check that both signal-flow diagrams define the same linear relation.  The proof of
the theorem gives a method to actually do the rewriting---but not necessarily the fastest method.

\section{Conclusions}
\label{conclusions}

We conclude with some remarks aimed at setting our work in context.
In particular, we would like to compare it to some other recent papers.  On April 30th, 2014, after
most of this paper was written, Soboci\'nski told the first author about some closely related 
papers that he wrote with Bonchi and Zanasi \cite{BSZ1,BSZ2}.  These provide
interesting characterizations of symmetric monoidal categories equivalent to \(\Vectk\) and
\(\Relk\).  Later, while this paper was being refereed, Wadsley and Woods \cite{WW} generalized the first of
these results to the case where \(k\) is any commutative rig.  We discuss Wadsley and Woods'
work first, since doing so makes the exposition simpler.

A particularly tractable sort of symmetric monoidal category is a PROP: that is, a strict symmetric
monoidal category where the objects are natural numbers and the tensor product of objects is given
by ordinary addition.  The symmetric monoidal category \(\Vectk\) is equivalent to the PROP
\(\Mat(k)\), where a morphism \(f \maps m \to n\) is an \(n \times m\) matrix with entries in
\(k\), composition of morphisms is given by matrix multiplication, and the tensor product of
morphisms is the direct sum of matrices.

Wadsley and Woods gave an elegant description of the algebras of \(\Mat(k)\).  Suppose \(C\) is a
PROP and \(D\) is a strict symmetric monoidal category.  Then the \Define{category of algebras} of
\(C\) in \(D\) is the category of strict symmetric monoidal functors \(F \maps C \to D\) and
natural transformations between these.  If for every choice of \(D\) the category of algebras of
\(C\) in \(D\) is equivalent to the category of algebraic structures of some kind in \(D\), we say
\(C\) is the PROP for structures of that kind. 

In this language, Wadsley and Woods proved that \(\Mat(k)\) is the PROP for `bicommutative
bimonoids over \(k\)'.  To understand this, first note that for any bicommutative bimonoid \(A\) in
\(D\), the bimonoid endomorphisms of \(A\) can be added and composed, giving a rig \(\End(A)\).  A
bicommutative bimonoid \Define{over \(k\)} in \(D\) is one equipped with a rig homomorphism
\(\Phi_A \maps k \to \End(A)\).  Bicommutative bimonoids over \(k\) form a category where a
morphism \(f \maps A \to B\) is a bimonoid homomorphism compatible with this extra structure,
meaning that for each \(c \in k\) the square
\[ 
\xymatrix{ A  \ar[dd]_f \ar[rr]^{\Phi_A(c)} && A \ar[dd]^f \\  \\
B  \ar[rr]_{\Phi_B(c)} && B
 } 
\] 
commutes.  Wadsley and Woods proved that this category is equivalent to the category of algebras of
\(\Mat(k)\) in \(D\).   

This result amounts to a succinct restatement of Theorem \ref{presvk}, though technically the
result is a bit different, and the style of proof much more so.  The fact that an algebra of
\(\Mat(k)\) is a bicommutative bimonoid is equivalent to our relations \textbf{(1)--(10)}.  The
fact that \(\Phi_A(c)\) is a bimonoid homomorphism for all \(c \in k\) is equivalent to relations
\textbf{(15)--(18)}, and the fact that \(\Phi\) is a rig homomorphism is equivalent to relations
\textbf{(11)--(14)}.  

Even better, Wadsley and Woods showed that \(\Mat(k)\) is the PROP for bicommutative bimonoids over
\(k\) whenever \(k\) is a commutative rig.  Subtraction and division are not required to define the
PROP \(\Mat(k)\), nor are they relevant to the definition of bicommutative bimonoids over \(k\).
Working with commutative rigs is not just generalization for the sake of generalization: it
clarifies some interesting facts.

For example, the commutative rig of natural numbers gives a PROP \(\Mat(\N)\).  This is equivalent
to the symmetric monoidal category where morphisms are isomorphism classes of spans of finite sets,
with disjoint union as the tensor product.  Lack \cite[Ex.\ 5.4]{Lack} had already shown that this
is the PROP for bicommutative bimonoids.  But this also follows from the result of Wadsley and
Woods, since every bicommutative bimonoid \(A\) is automatically equipped with a unique rig
homomorphism \(\Phi_A \maps \N \to \End(A)\).

Similarly, the commutative rig of booleans \(\B = \{F,T\}\), with `or' as addition and `and' as
multiplication, gives a PROP \(\Mat(\B)\).  This is equivalent to the symmetric monoidal category
where morphisms are relations between finite sets, with disjoint union as the tensor product.
Mimram \cite[Thm.\ 16]{Mimram} had already shown this is the PROP for \Define{special}
bicommutative bimonoids, meaning those where comultiplication followed by multiplication is the
identity:
\begin{center}
   \begin{tikzpicture}[thick]
   \node [plus] (sum) at (0.4,-0.5) {};
   \node [delta] (cosum) at (0.4,0.5) {};
   \node [coordinate] (in) at (0.4,1) {};
   \node [coordinate] (out) at (0.4,-1) {};
   \node (eq) at (1.3,0) {\(=\)};
   \node [coordinate] (top) at (2,1) {};
   \node [coordinate] (bottom) at (2,-1) {};

   \path (sum.left in) edge[bend left=30] (cosum.left out)
   (sum.right in) edge[bend right=30] (cosum.right out);
   \draw (top) -- (bottom)
   (sum.io) -- (out)
   (cosum.io) -- (in);
   \end{tikzpicture}
  \end{center}
But again, this follows from the general result of Wadsley and Woods.

Finally, taking the commutative ring of integers \(\Z\), Wadsley and Woods showed that \(\Mat(\Z)\)
is the PROP for bicommutative Hopf monoids.  The key here is that scalar multiplication by \(-1\)
obeys the axioms for an antipode, namely:
  \begin{center}
   \begin{tikzpicture}[thick]
  \node [delta] (cosum) at (0,1.8) {};
   \node [multiply] (times) at (-0.34,0.97) {\tiny \(-1\)};
   \node [plus] (sum) at (0,0) {};

   \draw
   (cosum.io) -- +(0,0.3) (sum.io) -- +(0,-0.3)
   (cosum.right out) .. controls +(300:0.5) and +(60:0.5) .. (sum.right in)
   (cosum.left out) .. controls +(240:0.15) and +(90:0.15) .. (times.90)
   (times.io) .. controls +(270:0.15) and +(120:0.15) .. (sum.left in);

   \node (eq) at (1.2,0.9) {\(=\)};
   \node [bang] (bang) at (2.1,1.3) {};
   \node [zero] (cobang) at (2.1,0.5) {};

   \draw (bang) -- +(0,1.02) (cobang) -- +(0,-1.02);

   \node (eq) at (3,0.9) {\(=\)};
   \node [delta] (cosum) at (4.2,1.8) {};
   \node [multiply] (times) at (4.54,0.97) {\tiny \(-1\)};
   \node [plus] (sum) at (4.2,0) {};

   \draw
   (cosum.io) -- +(0,0.3) (sum.io) -- +(0,-0.3)
   (cosum.left out) .. controls +(240:0.5) and +(120:0.5) .. (sum.left in)
   (cosum.right out) .. controls +(300:0.15) and +(90:0.15) .. (times.90)
   (times.io) .. controls +(270:0.15) and +(60:0.15) .. (sum.right in);
  
   \end{tikzpicture}
   \end{center}
More generally, whenever \(k\) is a commutative ring, the presence of \(-1 \in k\) guarantees that
a bimonoid over \(k\) is automatically a Hopf monoid over \(k\).  So, when \(k\) is a commutative
ring, Wadsley and Woods' result implies that \(\Mat(k)\) is the PROP for Hopf monoids over \(k\).  

Earlier, Bonchi, Soboci\'nski and Zanasi gave an elegant and very different proof that \(\Mat(R)\)
is the PROP for Hopf monoids over \(R\) when \(R\) is a principal ideal domain
\cite[Prop.\ 3.7]{BSZ1}.  The advantage of their argument is that they build up the PROP for Hopf
monoids over \(R\) from smaller pieces, using some ideas developed by \cite{Lack}.  

These authors also described a PROP that is equivalent to \(\Relk\) as a symmetric monoidal
category whenever \(k\) is a field.  In this PROP, which they call \(\SV_k\), a morphism \(f \maps
m \to n\) is a linear relation from \(k^m\) to \(k^n\).  They proved that \(\SV_k\) is a pushout in
the category of PROPs and strict symmetric monoidal functors:
\[ 
\xymatrix{ \Mat(R) + \Mat(R)^{\mathrm{op}} \ar[dd] \ar[rr] && \mathrm{Span}(\Mat(R)) \ar[dd] \\  \\
\mathrm{Cospan}(\Mat(R))  \ar[rr] && \SV_k
 } 
\] 

This pushout square requires a bit of explanation.
Here \(R\) is any principal ideal domain whose field of fractions is \(k\).  For example, we 
could take \(R = k\), though Bonchi, Soboci\'nski and Zanasi  are more interested in the example
where  \(R = \R[s]\) and \(k = \R(s)\).  A morphism in \(\mathrm{Span}(\Mat(R))\) is an isomorphism 
class of spans in \(\Mat(R)\).  There is a covariant functor 
\[      \begin{array}{ccc} \Mat(R) &\to& \mathrm{Span}(\Mat(R))   \\ 
                                    m \stackrel{f}{\to} n &\mapsto & m \stackrel{1}{\leftarrow} m \stackrel{f}{\to} n \end{array} \]
and also a contravariant functor
\[      \begin{array}{ccc} \Mat(R) &\to& \mathrm{Span}(\Mat(R))   \\ 
                                    m \stackrel{f}{\to} n &\mapsto & n \stackrel{f}{\leftarrow} m \stackrel{1}{\to} m. \end{array} \]
Putting these together we get the functor from \(\Mat(R) + \Mat(R)^{\mathrm{op}}\) to
\(\mathrm{Span}(\Mat(R))  \) that gives the top edge of the square.  Similarly, a morphism in
\(\mathrm{Cospan}(\Mat(R))\) is an isomorphism class of cospans in \(\Mat(R)\), and we have both a
covariant functor 
\[      \begin{array}{ccc} \Mat(R) &\to& \mathrm{Cospan}(\Mat(R))   \\
                                     m \stackrel{f}{\to} n &\mapsto & m \stackrel{f}{\rightarrow} n \stackrel{1}{\leftarrow} n \end{array} \]
and a contravariant functor
\[      \begin{array}{ccc} \Mat(R) &\to& \mathrm{Cospan}(\Mat(R)) \\ 
                                    m \stackrel{f}{\to} n &\mapsto & n \stackrel{1}{\rightarrow} n \stackrel{f}{\leftarrow} m. \end{array} \]
Putting these together we get the functor from \( \Mat(R) + \Mat(R)^{\mathrm{op}} \) to
\( \mathrm{Cospan}(\Mat(R)) \) that gives the left edge of the square. 

Bonchi, Soboci\'nski and Zanasi analyze this pushout square in detail, giving explicit
presentations for each of the PROPs involved, all based on their presentation of \(\Mat(R)\).  The
upshot is a presentation of \(\SV_k\) which is very similar to our presentation of the equivalent
symmetric monoidal category \(\Relk\).   Their methods allow them to avoid many, though not all, of
the lengthy arguments that involve putting morphisms in `normal form'.


\subsection*{Acknowledgements}

We thank Jamie Vicary for pointing out the relevance of the ZX calculus when the first author gave
a talk on this material at Oxford in February 2014 \cite{B}.  Discussions with Brendan Fong have
also been very useful.  

\bibliographystyle{plain}


\begin{thebibliography}{10}

\bibitem{AC} Samson Abramsky and Bob Coecke,  A categorical semantics of quantum protocols, \textsl{Proceedings of the 19th IEEE Conference on Logic in Computer Science (LiCS'04)}, IEEE Computer Science Press, 2004, pp.\ 415--425.  Also available as \href{http://arxiv.org/abs/quant-ph/0402130}{arXiv:quant-ph/0402130}.

\bibitem{B} John Baez, Network theory I: electrical circuits and signal-flow graphs, lecture at the Department of Computer Science, University of Oxford, February 25, 2014.  Slides and video available at  \href{http://math.ucr.edu/home/baez/networks_oxford/}{http://math.ucr.edu/home/baez/networks\(\underline{\;}\)oxford/}.

\bibitem{BS} John Baez and Mike Stay, Physics, topology, logic and computation: a Rosetta Stone, in \textsl{New Structures for Physics}, ed.\ Bob Coecke, \textsl{Lecture Notes in Physics} vol.\ 813, Springer, Berlin, 2011, pp.\ 95--172.  Also available as \href{http://arxiv.org/abs/0903.0340}{arXiv:0903.0340}.

\bibitem{BSZ1} Filippo Bonchi, Pawe\l~Soboci\'nski and Fabio Zanasi, Interacting Hopf algebras.  Available as \href{http://arxiv.org/abs/1403.7048}{arXiv:1403.7048}.

\bibitem{BSZ2} Filippo Bonchi, Pawe\l~Soboci\'nski and Fabio Zanasi, A categorical semantics of signal flow graphs, in \textsl{CONCUR 2014--Concurrency Theory}, eds.\ P.\ Baldan and D.\ Gorla, \textsl{Lecture Notes in Computer Science} vol.\ 8704, Springer, Berlin, 2014, pp.\ 435--450.  Also available at \href{http://users.ecs.soton.ac.uk/ps/papers/sfg.pdf}{http://users.ecs.soton.ac.uk/ps/papers/sfg.pdf}.

\bibitem{CW}  Aurelio Carboni and Robert F.\ C.\ Walters, Cartesian bicategories I, \textsl{J.\ Pure Appl.\ Alg.\ }{\bf 49} (1987), 11--32.

\bibitem{CD} Bob Coecke and Ross Duncan, Interacting quantum observables: categorical algebra and diagrammatics, \textsl{New J.\ Phys.\ }{\bf 13} (2011), 043016.  Also available as \href{http://arxiv.org/abs/0906.4725}{arXiv:0906.4725}.

\bibitem{CP} Bob Coecke and Eric Oliver Paquette, Categories for the practising physicist, in \textsl{New Structures for Physics}, ed.\ Bob Coecke, \textsl{Lecture Notes in Physics} vol.\ 813, Springer, Berlin, 2011, pp.\ 173--286.  Also available as \href{http://arxiv.org/abs/0905.3010}{arXiv:0905.3010}.

\bibitem{CPV} Bob Coecke, Dusko Pavlovic and Jamie Vicary, A new description of orthogonal bases, \textsl{Math.\ Str.\ Comp.\ Sci.\ }{\bf 23} (2013), 555--567.  Also available as \href{http://arxiv.org/abs/0810.0812}{arXiv:0810.0812}.

\bibitem{Friedland} Bernard Friedland, \textsl{Control System Design: An Introduction to State-Space Methods}, Courier Dover Publications, 2012.

\bibitem{JS1} Andr\'e Joyal and Ross Street, The geometry of tensor calculus I, \textsl{Adv.\ Math.\ }{\bf 88} (1991), 55-113.

\bibitem{JS2} Andr\'e Joyal and Ross Street, The geometry of tensor calculus II.  Draft available at \href{http://maths.mq.edu.au/~street/GTCII.pdf}{http://maths.mq.edu.au/\(\sim\)street/GTCII.pdf}.

\bibitem{Kock} Joachim Kock, \textsl{Frobenius Algebras and 2D Topological Quantum Field Theories}, Cambridge U.\ Press, Cambridge, 2003.  Short version available at \href{http://mat.uab.es/~kock/TQFT/FS.pdf}{http://mat.uab.es/\(\sim\)kock/TQFT/FS.pdf}.

\bibitem{Kock2} Joachim Kock, Remarks on the origin of the Frobenius equation, available at \href{http://mat.uab.es/~kock/TQFT.html\#history}{http://mat.uab.es/\(\sim\)kock/TQFT.html\#history}.

\bibitem{Lack} Stephen Lack, Composing PROPs, \textsl{Th.\ Appl.\ Cat.\ }{\bf 13} (2004), 
147--163.  Available at \href{http://www.tac.mta.ca/tac/volumes/13/9/13-09abs.html}{http://www.tac.mta.ca/tac/volumes/13/9/13-09abs.html}.

\bibitem{Lafont} Yves Lafont, Towards an algebraic theory of Boolean circuits, \textsl{J.\ Pure Appl.\ Alg.\ }{\bf 184} (2003), 257--310.  Also available at \href{http://iml.univ-mrs.fr/~lafont/pub/circuits.pdf}{http://iml.univ-mrs.fr/\(\sim\)lafont/pub/circuits.pdf}.

\bibitem{MacLane} Saunders Mac Lane, Natural associativity and commutativity, \textsl{Rice U.\ Studies} \textbf{49} (1963), 28---46.

\bibitem{Mimram} Samuel Mimram, Presentation of a game semantics for
first-order propositional logic.  Available as \href{http://arxiv.org/abs/0805.0845}{arXiv:0805.0845}.

\bibitem{RSW} Robert Rosebrugh, Nicoletta Sabadini and Robert F.\ C.\ Walters, Generic commutative separable algebras and cospans of graphs, \textsl{Th.\ Appl.\ Cat.\ }\textbf{15} (2005), 264--277.  Available at \href{http://www.tac.mta.ca/tac/volumes/15/6/15-06abs.html}{http://www.tac.mta.ca/tac/volumes/15/6/15-06abs.html}.

\bibitem{Selinger} Peter Selinger,  Dagger compact closed categories and completely positive maps, \textsl{Elec.\ Notes Theor.\ Comp.\ Sci.\ }{\bf 170} (2007), 139--163.

\bibitem{Vicary} Jamie Vicary, Categorical formulations of finite-dimensional quantum algebras, \textsl{Comm.\ Math.\ Phys.\ } {\bf 304} (2011), 765--796.  Also available as \href{http://arxiv.org/abs/0805.0432}{arXiv:0805.0432}.

\bibitem{WW} Simon Wadsley and Nick Woods, PROPs for linear systems.  Available
as \href{http://arxiv.org/abs/1505.00048}{arXiv:1505.00048}.    

\end{thebibliography}
\end{document}